\DeclareMathOperator{\Tr}{Tr}
\DeclareMathOperator{\GenSet}{\mathcal{F}}
\DeclareMathOperator{\sign}{\text{sgn}}
\newtheorem{theorem}{Theorem}
\newtheorem{lemma}{Lemma}
\newtheorem{example}{Example}
\newtheorem{remark}{Remark}
\title{SDP SAT approaches}
\author{Lennart Sinjorgo}
\date{\vspace{-5ex}}
\theoremstyle{definition}
\newcommand{\@citeX}[3]{{
  \@ifundefined{b@#2}{\mbox{\reset@font\bfseries ?}
    \G@refundefinedtrue
    \@latex@warning
      {Citation `#2' on page \thepage \space undefined}}
  {#1\csname b@#2\endcsname#3}}}
\newcommand{\citeA}[1]{\@citeX{[Ref.~}{#1}{]}}
\newcommand{\citeB}[1]{\@citeX{Reference~}{#1}{}}
\newcommand{\Logical}{\phi}
\newcommand{\SOSms}{SOS-MS}
\newcommand{\gitHubLink}{\url{https://github.com/LMSinjorgo/SOS-SDP_MAXSAT}}
\newcommand{\true}{\texttt{true}}
\newcommand{\false}{\texttt{false}}
\DeclareMathOperator*{\argmin}{argmin}
\newcommand{\heurMaxSolve}{CCLS}
\begin{document}

\title{On solving the MAX-SAT using sum of squares}
\author{Lennart Sinjorgo \thanks{Tilburg University, Department of Econometrics and OR, CentER,  The Netherlands, {\tt l.m.sinjorgo@tilburguniversity.edu}}
	\and {Renata Sotirov}  \thanks{Tilburg University, Department of Econometrics and OR, CentER,  The Netherlands, {\tt r.sotirov@tilburguniversity.edu}}}

\maketitle

\begin{abstract}
We consider semidefinite programming (SDP) approaches for solving  the 
maximum satisfiability problem (MAX-SAT) and the weighted partial MAX-SAT. It is widely known that SDP is well-suited to approximate the (MAX-)2-SAT.
Our work shows the potential of SDP also for other satisfiability problems, by being competitive with some of the best solvers in the yearly MAX-SAT competition. 
Our solver combines  sum of squares (SOS) based SDP  bounds and  an efficient parser within
a branch \&  bound scheme.
 
On the theoretical side, we  propose a family of semidefinite feasibility problems, and show that a member of this family provides the rank two guarantee. We also provide a parametric family of semidefinite relaxations for the MAX-SAT, and derive several properties of monomial bases used in the SOS approach. We connect two well-known SDP approaches for the (MAX)-SAT, in an elegant way. Moreover, we relate our SOS-SDP relaxations for the partial MAX-SAT  to the known SAT relaxations.

\end{abstract}

{\bf Keywords}   SAT, MAX-SAT, weighted partial MAX-SAT, 
 semidefinite programming, sum of squares, Peaceman-Rachford splitting method\\

{\bf AMS subject classifications.}  90C09, 90C22, 90C23.

\section{Introduction}

In this paper, we investigate semidefinite programming (SDP) approaches for the satisfiability problem, (SAT),   maximum satisfiability problem (MAX-SAT) and their variants. Given a logical proposition, built from a conjunction of clauses, the SAT asks whether there exists a truth assignment to the variables such that all clauses are satisfied. 
The optimization variant of SAT, known as the MAX-SAT, is to determine a truth assignment which satisfies the largest number of clauses. 

The SAT is a central problem in mathematical logic and computer science and finds various applications, including software or hardware verification \cite{marques2000boolean} and planning in artificial intelligence \cite{kautz1999unifying}. \citeauthor{cook1971complexity}~\cite{cook1971complexity} proved that SAT  is  $\mathcal{NP}$-complete in~\citeyear{cook1971complexity}.  The SAT was the first problem proven to be $\mathcal{NP}$-complete, which implies that any problem contained in the complexity class $\mathcal{NP}$ can be efficiently recast as a SAT instance. Thus, algorithms for the SAT can also solve a wide variety of other problems, such as timetabling \cite{asin2014curriculum, gattermann2016integrating} and product line engineering \cite{mendonca2009sat}. 

SDP approaches to the SAT  are  proposed first by \citeauthor{de2000relaxations}~\cite{de2000relaxations}, and later extended by \citeauthor{anjos2004proofs}~\cite{anjos2004proofs}--\cite{anjos2008extended}.
\citeauthor{goemans1995improved}~\cite{goemans1995improved} were  first to apply SDP to the MAX-SAT. 
They showed that for a specific class of MAX-SAT instances, known as MAX-2-SAT (in the MAX-$k$-SAT, each clause is a disjunction of at most $k$ variables), the MAX-SAT is equivalent to optimizing a multivariate quadratic polynomial, which is naturally well suited for semidefinite relaxations. 
In the same paper, \citeauthor{goemans1995improved}
proposed a 0.878-approximation algorithm for the MAX-2-SAT based on SDP. This result was later improved to 0.940 in \cite{lewin2002improved}. Further, \citeauthor{karloff19977}~\cite{karloff19977} obtained an optimal $7/8$ approximation algorithm for  the MAX-3-SAT, and \citeauthor{halperin2001approximation}~\cite{halperin2001approximation} a nearly optimal approximation algorithm for the MAX-4-SAT. 
In \cite{van2008sums},  \citeauthor{van2008sums}~exploit sum of squares (SOS) optimization  to compute bounds for the MAX-SAT.

Despite the great success in designing approximation algorithms using SDP, most modern MAX-SAT solvers do not exploit SDP. A possible reason for this is the fact that medium to large size SDP problems are computationally challenging to solve. Interior point methods, the conventional approach for solving SDPs, struggle from large memory requirements and prohibitive computation time per iteration already for medium size SDPs. Recently, first order methods such as the alternating direction method of multipliers (ADMM) \cite{Boyd,Gaby} and the Peaceman-Rachford splitting method (PRSM)~\cite{peaceman1955numerical} showed a great success in solving SDPs, see e.g.,~\cite{de2021sdp,graham2022restricted,oliveira2018admm}. 
Motivated by those results, we design a MAX-SAT solver that  incorporates SDP bounds and  the PRSM within a branch \& bound (B\&B) scheme.\\

In particular, we further exploit the SOS approach from~\cite{van2008sums} to derive SOS-based SDP relaxations that provide strong upper bounds to the optimal MAX-SAT solution. The derived  SDP relaxations are  strengthened SDP duals of the \citeauthor{goemans1995improved} MAX-SAT relaxation.
The strength of the upper bounds and the required time to compute the  relaxations depend on the chosen monomial basis. 
We experiment with different monomial bases and
propose a class of bases that provide good trade-offs between these effects. Moreover, we  derive several properties of monomial bases that are exploited in the design of our  solver.
We extend the SOS approach to the weighted partial MAX-SAT, a variant of the MAX-SAT in which clauses are divided in soft and hard clauses. Here, the goal is to maximize the weighted sum of  soft clauses, while satisfying all the hard clauses. We strengthen SDP bounds for the weighted partial MAX-SAT using the SAT resolution rule. To the best of our knowledge, we are the first to exploit SDP for solving the weighted partial MAX-SAT.

We show that the Peaceman-Rachford splitting method is well suited for exploiting the structure of the SOS-based SDP relaxations. Therefore, we implement the PRSM to (approximately) solve  large-scale SDP relaxations and obtain  upper bounds for the (weighted partial) MAX-SAT. 
The resulting algorithm is very efficient, e.g., it can compute upper bounds  with matrix variables of order 1800 in less than 2 minutes, and for matrices of order 2400 in less than 4 minutes. Our numerical results show that the upper  bounds are strong, in particular when larger monomial bases is used. 
We also exploit the output of the PRSM to efficiently compute lower bounds for the   MAX-SAT. 

We design an SOS-SDP based MAX-SAT solver  (named \SOSms{}) that exploits SOS-based SDP relaxations and the PRSM.  \SOSms{}  is one of the first SDP-based MAX-SAT solvers. The only alternative SDP-based solver is the MIXSAT algorithm~\cite{wang2019low} that is designed to solve  MAX-2-SAT instances.
 \SOSms{} is able to solve (weighted partial) MAX-$k$-SAT instances, for $k\leq 3$.
To solve a MAX-SAT instance,  \SOSms{} has to approximately 
solve multiple SDP subproblems. A crucial component of  \SOSms{} is therefore its ability to quickly construct the programme parameters of the required SDPs, i.e., the process of \textit{parsing}. We design an efficient parsing method, which is also applicable to other problems and publicly available.
Another efficient feature of our solver is warm starts.
Namely,  our solver uses the approximate PRSM solution  at a node, as warm starts for the corresponding children’s node.
We are able to solve a variety of MAX-SAT instances in a reasonable time, while solving some instances faster than the best solvers in the  Eleventh Evaluation of Max-SAT Solvers (MSE-2016). Moreover, we solve three previously unsolved MAX-3-SAT instances from  the MSE-2016. We are first to use SDP for solving weighted partial MAX-SAT.
Our results provide new perspectives on solving the MAX-SAT, and all its variants, by using SDPs.

Our paper provides  various theoretical results. 
We propose a family of semidefinite feasibility problems, and show that one member of this family provides the rank two guarantee. That is, whenever the semidefinite relaxation admits a feasible matrix of rank two or less, the underlying SAT instance is satisfiable. This result relates to a similar  rank two guarantee result  by \citeauthor{anjos2004semidefinite}~\cite{anjos2004semidefinite}. 
The rank value can be seen as a measure of the strength of the relaxation.
We also provide a parametric family of semidefinite relaxations for the (weighted partial) MAX-SAT. This parameter can be finely tuned to determine the strength of the relaxation, and any such relaxation can easily be incorporated within \SOSms{}. This allows the solver to be adapted per (class of) problem instances.

Further, we show how the SOS approach to the MAX-SAT of~ \citeauthor{van2008sums}~\cite{van2008sums} and here generalized moment relaxations of the SAT due to 
 \citeauthor{anjos2004proofs} \cite{anjos2004proofs}-\cite{anjos2006semidefinite} are related.  This is done by exploiting the duality theory of the moment  and SOS  approaches.  Our result generalizes a result by \citeauthor{van2008sums}~\cite{van2008sums}, who showed  the connection between the two approaches only for restricted cases. By exploiting duality theory, we also relate the SOS relaxations for the partial MAX-SAT  problem  to the SAT relaxations from~\cite{anjos2004semidefinite}.

Lastly, we investigate MAX-SAT resolution, a powerful technique used by many MAX-SAT solvers~\cite{abrame2014ahmaxsat}, in relation to the SDP approach to the MAX-SAT. Standard MAX-SAT solvers use resolution to determine upper bounds on the MAX-SAT solution, while \SOSms{} determines upper bounds through SDP.  
We show how resolution is related to the monomial basis. We also show how  the SAT resolution  can be exploited for the weighted partial MAX-SAT. 
\\

This paper is organized as follows. We provide notation and preliminaries in \Cref{section_Notation} and assumptions in \Cref{subsection_Assumptions}. \Cref{section_maxSatFormulations} provides an overview of the \citeauthor{goemans1995improved} approach \cite{goemans1995improved} to the MAX-SAT. \Cref{section_SATasSDPfeas} first outlines previous SDP approaches to the SAT, and then generalizes them.
\Cref{section_SOSandMaxSat} provides the details of the SOS theory, applied to the MAX-SAT. We also derive various properties of monomial bases in that section. \Cref{section_Resolution} concerns the combination of MAX-SAT resolution and SOS. In \Cref{section_dualRelation}, we show how two SDP approaches to (MAX-)SAT, i.e., \cite{anjos2004semidefinite} and \cite{van2008sums},  are connected. \Cref{section_PRSMsection} introduces the PRSM for SOS. We extend the SOS approach to the weighted partial MAX-SAT and connect the resulting programme to the relaxations in \cite{anjos2004semidefinite}, in \Cref{section_wpMS}. \Cref{section_algorithmDescr} provides an overview and pseudocode of our solver \SOSms{}. \Cref{section_NumericalResults} presents numerical results that include SOS-SDP bounds and performance of \SOSms{}. Concluding remarks are given in~\Cref{section_Conclusion}.

\subsection{Preliminaries and notation}
\label{section_Notation}
For any $n \in \mathbb{N}$, we write $[n] = \{1, \ldots, n \}$. We denote by $\Logical$ a propositional formula, in variables $x_1$ up to $x_n$, and assume that $\Logical$ is in \textit{conjunctive normal form} (CNF). That is, $\Logical$ is given by a conjunction of $m$ clauses,
\begin{align}
    \label{eqn_definitionCNF}
    \Logical = \bigwedge_{j = 1}^m C_j.
\end{align}
We will mostly use $n$ to refer to the number of variables and $m$ to refer to the number of clauses. Each clause $C_j$ is a disjunction of (possibly negated) variables. We define each clause $C_j$ as a subset of $[n]$, indicating the variables appearing in $C_j$. Moreover, we define $I^+_j \subseteq C_j$ as the set of unnegated variables appearing in $C_j$. Similarly, $I^-_j \subseteq C_j$ is defined as the set of negated variables appearing in $C_j$. Thus, the clause associated with $C_j$ reads
\begin{align}
    \label{eqn_IplusDef}
    \bigvee_{i \in I^+_j} x_i \vee \bigvee_{i \in I^-_j}  \neg x_i.
\end{align}
We refer to both $x_i$ and $\neg x_i$ as literals. For example, the literal $\neg x_i$ is true if $x_i$ is \texttt{false}. We denote the length of a clause by $\ell_j$, thus $\ell_j := | C_j |$. We say that $\Logical$ constitutes a (MAX-)$k$-SAT instance if $\max_{j \in [m]} \ell_j = k$.

The SAT is to decide, given $\Logical$, whether a satisfying truth assignment to the variables $x_i$, $i \in [n]$ exists. The MAX-SAT is to find an assignment which satisfies the largest number of clauses.

We associate to each clause a vector $a_j \in \{0, \pm 1\}^n$, having entries $a_{j,i}$ according to 
\begin{align}
    \label{def_ajVector}
    a_{j,i} = \begin{cases}
    -1,  & \text{ if } i \in {I^-_j}, \\
    0,  & \text{ if } i \notin I^+_j \cup I^-_j, \\
    1, & \text{ if } i \in I^+_j.
    \end{cases}
\end{align}
We write $\mathcal{S}^n$ for the set of symmetric $n \times n$ matrices and $\mathcal{S}^n_+$ for the cone of symmetric positive semidefinite matrices of size $n \times n$. If the context is clear, the superscript $n$ will be omitted. We denote by $\mathbf{1}_n \in \mathbb{R}^n$ and $\mathbf{0}_n \in \mathbb{R}^n$, vectors of all ones and zeroes, respectively (subscripts are omitted when the context is clear). The identity matrix $I_n \in \mathcal{S}^n$ has as columns the unit vectors $e_i$, $i \in [n]$. Matrix $\mathbf{0}_{m \times n}$ denotes the zero matrix of $m$ rows and $n$ columns.

For $X,Y \in \mathcal{S}$, we define the trace inner product as $\langle X,Y \rangle :=  \Tr(XY)$. We write $X \succeq Y$ if and only if $X - Y \in \mathcal{S}_+$. The Frobenius norm of a symmetric matrix $X$ is given by
\begin{align}
    \label{eqn_FrobeniusNorm}
    \| X \|_F = \sqrt{\langle X, X \rangle}.
\end{align}
For $X \in \mathcal{S}$ and $\mathcal{M} \subseteq \mathcal{S}$, we set
\begin{align}
    \label{eqn_ProjectionOperatorDef}
    \mathcal{P}_{\mathcal{M}}(X) := \argmin_{Z \in \mathcal{M} } \| Z-X \|_F,
\end{align}
as the projection of $X$ onto $\mathcal{M}$.

For $X \in \mathbb{R}^{n  \times n}$, diag$(X) \in \mathbb{R}^n$ denotes the vector equal to the diagonal of $X$. For $x \in \mathbb{R}^n$, we write $\| x \| := \sqrt{x^\top x}$ and
\begin{align}
    \label{eqn_xAlphaDef}
    x^\alpha := \prod_{i \in \alpha} x_i,
\end{align}
for some $\alpha \subseteq [n]$. We evaluate the empty product as 1. Note that \eqref{eqn_xAlphaDef} differs from the more common notation in SOS literature, where the $\alpha$ are considered as vectors in $\mathbb{N}^n$, see e.g., \cite{lasserre2007sum}.

\subsection{Assumptions}
\label{subsection_Assumptions}
In the rest of this work, we assume that all logical propositions $\Logical$, on $n$ variables and $m$ clauses, satisfy the following three properties:
\begin{enumerate}
    \item $I^+_j  \cap I^-_j = \emptyset$, $\forall j \in [m]$,
    \item $|C_j| \geq 2$, $\forall j \in [m]$,
    \item Each variable is contained it at least 2 clauses,
\end{enumerate}
along with $\Logical$ being in CNF. We explain now that properties 1 and 3 can be assumed without loss of generality. Property 1 states that a clause cannot contain both the unnegated and negated variants of a variable. If that were not the case, the clause is trivially satisfied and can be removed from $\Logical$ without loss of generality. For property 3, if we have that a variable occurs in exactly one clause, say $C_j$, we can set that variable to the truth value such that $C_j$ is satisfied and remove $C_j$ from $\Logical$. 

Property 2 can be assumed for SAT instances. If a SAT instance contains a clause $C_j$ with $|C_j| = 1$ (such a clause is known as a \textit{unit clause}), the literal in $C_j$ must be satisfied. The variable corresponding to this literal can thus be given an appropriate truth value and $\Logical$ can be reduced (such a reduction of $\Logical$ is referred to as \textit{unit resolution}). For MAX-SAT instances, it is possible that an optimal truth assignment might leave unit clauses unsatisfied. We note however, that the MAX-SAT benchmark instances we consider satisfy all the above assumptions.

\section{The MAX-SAT formulation and relaxation}
\label{section_maxSatFormulations}

We outline the approach of Goemans-Williamson for formulating the MAX-SAT as a polynomial optimization problem. We also present their SDP relaxation for the MAX-2-SAT.

 Let $x_1, \, x_2, \, \ldots, x_n$ be the variables of the MAX-SAT instance, given by a logical proposition $\Logical$. We thus assume that $\Logical$ is given in conjunctive normal form, see \eqref{eqn_definitionCNF}, and contains $m$ clauses. As customary in the SDP SAT literature, we associate $+1$ with \texttt{true} and $-1$ with \texttt{false}. An assignment of the $x_i$ values in $\{ \pm 1 \}$ is referred to as a truth assignment. As proposed by \citeauthor{goemans1995improved} \cite{goemans1995improved}, we define a truth function $v: \{ \pm 1 \}^n \to \{0, 1 \}$, such that, given a logical proposition $\Logical^\prime$, evaluated for some truth assignment, $v( \Logical^\prime) = 1$ if and only if $\Logical^\prime$ is satisfied, and 0 otherwise. This property uniquely determines $v$, i.e.,
\begin{align}
    \label{eqn_truthFunction}
    v(x_i) = \frac{1 + x_i}{2} \text{ and } v(\neg x_i) = \frac{1 - x_i}{2}.
\end{align}
And in general, for a clause $C_j \subseteq [n]$ of length $\ell_j$, we have
\begin{align}
\label{eqn_valueProp}
v(C_j) := 1- \prod_{i \in I^+_j} v\left(\neg{x}_{i}\right) \prod_{i \in I^-_j} v\left({x}_{j}\right) = 1 - 2^{-\ell_j} \Big(   \sum_{\gamma \subseteq C_j} (-1)^{|\gamma|} a_j^\gamma  x^\gamma \Big),
\end{align}
for $a_j$ as in \eqref{def_ajVector}, and both $a_j^\gamma$ and $x^\gamma$ as in \eqref{eqn_xAlphaDef}. The last equality in \eqref{eqn_valueProp} follows from the product expansion of $v(C_j)$, as shown in Proposition 1 in \cite{anjos2006semidefinite}. In \cite{goemans1995improved}, an extra variable $x_0 \in \{ \pm 1\}$ is defined, with the purpose of deciding the truth value, that is, $\phi^\prime$ is true if and only if $v(\phi^\prime) = x_0$. We set $x_0 = 1$ without loss of generality for sake of clarity. The \mbox{MAX-SAT}, given by $\Logical$, is to  maximize the following polynomial:
\begin{align}
\label{eqn_vLogicalDef}
v_\Logical := \sum_{j \in [m]}  v(C_j) = \sum_{\alpha \subseteq [n]} v^\alpha_\Logical x^\alpha,
\end{align}
subject to $x_i\in \{ \pm 1\}$ for all $i$, and for appropriate $v^\alpha_\Logical \in \mathbb{R}$, and $x^\alpha$ as in \eqref{eqn_xAlphaDef}. Observe that $v_\Logical$ is a $k$th degree polynomial if $\Logical$ represents a MAX-$k$-SAT instance. A MAX-2-SAT instance thus corresponds to a quadratic polynomial, and is therefore well suited for approximated by SDP. We return to $v_\Logical$ in \Cref{section_dualRelation}. 

Assuming now that $\Logical$ represents a MAX-2-SAT instance on $n$ variables, the corresponding MAX-2-SAT can be formulated as
\begin{equation}
\label{eqn_MaxSatFormulation}
\begin{aligned}
\max \quad &  \langle W, X \rangle \\
\textrm{s.t.} \quad & 
 \text{diag}(X) = \mathbf{1}, \\
& X\succeq 0, \,
X \in \{\pm 1 \}^{(n+1) \times (n+1)},
\end{aligned}
\end{equation}
where $\langle W, X \rangle$ describes the quadratic polynomial $v_\Logical$. Observe that the constraints of \eqref{eqn_MaxSatFormulation} enforce $X$ to satisfy $X = xx^\top$, for some $x \in \{ \pm 1 \}^{n+1}$. The size of this vector $x$ is one more than the number of variables $n$, to account for the additional truth value variable $x_0$.

A semidefinite relaxation of \eqref{eqn_MaxSatFormulation} is obtained by omitting the integrality constraint, or equivalently nonconvex rank one constraint. This constitutes the well-known Goemans-Williamson \cite{goemans1995improved} SDP relaxation of the MAX-2-SAT. That is,
\begin{equation}
\begin{aligned}
\label{eqn_GoemansWilliamson}
\max \quad &  \langle W, X \rangle \\
\textrm{s.t.} \quad & \text{diag}(X) = \mathbf{1}, \\
& X \in \mathcal{S}^{n+1}_+.
\end{aligned}
\end{equation}
\citeauthor{goemans1995improved} showed that the optimal matrix for \eqref{eqn_GoemansWilliamson} can be used to obtain a 0.878-approximation algorithm to the MAX-2-SAT. Assuming $\mathcal{P} \neq \mathcal{NP}$, for any $\varepsilon > 0$ there exists no $(\frac{21}{22}+\varepsilon)$ approximation algorithm to the MAX-2-SAT \cite{haastad2001some}.
\citeauthor{karloff19977}~\cite{karloff19977} introduce a canonical way of obtaining SDP relaxations for any MAX-SAT, that is exploited to obtain approximation algorithms to the MAX-3-SAT and the MAX-4-SAT in \cite{karloff19977} and \cite{halperin2001approximation}, respectively. To solve MAX-2-SAT instances, rather than approximate, \citeauthor{wang2019low} \cite{wang2019low} propose the MIXSAT algorithm, which combines \eqref{eqn_GoemansWilliamson} with a B\&B scheme.

\section{The SAT as semidefinite feasibility problem}
\label{section_SATasSDPfeas}
In this section we first present a brief overview of the work done by \citeauthor{de2000relaxations} \cite{de2000relaxations} and \citeauthor{anjos2004proofs} \cite{anjos2004proofs, anjos2004semidefinite, anjos2005improved, anjos2006semidefinite, anjos2008extended}. Their relaxations of the SAT involve semidefinite feasibility problems: infeasibility of the semidefinite programme implies unsatisfiability of the corresponding SAT instance. The difference between relaxations is the size of the SDP variable, and the method of encoding the structure of the SAT instance in the SDP relaxation. Then, we propose a family of semidefinite feasibility problems, that contains relaxations from \cite{anjos2004proofs}--\cite{anjos2008extended} and \cite{de2000relaxations} as special cases, and show that a particular member of the family  provides a rank-two guarantee, see \Cref{thm_NewFsetRank2Guarantee}.

We reconsider first programme \eqref{eqn_GoemansWilliamson}, which attempts to satisfy the maximum number of clauses  through its objective function. For the SAT specifically, one might move the clause satisfaction part from the objective to the feasible set of a semidefinite programme. This idea was first proposed by \citeauthor{de2000relaxations} \cite{de2000relaxations} in \citeyear{de2000relaxations}, and was later extended by \citeauthor{anjos2004proofs} \cite{anjos2004proofs}. To be precise: \citeauthor{de2000relaxations} propose the so called \ref{GapRelax} relaxation, or \ref{GapRelax} for short, which is a semidefinite feasibility problem, given by
\begin{equation}
\label{GapRelax}
\tag{GAP}
\begin{aligned}
\text{find} \quad &  Y \in \mathcal{S}^n_+, \, y \in \mathbb{R}^n \\
\textrm{s.t.} \quad & a_j^\top  Y a_j - 2 a_j^\top y \leq \ell_j(\ell_j-2), \, \forall j \in [m], \\
  & \text{diag}(Y) = \mathbf{1},    \\
  & Y \succeq yy^\top,
\end{aligned}
\end{equation}
for $a_j$ as in \eqref{def_ajVector}. It is noted in \cite{de2000relaxations}, that for $\ell_j \leq 2$, the corresponding inequalities in \ref{GapRelax} relaxation may be changed to equalities. The \ref{GapRelax} relaxation is suited for instances that contain a clause of length two. If $\ell_j \geq 3, \, \forall j \in [m]$, then $(Y,y) = (I, \mathbf{0})$ is always feasible for \ref{GapRelax}, whether the underlying SAT instance is satisfiable or not.

We now state the SDP relaxations of the SAT by \citeauthor{anjos2004semidefinite} \cite{anjos2004proofs}-\cite{anjos2008extended} that are not restricted to the lengths of the clauses in instances. Let $\Logical$ be a proposition on $n$ variables and $m$ clauses and $x \in \{ \pm 1 \}$ the truth assignment to the variables. Consider a family of subsets $\GenSet = \{ \alpha_1,  \ldots, \alpha_s \}$, let $\mathbf{x} = (x^{\alpha_1}, \, \ldots, x^{\alpha_{s}})^\top$, and define $Y := \mathbf{x} \mathbf{x}^\top$. It is clear that $\text{rank}(Y) = 1$, diag$(Y) = \mathbf{1}$, and $Y \succeq 0$. Later, to obtain a semidefinite relaxation of the SAT, we omit the rank one constraint.

We index the matrix $Y$ with the elements of $\GenSet$, and define for all subsets $\gamma$ contained in some clause of $\Logical$, the expression
\begin{align}
    \label{eqn_yTdef}
    Y(\gamma) := Y_{\alpha, \beta}, \text{ for some } \alpha, \, \beta \in \GenSet \text{ jointly contained in a single clause, such that } \alpha \triangle \beta = \gamma,
\end{align}
where $\triangle$ is the symmetric difference operator, which is induced by the fact that, for $x \in \{ \pm 1 \}^n$, we have $Y_{\alpha, \beta} = x^\alpha x^\beta = x^{\alpha \triangle \beta} = x^\gamma$, see \eqref{eqn_xAlphaDef}. In general, $Y(\emptyset)$ refers to a diagonal entry of $Y$, hence, $Y(\emptyset) = 1$. We may have $Y(\gamma) = Y_{\emptyset, \gamma}$, and we assume that, for all $\gamma$ contained in a clause of $\Logical$, we can always find $\alpha$ and $\beta$ as in \eqref{eqn_yTdef}.

The expression $Y(\gamma)$ can refer to multiple entries of $Y$. By construction of $Y$, these entries are equal. Stated formally, we have $Y \in \cap_{j \in [m]} \Delta_j$, where
\begin{align}   
    \label{eqn_symmDiffPodd}
    \Delta_j := \{ Y \in \mathcal{S} \, | \, Y_{\alpha_1,\beta_1} = Y_{\alpha_2,\beta_2} \,\, \forall \, (\alpha_1, \alpha_2, \beta_1, \beta_2) \in \GenSet \text{ such that } \alpha_1 \triangle \beta_1 = \alpha_2 \triangle \beta_2 \subseteq C_j \}.
\end{align}
Observe that the sets $\Delta_j$ do not capture all equalities present in $Y$, due to the restriction $\alpha_1 \triangle \beta_1 = \alpha_2 \triangle \beta_2 \subseteq C_j$. In this section, we choose to include only the equalities captured by $\Delta_j$. This keeps our work in line with previous relaxations by \citeauthor{anjos2004semidefinite} \cite{anjos2004semidefinite}, and these equalities suffice to prove the main theorem in this section, see \Cref{thm_NewFsetRank2Guarantee}. In \Cref{section_dualRelation}, we consider an SDP relaxation of the SAT which considers all equalities present in $Y$.

If $x$ is a satisfying assignment to $\Logical$, then $v(C_j) = 1$, see \eqref{eqn_valueProp}, for all $j \in [m]$. We can rewrite this constraint in terms of $Y(\gamma)$, see \eqref{eqn_yTdef}. We now omit the rank one constraint on $Y$, to obtain the following semidefinite feasibility programme, denoted \ref{eqn_RFprogramme}:
\begin{equation}
\label{eqn_RFprogramme}
\tag{$R_{\GenSet}(\Logical)$}
\begin{aligned}
\text{find} \quad &  Y \in \mathcal{S}^{|\GenSet|}_+ \\
\textrm{s.t.} &  \sum_{\gamma \subseteq C_j} (-1)^{|\gamma|} \, a_j^\gamma  \, \, Y(\gamma)  = 0, \, \text{ for all } j \in [m], \\
  & \text{diag}(Y) = \mathbf{1}, \, Y \in \bigcap_{j \in [m]} \Delta_j.
\end{aligned}
\end{equation}
The programme \ref{eqn_RFprogramme} contains both the \ref{GapRelax} relaxation, and the relaxations proposed by \citeauthor{anjos2004proofs} \cite{anjos2004proofs} as special cases. Specifically, one obtains the \ref{GapRelax} relaxation from \ref{eqn_RFprogramme} by taking $\GenSet = \{ \alpha \subseteq [n] \, | \, |\alpha| \leq 1 \}$. It was proved in \cite{de2000relaxations} that whenever \ref{GapRelax} is feasible for a 2-SAT instance, the 2-SAT instance is satisfiable.

The \ref{GapRelax} relaxation can be considered as a semidefinite programme in the first level of the well-known Lasserre hierarchy \cite{lasserre2001global}. \citeauthor{anjos2004proofs} \cite{anjos2004proofs} proposed semidefinite relaxations of the SAT in approximately levels two and three of the Lasserre hierarchy, by only adding a subset of products of variables to the moment relaxation. For example, in \cite{anjos2004semidefinite}, \citeauthor{anjos2004semidefinite} proposed the $R_2$ relaxation, which can be obtained from \ref{eqn_RFprogramme} by taking
\begin{align}
    \label{eqn_AnjosFset}
    \GenSet = \left \{ \alpha \, | \, \alpha \subseteq C_j \text{ for some } j, \, | \alpha | \text{ odd, or } \alpha = \emptyset \right \}.
\end{align}
It was proved in \cite{anjos2004semidefinite} that the $R_2$ relaxation attains a rank two guarantee on 3-SAT instances: whenever the SDP programme admits a feasible matrix of rank two or lower, the corresponding SAT instance is satisfiable. We will now prove that, for a different $\GenSet$ than \eqref{eqn_AnjosFset}, the resulting relaxation \ref{eqn_RFprogramme} provides the same rank two guarantee.

\begin{theorem}
\label{thm_NewFsetRank2Guarantee}
Let $\Logical$ be a 3-SAT instance and
\begin{align}
    \label{eqn_FinThm}
    \GenSet = \left \{ \alpha \subseteq [n] \, | \, \alpha \subseteq C_j \textup{ for some } j, |\alpha| \neq 1 \right \}.
\end{align}
Let $Y$ be the matrix variable of \ref{eqn_RFprogramme}, indexed by elements of $\GenSet$. If \ref{eqn_RFprogramme} admits a feasible rank two matrix, then $\Logical$ is satisfiable.
\end{theorem}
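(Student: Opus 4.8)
The plan is to exploit the fact that a rank-two positive semidefinite matrix with unit diagonal is the Gram matrix of unit vectors lying on a circle, and then to recover a truth assignment by a Goemans--Williamson style random hyperplane. First I would factor the feasible matrix as $Y = B^\top B$, where the columns $b_\alpha \in \mathbb{R}^2$ ($\alpha \in \GenSet$) satisfy $\|b_\alpha\| = 1$ because $\text{diag}(Y) = \mathbf{1}$; thus each $b_\alpha = (\cos\theta_\alpha, \sin\theta_\alpha)$ for some angle $\theta_\alpha$, and after a global rotation I may assume $b_\emptyset = (1,0)$. Under this factorization every quantity $Y(\gamma)$ appearing in \ref{eqn_RFprogramme} becomes an inner product $\langle b_\alpha, b_\beta \rangle = \cos(\theta_\alpha - \theta_\beta)$ with $\alpha \triangle \beta = \gamma$, and the constraint $Y \in \bigcap_{j} \Delta_j$ guarantees that, within each clause, this value depends only on $\gamma$. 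I would note that although $\GenSet$ in \eqref{eqn_FinThm} omits the singletons, for a length-three clause $C_j = \{i,i',i''\}$ each singleton value is still available as $Y(\{i\}) = \langle b_{C_j}, b_{C_j \setminus \{i\}} \rangle$, since both $C_j$ and $C_j \setminus \{i\}$ belong to $\GenSet$; hence the clause equation $\sum_{\gamma \subseteq C_j} (-1)^{|\gamma|} a_j^\gamma Y(\gamma) = 0$ is fully expressible in the $b_\alpha$.

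The rank-one case should be disposed of first: if $\operatorname{rank}(Y) \le 1$ then $Y = \mathbf{x}\mathbf{x}^\top$ with $\mathbf{x} \in \{\pm 1\}^{|\GenSet|}$ and $x_\emptyset = 1$, so that the global pair and triple entries satisfy $Y(\{i,i'\}) = x_i x_{i'}$ and $Y(\{i,i',i''\}) = x_i x_{i'} x_{i''}$; reading off the individual $x_i$ from these global entries (e.g.\ $x_i = Y(\{i,i',i''\})\, Y(\{i',i''\})$ for a length-three clause, propagating products along clauses otherwise) recovers a genuine $\{\pm 1\}$ assignment, and the clause equation then reduces exactly to the product expansion in \eqref{eqn_valueProp} forcing $v(C_j) = 1$, i.e.\ satisfiability.

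For the rank-two case I would round by drawing a uniform direction $r \in S^1$ and proposing a truth value for each variable out of the local circle data. The key geometric observation is that a literal of $C_j$ is falsified under the hyperplane rounding exactly when the corresponding Gram vector lies in a fixed open half-plane determined by $r$, so the clause is violated only if all three literal vectors of $C_j$ lie in a common half-plane; translating the clause equation into the angles $\theta_\alpha$ should show that the all-false configuration is incompatible with $\sum_{\gamma} (-1)^{|\gamma|} a_j^\gamma Y(\gamma) = 0$, so that for almost every $r$ each individual clause is satisfied. Intersecting the co-null sets of good directions over all $m$ clauses then yields a single $r$, hence a single assignment, satisfying every clause.

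The main obstacle is that the rounding analysis must be genuinely global: because $\Delta_j$ enforces consistency of the $Y(\gamma)$ only within a clause, the singleton data used to satisfy clause $C_j$ is a priori clause-local, and a variable shared by several clauses (which, by our assumptions, always happens) must receive one and the same value. I expect the real work to lie in showing that the globally defined pair entries $Y(\{i,i'\}) = \langle b_\emptyset, b_{\{i,i'\}} \rangle$---the only genuinely global degrees of freedom for this $\GenSet$---can be rounded consistently, and that this consistency is compatible with the per-clause ``not all false'' guarantee derived above. This is precisely the point at which rank being exactly two (so that all Gram vectors live on one circle) is indispensable, and it parallels, with different and in some respects more delicate bookkeeping, the rank-two argument of \citeauthor{anjos2004semidefinite}~\cite{anjos2004semidefinite}; indeed an alternative route would be to transport the feasible $Y$ to a feasible matrix for the $R_2$ relaxation \eqref{eqn_AnjosFset} and invoke their result directly, but reconciling the clause-local singleton vectors into one globally positive semidefinite matrix faces the same cross-clause consistency difficulty.
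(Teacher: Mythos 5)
Your rank-two argument has a genuine gap at its central step, and you flag it yourself: the hyperplane rounding is never shown to produce an assignment, and in this relaxation it cannot even be set up the way you describe. Since $\GenSet$ in \eqref{eqn_FinThm} contains no singletons, there are no Gram vectors attached to literals, so the phrase ``all three literal vectors of $C_j$ lie in a common half-plane'' has no referent. The only way to round a variable is through a product of two signs, $x_{i_1} := \sign(\langle r, b_{C_j}\rangle)\,\sign(\langle r, b_{C_j\setminus\{i_1\}}\rangle)$, and such products are neither independent of the clause used to define them nor multiplicatively consistent: nothing forces, say, $\sign(\langle r, b_{\{i_2,i_3\}}\rangle)\sign(\langle r, b_{\{i_1,i_3\}}\rangle) = \sign(\langle r, b_{\emptyset}\rangle)\sign(\langle r, b_{\{i_1,i_2\}}\rangle)$, which is exactly what would be required for the rounded values to come from a single $x \in \{\pm 1\}^n$. (Incidentally, your diagnosis of the difficulty is slightly off: by \eqref{eqn_symmDiffPodd} the set $\Delta_j$ identifies \emph{all} entries whose symmetric difference equals a given $\gamma \subseteq C_j$ --- the restriction is on $\gamma$, not on the index pairs --- so the scalars $Y(\gamma)$ are globally well defined; what fails to be globally defined is the sign pattern your rounding produces.) The per-clause claim that the clause equation excludes the all-false configuration for almost every $r$ is likewise asserted, not proven.

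The paper closes exactly this hole by a different mechanism, and that mechanism is the missing idea. Rank two is exploited through structural lemmas for positive semidefinite matrices with unit diagonal (Lemmas 3.9 and 3.11 of \cite{anjos2002geometry}): in each length-three clause at least one of $Y(i_1i_2)$, $Y(i_1i_3)$, $Y(i_2i_3)$ must equal some $\delta \in \{\pm 1\}$, and this forces the proportionality relations \eqref{eqn_deltaSwitches}. Substituting them factors the cubic clause constraint \eqref{eqn_SatConsPart} into the product \eqref{eqn_proofcase2} of the scalar $1 + a_1 a_2 \delta$ with a \emph{linear}, 2-SAT-type constraint in two entries of $Y$. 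Either the scalar vanishes, and the clause is satisfied automatically by $Y(i_1i_2) = \delta$, or the submatrix of $Y$ indexed by the surviving sets is feasible for the \ref{GapRelax} relaxation of a derived 2-SAT instance; Theorem~5.1 of \cite{de2000relaxations} then yields a satisfying assignment of that 2-SAT instance, which is lifted back to $\Logical$ through the $\delta$'s. Your proposal never extracts these $\pm 1$ degeneracies, which are precisely what rank two buys and what makes the reduction work; a rounding argument, if one insists on it, belongs inside the quoted 2-SAT result, not at the level of the 3-SAT relaxation.
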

\begin{proof}
The proof is adapted from Theorem 3 in \cite{anjos2004semidefinite}, in which the theorem is proven for the case that $\GenSet$ is given as in \eqref{eqn_AnjosFset}.

Since $\Logical$ is a 3-SAT instance, there exists a clause of length three. Fix a $j$ for which $C_j = \{ i_1,i_2,i_3\}$ and set
\begin{align}
    \GenSet_j = \left \{ \alpha \subseteq [n] \, | \, \alpha \subseteq C_j, \, \alpha \in \GenSet \right \},
\end{align}
for $\GenSet$ as in \eqref{eqn_FinThm}. Let $Y$ be feasible solution to \ref{eqn_RFprogramme} of rank 2. 
Consider the submatrix of $Y$ indexed by the elements of $\GenSet_j$,
\begin{align}
    \label{eqn_thm1proof}
    \begin{bmatrix}
    1 & Y(i_1 i_2) &Y(i_1 i_3) &Y(i_2 i_3) & Y(i_1 i_2 i_3) \\
    Y(i_1 i_2) & 1 &Y(i_2 i_3) & Y( i_1i_3) & Y(i_3)\\
    Y(i_1i_3) & Y(i_2i_3) & 1 & Y(i_1i_2) & Y(i_2) \\
    Y(i_2i_3) & Y(i_1i_3) & Y(i_1i_2)  & 1 & Y(i_1) \\
    Y(i_1i_2i_3) & Y(i_3) & Y(i_2) & Y(i_1) & 1
    \end{bmatrix}.
\end{align}
where $Y(\cdot)$ is given as \eqref{eqn_yTdef}. For example, $Y(i_1 i_2) = Y_{\emptyset, i_1  i_2}$ and $Y(i_1) = Y_{i_2i_3,i_1i_2i_3}$. As the top left \mbox{4 $\times $ 4} submatrix of \eqref{eqn_thm1proof} has rank at most 2, it can be proven (Lemma 3.11 from \cite{anjos2002geometry}) that at least one of $Y(i_1i_2)$, $Y(i_1i_3)$, $Y(i_2i_3)$ equals $\delta \in \{ \pm 1 \}$.

Assume without loss of generality that $Y(i_1i_2) = \delta$. Consider now the submatrices of $Y$ indexed by rows and columns $\{ \emptyset,i_1i_2,i_1i_3 \}$, $\{ i_1i_3,i_2i_3,i_1i_2i_3 \}$ and  $\{ \emptyset, i_1i_2,i_1i_2i_3 \}$. As the diagonals of these positive semidefinite matrices equal $\mathbf{1}_3$, it can be shown (Lemma 3.9 from \cite{anjos2002geometry}) that
\begin{align}
    \label{eqn_deltaSwitches}
    Y(i_1i_3) =  \delta Y(i_2i_3), \,\,
    Y(i_2) =  \delta Y(i_1),  \,\,
    Y(i_3) = \delta Y(i_1i_2i_3).
\end{align}
This allows us to simplify the satisfiability constraint on $C_j$, given by
\begin{align}
    \label{eqn_SatConsPart}
    1 - a_1 Y(i_1) - a_2 Y(i_2) - a_3 Y(i_3) + a_1a_2Y(i_1i_2) + a_1 a_3 Y(i_1i_3) + a_2 a_3 Y(i_2 i_3) - a_1 a_2 a_3 Y(i_1 i_2 i_3) = 0,
\end{align}
see \eqref{eqn_valueProp}. Here, we have written $a_k$ for $a_{j,i_k}$, see \eqref{def_ajVector}. We rewrite \eqref{eqn_SatConsPart} by substituting $Y(i_1i_2) = \delta$ and \eqref{eqn_deltaSwitches} to obtain
\begin{align}
    \label{eqn_subeqnpart1}
     1 - a_1 Y(i_1) - a_2 \delta Y(i_1) - a_3 \delta Y(i_1 i_2 i_3) + a_1a_2 \delta + a_1 a_3 \delta Y(i_2i_3) + a_2 a_3 Y(i_2 i_3) - a_1 a_2 a_3 Y(i_1 i_2 i_3) = 0.
\end{align}
Using that $a_k^2 = 1$, \eqref{eqn_subeqnpart1} can be factorized as
\begin{align}
    \label{eqn_proofcase2}
    \bigg[1 + a_1 a_2 \delta \bigg] \bigg[ 1 - a_1 Y(i_1) + a_2 a_3 Y(i_2i_3) - a_1 a_2 a_3 Y(i_1i_2i_3) \bigg] = 0.
\end{align}
In the case $1+a_1 a_2 \delta \neq 0$, equation \eqref{eqn_proofcase2} reduces to
\begin{align}
    \label{eqn_proofcase3}
    1 - a_1 Y(i_1) + a_2 a_3 Y(i_2i_3) - a_1 a_2 a_3 Y(i_1i_2i_3) = 0,
\end{align}
which is a linear constraint in the entries of matrix $Y$. In case $1 + a_1 a_2 \delta = 0$, clause $C_j$ holds automatically for $Y(i_1i_2) = \delta$.

Consider the clauses $C_j$ of length three, which can all be factorized as $g_j(\delta) f_j(Y) = 0$, as shown in \eqref{eqn_proofcase2}. For all $j$ such that $g_j(\delta) \neq 0$, let $B_j$ be the set of the two subsets appearing in the intersection of $f_j(Y)$ and $\GenSet$. Note that for the particular equation \eqref{eqn_proofcase2}, we would have
\begin{align}
    B_j = \{ \{i_2,i_3\}, \{ i_1, i_2, i_3 \} \},
\end{align}
in case $1 + a_1 a_2 \delta  \neq 0$. Let $B$ be the union of all $B_j$. If we define
$    z_1 = \{ i_2, i_3 \} \text{ and } z_2 = \{i_1, i_2, i_3 \},$
then \eqref{eqn_proofcase3} is the constraint of the semidefinite relaxation of a clause of length two on the variables $z_1$ and $z_2$, as $Y(i_1) = Y_{z_1, z_2}$, see \eqref{eqn_valueProp}.

The submatrix of $Y$ indexed by all sets in $B$ can thus be viewed as a matrix indexed by singletons (by associating a new $z$ variable to each element of $B$). As $Y$ is feasible for \ref{eqn_RFprogramme}, it is automatically feasible for the \ref{GapRelax} relaxation corresponding to some 2-SAT instance on the $z$ variables, which implies satisfiability of the corresponding 2-SAT instance (Theorem 5.1 \cite{de2000relaxations}). This implies the $z$ variables have a satisfying truth assignment. Given such a truth assignment for the $z$ variables, it is not hard to extend the truth assignment to the original variables of $\Logical$, by using the appropriate values of $\delta$, see \eqref{eqn_deltaSwitches}. This proves the theorem.
\end{proof}

\section{Sum of squares and the MAX-SAT}
\label{section_SOSandMaxSat}

In  \Cref{sec:general} we first provide an  overview of the approach of  \citeauthor{van2008sums} \cite{van2008sums} for deriving relaxations for the MAX-SAT. Their approach exploits SOS optimization, which has received much attention in the literature, see e.g., \cite{lasserre2007sum,laurent2009sums, ParilloThomas,scheiderer2009positivity}.
Relaxations depend on a basis that is used to compute them. We introduce a parametric family of monomial bases with increasing complexity. In \Cref{sec:sosp} we derive several properties of monomial bases that are later used in our computations.  

\subsection{General overview}
\label{sec:general}
For a given logical proposition $\Logical$, on $n$ variables and $m$ clauses, the value
\begin{align}
    \label{eqn_FbFunctionDefinition}
    F_{\phi}(x) :=  \sum_{j = 1}^m \frac{1}{2^{\ell_j}}\prod_{i \in C_j}(1-a_{j,i} x_i ),
\end{align}
for $a_{j,i}$ as in \eqref{def_ajVector}, equals the number of unsatisfied clauses by truth assignment $x \in \{ \pm 1 \}^n$. Hence, we are interested in minimizing $F_\Logical$ over $\{ \pm 1 \}^n$, on which $F_\Logical$ is nonnegative. Let $\mathbb{R}[x]$ be the set of real polynomials in $x$. We define
\begin{align}
    \label{eqn_Putinar}
    \mathcal{V} := \Big\{ f \, \Big| \, f \equiv \sum_{j= 1}^k f_j^2 \mod I, \, f_j \in \mathbb{R}[x] \,\, \forall j \in [k], \, k \in \mathbb{N}  \Big\}
\end{align}
as the set of SOS polynomials modulo $I$, where $I$ is the vanishing ideal of $\{ \pm 1 \}^n$. That is
\begin{align}
\label{eqn_modIdef}    
I = \langle 1-x_1^2, 1-x_2^2, \ldots, 1- x_n^2 \rangle.
\end{align}

By \textit{Putinars Positivstellensatz} \cite{putinar1993positive}, $\mathcal{V}$ is the set of nonnegative polynomials on $\{ \pm 1 \}^n$. 
Generally, optimization over $\mathcal{V}$ is intractable due to its size, which is why we consider
\begin{align}
    \label{eqn_VxSet}
    \mathcal{V}_\mathbf{x} := \{ f \, | \, f \equiv \mathbf{x}^\top M \mathbf{x} \mod I, \, M \succeq 0 \},
\end{align}
where $\mathbf{x}$ is some monomial basis. Since $M \succeq 0$, it follows that all polynomials of $\mathcal{V}_\mathbf{x}$ are nonnegative on $\{ \pm 1 \}^n$. Therefore, $\mathcal{V}_\mathbf{x} \subseteq \mathcal{V}$, and we may approximate the minimum of $F_\Logical$ by 
\begin{align}
    \label{eqn_PolySOS}
    \min_{x \in \{ \pm 1 \}^n} F_{\phi} = \sup \{ \lambda \, | \, F_{\phi} - \lambda \in \mathcal{V} \} \geq \sup \{ \lambda \, | \, F_{\phi} - \lambda \in \mathcal{V}_\mathbf{x} \}.
\end{align}
The description of $\mathcal{V}_\mathbf{x}$ shows that computing this lower bound can be done via SDP.

It is important to note that in the quotient ring of $\mathbb{R}[x]$ modulo $I$, all terms $x_i^2 \equiv 1$, and thus it suffices to consider only monomials in $\mathbf{x}$ for which the highest power is at most 1. Thus, we can write
\begin{align}
    \label{eqn_polynomialDef}
    F_{\phi}(x) = \sum_{\alpha \subseteq [n]} p^{\alpha}_\Logical x^\alpha, 
\end{align}
where $p_\Logical^\alpha \in \mathbb{R}$ for all $\alpha \subseteq [n]$ and $x^\alpha$ as in \eqref{eqn_xAlphaDef}. 
For the constant term of $F_{\phi}(x)$, we have
\begin{align}
    \label{eqn_constantTermF}
    p_\Logical^{\emptyset} =  \sum_{j = 1}^m \frac{1}{2^{\ell_j}}.
\end{align}
We say that monomial basis $\mathbf{x}$ \textit{represents} a logical proposition $\Logical$ if matrix $\mathbf{X} \equiv \mathbf{x} \mathbf{x}^\top \mod I$ contains all monomials $x^\alpha$ for which $p^\alpha_\Logical \neq 0$. We index this matrix $\mathbf{X}$ and the matrix $M$ from \eqref{eqn_VxSet} with subsets $\alpha \subseteq [n]$ for which $x^\alpha \in\mathbf{x}$. Note that for such $\alpha, \beta \subseteq [n]$, we have
\begin{align}
    \label{eqn_xxTmod}
    \mathbf{X}_{\alpha, \beta} \equiv x^{\alpha \triangle \beta} \mod I.
\end{align}
For $\alpha \subseteq [n]$, we write $x^\alpha \in \mathbf{X}$ if $\mathbf{X}$ has an entry equal to $x^\alpha$ (modulo $I$). \citeauthor{van2008sums} \cite{van2008sums} propose multiple monomial bases $\mathbf{x}$, among them basis $SOS_p$, given by
\begin{align}
    \label{eqn_sosPbasis}
    \mathbf{x} = 1 \cup \{ x_i \, | \, i \in [n] \} \cup \{ x_i x_j \, | \, i \text{ and } j \text{ appear together in a clause}\}.
\end{align}
It is stated in \cite{van2008sums} that $SOS_p$ represents 2-SAT and 3-SAT instances. While this is true, this basis also represents 4-SAT instances (see \Cref{lemma_cardXGammaUpperBounds}). We additionally define for $\mathbf{Q} \in \mathbb{N}$, as extension to $SOS_p$, the basis
\begin{align}
    \label{eqn_sosPQBasis}
    SOS_p^{\mathbf{Q}} := SOS_p \cup \{ x_i x_j \, | \, i \text{ and } j \text{ are both in the top } \mathbf{Q} \text{ appearing variables} \}.
\end{align}
Basis $SOS_p^{\mathbf{Q}}$ takes basis $SOS_p$ and adds all the $\binom{\mathbf{Q}}{2}$ quadratic terms of the $\mathbf{Q}$ variables appearing in the highest number of clauses of $\Logical$. Any basis $\mathbf{x}$ is considered to have duplicate monomials removed, and so, for small values of $\mathbf{Q}$, bases $SOS_p^{\mathbf{Q}}$ and $SOS_p$ might coincide.

 We also define the basis $SOS_s^{\theta}$, for $\theta \in [0,1]$, which is suited for  (MAX-)2-SAT instances. This basis consists of all the monomials of degree one and zero, plus a percentage $\theta$ of all quadratic monomials appearing in $SOS_p$. The included quadratic monomials are those that appear in $SOS_p$ and attain the highest monomial weight $w$, which is defined as $w(x^\alpha) := \sum_{i \in \alpha} w(i)$, where $w(i) := | \{ C \in \Logical \, | \, i \in C\}|$, for $i \in [n]$. This results in the following chain of inclusions:
\begin{align}
    \label{eqn_monBasisInclusionChain}
    \{ x^\alpha \, | \, |\alpha| \leq 1 \} = SOS_s^0 \subseteq SOS^\theta_s \subseteq SOS_s^1 = SOS_p = SOS_p^0 \subseteq SOS_p^{\mathbf{Q}} \subseteq SOS_p^n = \{ x^\alpha \, | \, |\alpha| \leq 2 \}.
\end{align}

We now define for all $\gamma \subseteq [n]$ such that $x^\gamma \in \mathbf{X}$, a set of ordered pairs  as follows
\begin{align}
    \label{eqn_MgammaDef}
   \mathbf{x}^\gamma := \{ (\alpha, \beta) \subseteq [n]^2 \, | \, \alpha \triangle \beta = \gamma, \, x^\alpha \in \mathbf{x}, \, x^\beta \in \mathbf{x} \}.
\end{align}
Set $\mathbf{x}^\gamma$ contains the index pairs $(\alpha, \beta)$ such that $\mathbf{X}_{\alpha, \beta} \equiv x^\gamma$. Therefore, $F_{\phi} \equiv \mathbf{x}^\top M \mathbf{x}$ if and only if
\begin{align}
    \label{eqn_xMxExplicitCons}
    \sum_{(\alpha, \beta) \in \mathbf{x}^\gamma} M_{\alpha, \beta} = p^\gamma_{\Logical}, \quad \forall \gamma \text{ such that } x^\gamma \in \mathbf{X}.
\end{align}
Constraints of this form are sometimes referred to as \textit{coefficient matching conditions} in SOS literature \cite{zheng2017exploiting}. We define
\begin{align}
    \label{eqn_mathcalMset}
    \mathcal{M}_{\phi} := \bigg\{ M \in \mathcal{S}^{|\mathbf{x}|} \, | \,\sum_{(\alpha, \beta) \in \mathbf{x}^\gamma} M_{\alpha, \beta} = p_\Logical^{\gamma}, \quad \forall \gamma \neq \emptyset \text{ such that } x^\gamma \in \mathbf{X} \bigg\},
\end{align}
as the set of matrices satisfying the coefficient matching conditions, for all monomials except $x^\emptyset$.

Note that  $M$ is constrained to be symmetric which is reflected in the definition of $\mathbf{x}^\gamma$, since $(\alpha, \beta) \in \mathbf{x}^\gamma$ if and only if $(\beta, \alpha) \in \mathbf{x}^\gamma$. Moreover, $\mathbf{x}^\emptyset$ contains the index pairs of the diagonal entries of $M$, which correspond to zero-degree monomials in $\mathbf{X}$. Hence, if $F_\Logical - \lambda \equiv \mathbf{x}^\top M \mathbf{x}$, then $p^\emptyset_\Logical - \lambda = \langle I, M \rangle$, see \eqref{eqn_PolySOS} and \eqref{eqn_constantTermF}. To maximize the lower bound on $F_\Logical$, see \eqref{eqn_PolySOS}, we maximize $\lambda$, which is thus equivalent to minimizing $\langle I, M \rangle$. We can therefore compute this lower bound by solving the following SDP: 
\begin{equation}
\label{eqn_SumSquaresExplicit}
\tag{$\texttt{P}_\Logical$}
\begin{aligned}
\text{min} \quad & \langle I, M \rangle \\
\textrm{s.t.} \quad & M \in \mathcal{M}_\Logical \cap \mathcal{S}_+.
\end{aligned}
\end{equation}

We note that, for the purpose of solving \ref{eqn_SumSquaresExplicit} through interior point methods, programme \ref{eqn_SumSquaresExplicit} is strictly feasible: for any feasible matrix $M$, matrix $M+I$ is strictly feasible. The existence of any such feasible matrix $M$ follows from the nonnegativity of $F_\Logical$ on $\{ \pm 1 \}^n$. We postpone the derivation of the dual of \ref{eqn_SumSquaresExplicit} to \Cref{section_dualRelation}, where we also show its strict feasibility in \Cref{thm_dualEquivalence}.

\subsection{Properties of $SOS_p^\mathbf{Q}$}
\label{sec:sosp}
We provide provide several properties of monomial bases that are exploited within the PRSM, see~\Cref{section_PRSMsection}.

Denote by $| \mathbf{x}^\gamma|$ the cardinality of the set $\mathbf{x}^\gamma$, see \eqref{eqn_MgammaDef}. Due to the symmetry of $\mathbf{X}$, see \eqref{eqn_xxTmod}, $|\mathbf{x}^\gamma|$ is an even number and greater than or equal to 2. In particular, when $|\mathbf{x}^\gamma| = 2$, say $\mathbf{x}^\gamma = \{ (\alpha, \beta), (\beta, \alpha)\}$, we have
\begin{align}
    \label{eqn_unitCons}
    M_{\alpha, \beta} + M_{\beta, \alpha} = p^\gamma_\Logical \text{ and }  M_{\alpha, \beta} = M_{\beta, \alpha} \Longrightarrow M_{\alpha, \beta} = M_{\beta, \alpha} = p^\gamma_\Logical / 2.
\end{align}
Thus, whenever $|\mathbf{x}^\gamma| = 2$, the constraint involving $\mathbf{x}^\gamma$ in $\mathcal{M}_\Logical$, see \eqref{eqn_mathcalMset}, simply fixes two entries of $M$. \citeauthor{van2008sums} \cite{van2008sums} refer to these constraints arising from $|\mathbf{x}^\gamma| = 2$ as \textit{unit constraints}. In section 7 of \cite{van2008sums}, the authors empirically show that a high percentage of the constraints of $\mathcal{M}_\Logical$ are unit constraints. The authors of \cite{van2008sums} propose as future work the development of an SDP solver that is able to exploit the large number of unit constraints. We propose an algorithm for approximately solving \ref{eqn_SumSquaresExplicit} in \Cref{section_PRSMsection}, which is able to do so.

The following lemma describes the subsets $\gamma$ that induce unit constraints.

\begin{lemma}
\label{lemma_unitConstraintsCoeff}
Let $\Logical$ be a (MAX-)SAT instance on $n$ variables and $m$ clauses, and $\mathbf{x}$ a monomial basis which contains at least all of the monomials induced by the $SOS_p$ basis, see \eqref{eqn_sosPbasis}. Then, for all $\gamma \subseteq [n]$, we have
\begin{align}
    | \mathbf{x}^\gamma | = 2 \quad \Longrightarrow  \quad p_\Logical^\gamma = 0,
\end{align}
where $p_\Logical^\gamma$ is a coefficient of $F_{\phi}(x)$, see \eqref{eqn_polynomialDef}.
\end{lemma}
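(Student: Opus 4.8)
The plan is to prove the contrapositive: I will show that whenever $p_\Logical^\gamma \neq 0$, the set $\mathbf{x}^\gamma$ from \eqref{eqn_MgammaDef} contains strictly more than two ordered pairs, so that $|\mathbf{x}^\gamma| = 2$ can occur only when $p_\Logical^\gamma = 0$. The starting point is that the coefficients $p_\Logical^\gamma$ are supported only on subsets of clauses. Expanding each product in \eqref{eqn_FbFunctionDefinition} shows that the monomial $x^\gamma$ occurs in $\prod_{i \in C_j}(1 - a_{j,i} x_i)$ only for $\gamma \subseteq C_j$, with coefficient $(-1)^{|\gamma|} a_j^\gamma$; comparing with \eqref{eqn_polynomialDef} yields
\[
    p_\Logical^\gamma = (-1)^{|\gamma|} \sum_{j \,:\, \gamma \subseteq C_j} 2^{-\ell_j}\, a_j^\gamma .
\]
In particular, if $\gamma$ lies in no clause then $p_\Logical^\gamma = 0$ and the implication holds trivially. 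Hence it suffices to treat $\gamma \neq \emptyset$ with $\gamma \subseteq C_j$ for some $j$, and to exhibit two distinct \emph{unordered} pairs $\{\alpha, \beta\}$ with $\alpha \triangle \beta = \gamma$ and $x^\alpha, x^\beta \in \mathbf{x}$; by the symmetry of $\mathbf{x}^\gamma$ each such unordered pair contributes two ordered pairs, giving $|\mathbf{x}^\gamma| \geq 4$.

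The core is a short case analysis on $|\gamma|$. Since every element of $SOS_p$, hence of $\mathbf{x}$, has degree at most two, any $\gamma$ with $\mathbf{x}^\gamma \neq \emptyset$ (equivalently $x^\gamma \in \mathbf{X}$) satisfies $|\gamma| \leq 4$, so only $|\gamma| \in \{1,2,3,4\}$ need be handled. The decompositions I use only ever split $\gamma$ into subsets of size at most two, and because $\gamma \subseteq C_j$ any two variables of $\gamma$ are clause-mates; consequently each such subset corresponds to a monomial in $\mathbf{x}$, as $\mathbf{x} \supseteq SOS_p$ contains the constant, all singletons, and all quadratics $x_i x_j$ with $i,j$ jointly in a clause, see \eqref{eqn_sosPbasis}. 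For $|\gamma| = 2$ I use $\{\emptyset, \gamma\}$ together with the split of $\gamma$ into its two singletons; for $|\gamma| = 3$ the three ways of splitting $\gamma$ into a singleton and a pair; and for $|\gamma| = 4$ the three ways of splitting $\gamma$ into two disjoint pairs. In each of these cases at least two distinct unordered decompositions arise, so $|\mathbf{x}^\gamma| \geq 4$.

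The case $|\gamma| = 1$ is the tight one, and is where I expect the only genuine difficulty. Writing $\gamma = \{i\}$, the decomposition $\{\emptyset, \{i\}\}$ is available, but the subsets of $\gamma$ alone cannot supply a second one. To produce it I invoke the standing Assumption~2 that $|C_j| \geq 2$: the clause $C_j \supseteq \{i\}$ contains a second variable $i' \neq i$, and then $\{\{i'\}, \{i,i'\}\}$ is a valid decomposition with $x_{i'} \in \mathbf{x}$ and $x_i x_{i'} \in \mathbf{x}$, since $i$ and $i'$ are clause-mates. As $\emptyset \neq \{i'\}$, these two unordered pairs are distinct, so again $|\mathbf{x}^\gamma| \geq 4$. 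This is precisely the step that needs the hypothesis that every clause has length at least two: without it, a unit clause $C_j = \{i\}$ could produce a genuine unit constraint with $p_\Logical^{\{i\}} \neq 0$. What remains is routine bookkeeping, namely verifying distinctness of the exhibited pairs in each case and disposing of $\gamma = \emptyset$; for the latter, $\mathbf{x}^\emptyset$ consists of the diagonal pairs $(\alpha,\alpha)$ with $x^\alpha \in \mathbf{x}$, so $|\mathbf{x}^\emptyset| = |\mathbf{x}| \geq n+1 > 2$ and the antecedent never holds. Combining the cases establishes the contrapositive, and hence the lemma.
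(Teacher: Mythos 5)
Your proof is correct and follows essentially the same route as the paper's: both argue that any $\gamma$ contained in a clause admits at least two distinct unordered decompositions $\alpha \triangle \beta = \gamma$ into basis monomials (hence $|\mathbf{x}^\gamma| \geq 4$), so $|\mathbf{x}^\gamma| = 2$ forces $\gamma$ to lie in no clause and thus $p_\Logical^\gamma = 0$; your explicit splits are exactly what the paper's symmetric-difference matrix $S_\triangle$ encodes, and your invocation of Assumption~2 for $|\gamma| = 1$ merely makes explicit what the paper leaves implicit. One caveat, which you share with the paper's own proof: the inference ``every element of $\mathbf{x}$ has degree at most two'' does not follow from $\mathbf{x} \supseteq SOS_p$ alone (the lemma permits larger bases), so both arguments implicitly restrict to bases whose monomials have degree at most two, such as $SOS_p^{\mathbf{Q}}$ --- without this restriction the case $|\gamma| > 4$ is not actually disposed of, and one can even build counterexamples to the stated lemma using a basis containing a cubic monomial.
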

\begin{proof}
From the definition of $F_{\phi}(x)$, see \eqref{eqn_FbFunctionDefinition} and \eqref{eqn_polynomialDef}, it follows that, for all $\gamma \subseteq [n]$,
\begin{align}
    \label{eqn_subsetZeroCoeffImplication}
    \gamma \not \subseteq C_j \quad \forall j \in [m] \quad \Longrightarrow \quad p^\gamma_\Logical = 0.
\end{align}
We proceed by showing that, if $| \mathbf{x}^\gamma | = 2$ for some $\gamma \subseteq [n]$, then $\gamma$ cannot be contained in a clause. For this purpose, let $\alpha$ be contained in some clause of $\Logical$, say $C_j$. We distinguish separate cases for $| \alpha |$. Let us consider the case $| \alpha | \in \{ 0,1,2\}$. Then, without loss of generality, we assume $\alpha \subseteq \{ i_1, i_2 \} \subseteq C_j$. Now (some of) the monomials $x^\alpha \in \mathbf{x}$ induced by $C_j$ are those for which $\alpha \in S := \{ \emptyset, i_1, i_2, \{ i_1, i_2 \} \}$.

We display all the subsets obtained by taking pairwise symmetric differences of elements of $S$ in the symmetric matrix
\begin{align}
\label{eqn_SjMatrix}
S_\triangle := \begin{bmatrix}
       \emptyset &  i_1  &  i_2   &  i_1 i_2    \\
    &\emptyset &  i_1 i_2  &  i_2\\
    &  &\emptyset &  i_1 \\
    &  &  &\emptyset \\
   \end{bmatrix}.
\end{align}

For clarity, we omitted the lower triangular part of $S_\triangle$ (which is fixed by symmetry). Observe that for any of these $\alpha$ satisfying $|\alpha| \in \{0,1,2\}$, we have $| \mathbf{x}^\alpha | \geq 4$. In the case $| \alpha | = 3$, clause $C_j$ must be of length at least three, and we assume, without loss of generality, that $\alpha = \{ i_1, i_2, i_3 \} \subseteq C_j$. By again constructing $S_\triangle$ (details omitted), one can show that $|\mathbf{x}^\alpha| \geq 4$. The proof is similar for $| \alpha | = 4$. Lastly, if $| \alpha | = 5$, then by definition of $SOS_p$, see \eqref{eqn_sosPbasis}, $| \mathbf{x}^\alpha |= 0$.

Thus, for all $\alpha \subseteq [n]$ contained in some clause of $\Logical$, $| \mathbf{x}^\alpha| \neq 2$. Therefore,
\begin{align}
    |\mathbf{x}^\gamma| = 2 \Longrightarrow \gamma \not \subseteq C_j \quad \forall j \in [m],
\end{align}
which, combined with \eqref{eqn_subsetZeroCoeffImplication}, completes the proof.
\end{proof}

\Cref{lemma_unitConstraintsCoeff} implies that, in an implementation which uses the $SOS_p$ basis, it is not required to store the coefficients corresponding to unit constraints (since these all equal 0), but only the indices restricted by the unit constraints. The converse of \Cref{lemma_unitConstraintsCoeff} is generally not true. That is, there can exist many subsets $\gamma \subseteq [n]$ for which $p_\Logical^\gamma = 0$, but $|\mathbf{x}^\gamma| > 2$.

\begin{lemma}
\label{lemma_cardXGammaUpperBounds}
Let $\Logical$ be a SAT instance on $n$ variables and $\mathbf{x}$ its monomial basis according to $SOS_p^\mathbf{Q}$, for some $\mathbf{Q} \in \mathbb{N}$. Let $\gamma \subseteq [n]$. Then
\begin{enumerate}
    \item $|\gamma| \in \{1,2\} \Longrightarrow |\mathbf{x}^\gamma| \leq 2n$. \label{item_12}
    \item $|\gamma| \in \{3,4\} \Longrightarrow |\mathbf{x}^\gamma| \leq 6$. \label{item_34}
    \item $|\gamma| > 4 \Longrightarrow |\mathbf{x}^\gamma| = 0$. \label{item_gamma5}
\end{enumerate}
\end{lemma}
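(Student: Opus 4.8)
The plan is to analyze the set $\mathbf{x}^\gamma = \{ (\alpha, \beta) \, | \, \alpha \triangle \beta = \gamma, \, x^\alpha, x^\beta \in \mathbf{x} \}$ directly, by reasoning about which pairs of basis monomials can have a given symmetric difference $\gamma$. Throughout I will use the structure of the $SOS_p^\mathbf{Q}$ basis, see \eqref{eqn_sosPbasis} and \eqref{eqn_sosPQBasis}: every monomial in this basis has degree at most two, i.e., $|\alpha| \leq 2$ for all $x^\alpha \in \mathbf{x}$. Since $|\alpha \triangle \beta| \leq |\alpha| + |\beta|$, and both $|\alpha|, |\beta| \leq 2$, we immediately get $|\gamma| = |\alpha \triangle \beta| \leq 4$. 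This settles item \ref{item_gamma5}: if $|\gamma| > 4$, then no pair $(\alpha, \beta)$ of degree-$\leq 2$ monomials can produce $\gamma$, so $|\mathbf{x}^\gamma| = 0$.

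For items \ref{item_12} and \ref{item_34}, the key observation is that in a pair $(\alpha, \beta)$ with $\alpha \triangle \beta = \gamma$, the element $\alpha$ uniquely determines $\beta = \alpha \triangle \gamma$ (and vice versa), so counting $\mathbf{x}^\gamma$ reduces to counting the admissible \emph{first coordinates} $\alpha$, i.e., those $\alpha$ with both $x^\alpha \in \mathbf{x}$ and $x^{\alpha \triangle \gamma} \in \mathbf{x}$. First I would handle item \ref{item_34}: when $3 \leq |\gamma| \leq 4$, the degree constraint forces both $|\alpha| = 2$ and $|\beta| = 2$ (a degree-two monomial paired with anything of degree $\leq 1$ could give symmetric difference of size at most $3$, and only in restricted configurations). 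Writing $\alpha$ and $\beta$ as two-element sets whose symmetric difference is $\gamma$, I would enumerate the ways to split or overlap $\gamma$. Concretely, for $|\gamma| = 4$, the only possibility is to partition $\gamma$ into two disjoint pairs $\alpha, \beta$, and there are $3$ such unordered partitions, giving at most $6$ ordered pairs. For $|\gamma| = 3$, one writes $\gamma = \{i,j,k\}$ and each admissible $\alpha$ is a two-element subset of $\gamma$ with $\beta = \alpha \triangle \gamma$ the complementary singleton — but since singletons are excluded here one needs $\alpha \triangle \gamma$ to also have size two, forcing $\alpha$ and $\beta$ to share exactly one element; counting these configurations again yields at most $6$. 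The bound $6$ in item \ref{item_34} is thus a small finite enumeration over how $\gamma$ can be realized as a symmetric difference of two pairs.

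For item \ref{item_12}, when $|\gamma| \in \{1, 2\}$, the pairs can mix degrees, which is what produces the linear-in-$n$ bound. The plan is to bound the number of admissible $\alpha$ by noting that $\alpha$ ranges over monomials of $\mathbf{x}$, of which the degree-one monomials number exactly $n$, and for each such $\alpha$ the partner $\beta = \alpha \triangle \gamma$ is forced; together with the contributions from $\alpha = \emptyset$ and from degree-two $\alpha$, the total number of valid first coordinates is at most $2n$ (roughly: the $n$ singletons, plus their partners, plus a bounded number of low-degree terms). I would make this precise by splitting according to $|\alpha|$ and using that fixing $\gamma$ with $|\gamma| \leq 2$ leaves at most $O(n)$ freedom in choosing $\alpha$ while keeping both $\alpha$ and $\alpha \triangle \gamma$ of degree $\leq 2$.

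The main obstacle I anticipate is item \ref{item_12}: unlike the clean finite enumeration for $|\gamma| \geq 3$, here one must carefully account for \emph{which} degree-two monomials are actually present in $SOS_p^\mathbf{Q}$, since not all $\binom{n}{2}$ quadratics belong to the basis — only those induced by co-occurrence in a clause or by the top $\mathbf{Q}$ variables. The subtlety is showing the count cannot exceed $2n$ even as $\mathbf{Q}$ grows; the saving grace is that for $|\gamma| \leq 2$ the dominant contribution comes from pairing a singleton with another singleton or with $\emptyset$, and the degree-two partners are tightly constrained by requiring $\alpha \triangle \gamma$ to again lie in the (sparse) quadratic part of the basis, so the enumeration remains controlled by the $n$ singletons.
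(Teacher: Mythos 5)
Your overall strategy, namely bounding $|\mathbf{x}^\gamma|$ by enumerating all pairs of sets of size at most two whose symmetric difference equals $\gamma$, independently of which monomials actually belong to the basis, is exactly the paper's proof: part 3 via the degree bound, part 1 via the $2n$ ordered tuples of the form $(\emptyset,\gamma)$, $(\{i\},\{j\})$ and $(\{i,k\},\{j,k\})$, and the case $|\gamma|=4$ via the $\binom{4}{2}=6$ ordered partitions into two disjoint pairs. Your worry in part 1 about the count growing with $\mathbf{Q}$ is unfounded for precisely the reason you yourself give: basis membership can only delete pairs from this full enumeration, so the bound of $2n$ ordered pairs holds verbatim for any $\mathbf{Q}$.

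The one genuinely flawed step is your treatment of $|\gamma|=3$. You assert that the degree constraint ``forces both $|\alpha|=2$ and $|\beta|=2$''; this is not just false but impossible, since $|\alpha \triangle \beta| = |\alpha|+|\beta|-2|\alpha\cap\beta|$ always has the same parity as $|\alpha|+|\beta|$, so two sets of size two can never have a symmetric difference of odd cardinality. You then write down the correct enumeration ($\alpha$ a two-element subset of $\gamma$, $\beta$ the complementary singleton), but immediately discard it on the grounds that ``singletons are excluded here'' --- they are not: $SOS_p^\mathbf{Q}$ contains all degree-one monomials $x_i$, $i\in[n]$, see \eqref{eqn_sosPbasis} and \eqref{eqn_sosPQBasis} --- and replace it by configurations in which $\alpha$ and $\beta$ are two-element sets sharing exactly one element; such pairs have symmetric difference of size two, not three, and hence contribute nothing, so executed literally your count for $|\gamma|=3$ would be wrong even though the number $6$ you state is right. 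The fix is simply to delete the detour: by the parity observation, $|\gamma|=3$ forces $\{|\alpha|,|\beta|\}=\{1,2\}$ with $\alpha\cap\beta=\emptyset$ and $\alpha\cup\beta=\gamma$, i.e., exactly the three singleton/complementary-pair splits you first listed, giving $6$ ordered pairs as claimed and as in the paper.
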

\begin{proof}
The proof follows from enumerating the combination of sets such that their pairwise difference is equal to $\gamma$. For part \ref{item_12} of the lemma, assume first, without loss of generality, that $\gamma = \{ 1 \}$. Then consider the tuples $( \emptyset, \{ 1 \} )$ and $( \{1,k\}, \{  k \} )$, for $k \in [n] \setminus \{1 \}$. There are $2n$ of these tuples (counting the symmetry of order twice) and their symmetric differences all equal $\gamma$. Next, we assume without loss of generality that $\gamma = \{1,2\}$. Then the tuples $( \{1\}, \{2\} ), \, (\{1,2\}, \emptyset )$ and $( \{1,k\}, \{2,k\} )$, for $k \in [n] \setminus \{1,2 \}$, have their pairwise symmetric difference equal to $\gamma$. There are $2n$ of these tuples, which proves part \ref{item_12} of the lemma.

Assuming that $\gamma = \{1,2,3\}$, we find the 6 tuples as $( \{ 1\}, \{ 2,3\} ), \, (\{ 2\}, \{ 1,3\}), \, (\{ 3\}, \{ 1, 2\})$. If instead $|\gamma| = 4$, each tuple corresponds to one of $\binom{4}{2} = 6$ partitions which proves part \ref{item_34}.

Lastly, it follows from the definition of $SOS^{\mathbf{Q}}_p$, that any monomial in matrix $\mathbf{x} \mathbf{x}^\top$ is of degree at most four, which proves part \ref{item_gamma5} of the lemma.
\end{proof}

Part \ref{item_gamma5} of \Cref{lemma_cardXGammaUpperBounds} shows that the $SOS^{\mathbf{Q}}_p$ bases are only suited for the (MAX-)$k$-SAT when $k \leq 4$.

\section{Resolution and monomial bases}
\label{section_Resolution}
In this section, we consider resolution in combination with the SOS approach to the MAX-SAT. Resolution is a technique from mathematical logic, and widely employed by MAX-SAT solvers \cite{prasad2005survey}. Resolution takes as inputs two clauses of a proposition $\Logical$, and returns a set of new clauses, named the \textit{resolvent} clauses. The resolvent clauses transform $\Logical$ into $\Logical^\prime$, by either replacing the original clauses, or by adding the resolvent clauses to $\Logical$ (depending on which resolution rule is used). We show in this section that the MAX-SAT resolution rule might not be beneficial for the SOS approach applied to the MAX-SAT,  and can even decrease its effectiveness. However, in \Cref{subsection_SATresolutionRule} we show how to benefit from the SAT resolution rule in the partial MAX-SAT.

We show this at the hand of an example. For $k \geq 3$, we define the following proposition on $k$ variables
\begin{align}
    \label{eqn_logicalKresolutionExample}
    \Logical_k = \begin{cases} 
    \neg x_1 \wedge (x_1 \vee \neg x_2) \wedge (x_2 \vee x_3) \wedge \neg x_3 \hphantom{\wedge \Big[ \bigwedge_{j=4}^k (\neg x_{j-1} \vee x_{j}) \Big]} \quad \text{ if } k = 3, \\
    \neg x_1 \wedge (x_1 \vee \neg x_2) \wedge (x_2 \vee x_3) \wedge \Big[ \bigwedge_{j=3}^{k-1} (\neg x_{j} \vee x_{j+1}) \Big] \wedge \neg x_k \, \, \, \text{ else.}
    \end{cases}
\end{align}
It is clear that $\Logical_k$ is unsatisfiable. If one satisfies the initial two unit clauses and performs unit resolution, more unit clauses appear. Repeating this process will lead to an all \texttt{false} truth assignment, leaving clause $x_2 \vee x_3$ unsatisfied. Therefore, any truth assignment leaves at least one clause unsatisfied, and hence, $\min_{x \in \{ \pm 1\}^k}F_{\Logical_k} = F_{\Logical_k}(- \mathbf{1}_k) = 1$, for $F_{\Logical_{k}}$ as in \eqref{eqn_FbFunctionDefinition}.

In the following lemma, we show that the $SOS_p$ basis, see \eqref{eqn_sosPbasis}, suffices for proving optimality of this assignment.
\begin{lemma}
    \label{lemma_sosPboundStrength}
    For all $k \geq 3$, we have
    \begin{align}
        \label{eqn_sosPBoundStrength1}
        \max\{ \lambda \, | \, F_{\Logical_k} - \lambda \in \mathcal{V}_{\mathbf{x}} \} = 1,
    \end{align}
    where $\mathbf{x} = SOS_p(\Logical_k)$, and $\mathcal{V}_{\mathbf{x}}$ as in \eqref{eqn_VxSet}.
\end{lemma}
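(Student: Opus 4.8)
The plan is to prove the two inequalities separately. The upper bound $\max\{\lambda \mid F_{\Logical_k} - \lambda \in \mathcal{V}_{\mathbf{x}}\} \leq 1$ is immediate: by \eqref{eqn_PolySOS} this quantity is bounded above by $\min_{x \in \{\pm 1\}^k} F_{\Logical_k}$, which was just shown to equal $1$. The whole content of the lemma is therefore the reverse inequality, for which I would exhibit an explicit sum-of-squares certificate witnessing $F_{\Logical_k} - 1 \in \mathcal{V}_{\mathbf{x}}$ with $\mathbf{x} = SOS_p(\Logical_k)$.

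To build the certificate I would pass to the indicator variables $u_i := \tfrac{1}{2}(1 + x_i)$, which take values in $\{0,1\}$ on $\{\pm 1\}^k$ and satisfy $u_i^2 \equiv u_i \bmod I$. Reading off the falsification indicator of each clause from \eqref{eqn_FbFunctionDefinition} gives
\[
F_{\Logical_k} = u_1 + (1-u_1)u_2 + (1-u_2)(1-u_3) + \sum_{j=3}^{k-1} u_j(1 - u_{j+1}) + u_k ,
\]
where every summand is a product of $\{0,1\}$-valued factors. The key step is a telescoping simplification carried out purely by expanding these products (no reduction is needed, since no squared indicator ever arises), which I expect to collapse the constant term and yield the exact identity
\[
F_{\Logical_k} - 1 = u_1(1 - u_2) + u_2 u_3 + \sum_{j=4}^{k} u_j(1 - u_{j-1}) ,
\]
where for $k=3$ the trailing sum is empty. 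Each term on the right is again a product of idempotents, hence a $\{0,1\}$-valued polynomial $e$ on $\{\pm 1\}^k$.

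The decisive observation is that any such $e$ is a square modulo $I$: since $e$ and $e^2$ agree on $\{\pm 1\}^k$ and $I$ is the vanishing ideal of that set, $e \equiv e^2 \bmod I$; and writing $e = \mathbf{c}^\top \mathbf{x}$ as a linear combination of basis monomials gives $e^2 = \mathbf{x}^\top (\mathbf{c}\mathbf{c}^\top) \mathbf{x}$ with $\mathbf{c}\mathbf{c}^\top \succeq 0$. Summing these rank-one matrices over the three families of terms and using that $\mathcal{V}_{\mathbf{x}}$ is closed under addition (immediate from \eqref{eqn_VxSet}) then places $F_{\Logical_k} - 1$ in $\mathcal{V}_{\mathbf{x}}$. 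The only place where the specific basis matters is checking that each term is expressible in $SOS_p$: the factors $u_1(1-u_2)$, $u_2 u_3$ and $u_j(1-u_{j-1})$ require exactly the quadratic monomials $x_1 x_2$, $x_2 x_3$ and $x_{j-1} x_j$, and each of these variable pairs co-occurs in a clause of $\Logical_k$ (namely $x_1 \vee \neg x_2$, $x_2 \vee x_3$ and $\neg x_{j-1} \vee x_j$), so by \eqref{eqn_sosPbasis} every needed monomial lies in $SOS_p(\Logical_k)$.

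I expect the main obstacle to be essentially bookkeeping: verifying the telescoping cleanly for general $k$ while correctly handling the degenerate $k=3$ case, and confirming that the constant collapses to exactly $1$ so that no monomial outside $SOS_p$ is required. Once the displayed identity is checked, positive semidefiniteness of the certificate and membership of all monomials in the $SOS_p$ basis are both immediate, and the lemma follows from combining the two inequalities.
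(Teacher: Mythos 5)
Your proof is correct, and while it certifies the same two inequalities as the paper, the certificate itself is built along a genuinely different route. The paper proceeds by induction on $k$: the base case is the explicit two-square identity $F_{\Logical_3}-1 \equiv \big[(2+x_1+x_3-x_1x_2+x_2x_3)^2+(-x_1+2x_2+x_3+x_1x_2+x_2x_3)^2\big]/32 \bmod I$, and the inductive step absorbs the clauses added in passing from $\Logical_k$ to $\Logical_{k+1}$ into the single extra square $(1-x_k+x_{k+1}-x_kx_{k+1})^2/16$. You instead give a closed-form, non-inductive certificate: your telescoping identity $F_{\Logical_k}-1 = u_1(1-u_2)+u_2u_3+\sum_{j=4}^{k}u_j(1-u_{j-1})$ holds exactly as a multilinear polynomial identity (I verified it, including the empty-sum case $k=3$), each summand is $\{0,1\}$-valued on $\{\pm 1\}^k$ and hence congruent to its own square modulo the vanishing ideal $I$, and each such square equals $\mathbf{x}^\top\mathbf{c}\mathbf{c}^\top\mathbf{x}$ with all required monomials ($x_1x_2$, $x_2x_3$, $x_{j-1}x_j$) lying in $SOS_p(\Logical_k)$ by \eqref{eqn_sosPbasis}, since those variable pairs co-occur in clauses. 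It is worth noting that your idempotent observation actually explains the paper's inductive step: $(1-x_k+x_{k+1}-x_kx_{k+1})^2/16=\big[4(1-u_k)u_{k+1}\big]^2/16\equiv(1-u_k)u_{k+1}\bmod I$, which is precisely your term with $j=k+1$; so for $k>3$ the two certificates differ only in how the $k=3$ part is decomposed. What the paper's induction buys is brevity --- no general-$k$ identity needs verification, only one base case and one incremental step. What your argument buys is a uniform, transparent certificate for all $k$ at once, decomposed into rank-one squares, together with a structural reason why the $SOS_p$ basis suffices (clause falsification indicators are idempotents, and idempotents are squares modulo $I$) --- an observation that generalizes beyond this particular family of instances.
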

\begin{proof}
Since $\min_{x \in \{ \pm 1\}^k}F_{\Logical_k} = 1$, $\nexists \lambda > 1$ such that $F_{\Logical_k} - 
\lambda \in \mathcal{V}$, see \eqref{eqn_Putinar}. As $\mathcal{V}_\mathbf{x} \subseteq \mathcal{V}$, this implies that $\nexists \lambda > 1$ such that $F_{\Logical_k} - 
\lambda \in \mathcal{V}_\mathbf{x}$. Thus, to prove \eqref{eqn_sosPBoundStrength1} it suffices to show that $F_{\Logical_k} - 
1 \in \mathcal{V}_\mathbf{x}$, for all $k \geq 3$.

We prove this by induction. For the base case $k=3$, we have
\begin{align}
     F_{\Logical_3} &= (6 + x_1 + x_3 - x_1 x_2 + x_2 x_3)/4 \\
     &\equiv 1+\Big[ (2+x_1+x_3-x_1 x_2 + x_2 x_3)^2 + (-x_1 + 2x_2 + x_3 +x_1 x_2 + x_2 x_3)^2 \Big] / 32 \mod I.
\end{align}
Clearly, the monomials appearing in the above decomposition are contained in $SOS_p(\Logical_k)$. Therefore, $F_{\Logical_3} - 1 \in \mathcal{V}_\mathbf{x}$. Now for $k+1$, we have
\begin{align}
    F_{\Logical_{k+1}} &= F_{\Logical_{k}} + (1-x_{k}+x_{k+1}-x_k x_{k+1})/4 \\
    &\equiv 1+(F_{\Logical_{k}}-1)  + (1-x_{k}+x_{k+1}-x_k x_{k+1})^2/16 \mod I.
\end{align}
By the induction hypothesis, $F_{\Logical_{k}}-1 \in \mathcal{V}_\mathbf{x}$, and so it follows that $F_{\Logical_{k+1}}-1 \in \mathcal{V}_\mathbf{x}$ as well.
\end{proof}

Let us now present the MAX-SAT resolution rule, see e.g., \cite{abrame2014ahmaxsat}. For clauses $C_1$ and $C_2$ of some proposition $\Logical$, on literals $x$, $z_i$, $i \in [s]$ and $y_i$, $i \in [t]$ construct the clauses below the horizontal line:
\begin{equation}
    \label{eqn_maxSATresolution}
  \begin{gathered}
    C_1 = [x \vee z_1 \vee \ldots \vee z_s], \, \, C_2 = [\neg x \vee y_1 \vee \ldots \vee y_t]  \\
    \hline
    z_1 \vee \ldots \vee z_s \vee y_1 \vee \ldots \vee y_t, \\
    [C_1 \vee \neg y_1 \vee y_2 \vee \ldots \vee y_t], \, [C_1 \vee \neg y_2 \vee y_3 \vee \ldots \vee y_t], \ldots, \, [C_1 \vee \neg y_t], \\
    [C_2 \vee \neg z_1 \vee z_2 \vee \ldots \vee z_s], \, [C_2 \vee \neg z_2 \vee z_3 \vee \ldots \vee z_s], \ldots, \, [C_2 \vee \neg z_s].
  \end{gathered}
\end{equation}
The MAX-SAT resolution rule states that one may replace clauses $C_1$ and $C_2$ in $\Logical$ with the $1+s+t$ resolvent clauses below the horizontal line. We refer to the resulting new proposition, obtained after resolution, as $\Logical^\prime$. In \cite{bonet2007resolution}, Theorem 4, it is proven that any truth assignment leaves the same number of clauses unsatisfied for $\Logical$ and $\Logical^\prime$. This is referred to as \textit{soundness} of the MAX-SAT resolution rule. By soundness and the definition of $F_\Logical$, see \eqref{eqn_FbFunctionDefinition}, it follows that $F_\Logical = F_{\Logical^\prime}$.

For standard MAX-SAT solvers, one of the goals of resolution is to create new unit clauses, which are used to compute upper bounds on the MAX-SAT solution \cite{abrame2014ahmaxsat}. For our SDP approach, if given the same basis, programme \ref{eqn_SumSquaresExplicit} equals programme $\texttt{P}_{\Logical^\prime}$, as $F_{\Logical} = F_{\Logical^\prime}$. This seems to suggest that MAX-SAT resolution does not change our approach, however, we find that in general $SOS_p(\Logical) \neq SOS_p(\Logical^\prime)$, see~\eqref{eqn_sosPbasis}. We investigate the effect of this difference.

Returning to the example of $\Logical_k$ in \eqref{eqn_logicalKresolutionExample}, let us define $C_q = \neg x_q \vee x_{q+1}$. Observe that for $3 \leq q \leq k-1$, $C_q \in \Logical_k$ (assuming $k > 3$). Let us fix some $q$, $3 \leq q \leq k-3$, and consider the clauses $C_q, \, C_{q+1}, \, C_{q+2} \in \Logical_k$.
We perform resolution as:
\begin{equation}
\label{eqn_resolutionTwoClauses}
  \begin{gathered}
    C_q = [\neg x_q \vee x_{q+1} ], \, \, C_{q+1} = [\neg x_{q+1} \vee x_{q+2}]  \\ \hline   
     [\neg x_q \vee x_{q+2}], \, \, [\neg x_q \vee x_{q+1} \vee \neg x_{q+2}], \, \, [x_q \vee \neg x_{q+1} \vee x_{q+2}].
  \end{gathered}
\end{equation}
We perform resolution again, on the third new clause obtained in \eqref{eqn_resolutionTwoClauses} and $ C_{q+2}$, to obtain:
\begin{equation}
\label{eqn_resolutionTwoClausesPart2}
  \begin{gathered}
   [x_q \vee \neg x_{q+1} \vee x_{q+2}], \, \, C_{q+2} = [\neg x_{q+2} \vee x_{q+3}]  \\ \hline   
     [x_q \vee \neg x_{q+1} \vee x_{q+3}], \, \, [x_q \vee \neg x_{q+1} \vee x_{q+2} \vee \neg x_{q+3}], \\
     [\neg x_q \vee \neg x_{q+1} \vee \neg x_{q+2} \vee x_{q+3}], \, \,[x_{q+1} \vee \neg x_{q+2} \vee x_{q+3}], \, \,  
  \end{gathered}
\end{equation}
The resolution rule states that one may replace the original clauses $C_1$, $C_2$ and $C_3$ with the 6 new resolvent clauses obtained from \eqref{eqn_resolutionTwoClauses} and \eqref{eqn_resolutionTwoClausesPart2} (the third resolvent from \eqref{eqn_resolutionTwoClauses} is not counted, since it is replaced in the resolution in \eqref{eqn_resolutionTwoClausesPart2}). 

Observe that the $SOS_p$ basis generates 6 quadratic monomials for the new resolvent clauses, while originally, only 3 quadratic monomials are generated for $C_q$, $C_{q+1}$ and $C_{q+2}$. We now define, $\Logical^\prime_k$ for $k \geq 6$, 
as the logical proposition, obtained by taking $\Logical_k$, and performing resolution as in \eqref{eqn_resolutionTwoClauses} and \eqref{eqn_resolutionTwoClausesPart2}, for each triple of clauses $\{ C_q, \, C_{q+1}, \, C_{q+2} \}$, for each $q \in \{3, \, 6, \, 9, \ldots, k-3 \}$ (let us assume here that $k$ is a multiple of 3). Note that proposition $\Logical_{k}^\prime$ constitutes a MAX-4-SAT instance, and therefore basis $SOS_p$ is applicable. Let us compare the sizes of the resulting $SOS_p$ bases, denoted as $|SOS_p|$. We have
 \begin{align}
    |SOS_p(\Logical_{k})| = 2k < 3k-3 = |SOS_p(\Logical^\prime_{k})|.
\end{align}
Thus, compared to $SOS_p(\Logical_{k})$, basis $SOS_p(\Logical^\prime_{k})$ adds approximately $k$ monomials. None of these monomials strengthen the bound, since $SOS_p(\Logical_k)$ is already sufficient for proving optimality, by \Cref{lemma_sosPboundStrength}. It is clear that having a larger basis without offering a stronger bound is inefficient, since solving \ref{eqn_SumSquaresExplicit} requires more time for larger matrices.

The example of $\Logical^\prime_k$ and $\Logical_k$ shows that not all monomials are (equally) useful in determining bounds. It also shows that resolution can decrease the effectiveness of the SOS approach to the MAX-SAT, by providing `bad'\ monomial bases, or it can occur that the $SOS_p$ basis misses `good'\ monomials. Our proposed basis $SOS^\mathbf{Q}_p$, see \eqref{eqn_sosPQBasis}, attempts to solve this issue.

\section{Relating sum of squares and method of moments}
\label{section_dualRelation}
In this section, we show how the SOS-SDP relaxation of \citeauthor{van2008sums} \cite{maaren2005sums,van2008sums} and moment relaxations of \citeauthor{anjos2004proofs} \cite{anjos2004proofs}-\cite{anjos2006semidefinite} are related. The relaxations of \citeauthor{anjos2004proofs}, as described in \Cref{section_SATasSDPfeas}, were first introduced in \citeyear{anjos2004semidefinite} \cite{anjos2004semidefinite} and can be considered as extensions of the \ref{GapRelax} relaxation via the well-known Lasserre hierarchy \cite{lasserre2001global}. In \citeyear{maaren2005sums}, \citeauthor{van2008sums} \cite{maaren2005sums,van2008sums} proposed the SOS approach to the \mbox{(MAX-)SAT}. Subsequently, \citeauthor{van2008sums} \cite{van2008sums} showed that the SOS relaxation, using monomial basis $SOS_{pt}$ that is larger than $SOS_p$, see \eqref{eqn_sosPbasis},  outperforms the $R_3$ relaxation of \citeauthor{anjos2005improved} \cite{anjos2005improved}, in deciding on the satisfiability of 3-SAT instances. The  $R_3$ relaxation is known to dominate the $R_2$ relaxation, see \eqref{eqn_AnjosFset}. In \citeyear{anjos2008extended}, 
\citeauthor{anjos2008extended} \cite{anjos2008extended} strengthened his $R_3$ relaxation further and left it as future work to determine which SDP relaxation was the strongest.

We complete that work here, by showing a simple relation between the two approaches. In particular, Anjos' relaxations can be considered as method of moments in the Lasserre hierarchy. It is well-known that method of moments is dual to SOS optimization, see \cite{lasserre2001global}, and we work out the details here. Let us first derive the dual of the SOS programme~\ref{eqn_SumSquaresExplicit}, see  \Cref{thm_dualEquivalence}, and then relate it to the here proposed strengthened  version of Anjos' relaxations.

To this end, we require the following intermediate result on $v_\Logical$, see \eqref{eqn_vLogicalDef}.
\begin{lemma}
    \label{lemma_GoemansAndSOS}
        Let $\Logical = \bigwedge_{j = 1}^m C_j$ be a logical proposition, $v_\Logical=\sum_{\alpha \subseteq [n]} v^\alpha_\Logical x^\alpha$, see \eqref{eqn_vLogicalDef}, and $F_\Logical = \sum_{\alpha \subseteq [n]} p^{\alpha}_\Logical x^\alpha$, see \eqref{eqn_polynomialDef}. Then,
    \begin{align}
        v_\Logical = m - F_\Logical.
    \end{align}
    and $v_\Logical^\alpha = - p^\alpha_\Logical$ for all nonempty $\alpha \subseteq [n]$.
\end{lemma}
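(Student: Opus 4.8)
The plan is to establish the identity $v_\Logical = m - F_\Logical$ by comparing the two polynomials clause by clause, and then to read off the coefficient relation by matching monomials. The key observation is that the $j$th summand of $F_\Logical$ in \eqref{eqn_FbFunctionDefinition} is precisely $1 - v(C_j)$, with $v(C_j)$ given by \eqref{eqn_valueProp}.

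First I would expand the product defining the contribution of clause $C_j$ to $F_\Logical$. Distributing over the $\ell_j$ factors $(1 - a_{j,i}x_i)$, $i \in C_j$, and using the notation of \eqref{eqn_xAlphaDef}, one obtains
\[
\prod_{i \in C_j}(1 - a_{j,i}x_i) = \sum_{\gamma \subseteq C_j} (-1)^{|\gamma|}\, a_j^\gamma\, x^\gamma,
\]
since selecting the term $-a_{j,i}x_i$ from exactly the factors indexed by $\gamma$ produces the sign $(-1)^{|\gamma|}$, the product $a_j^\gamma$ of the relevant entries of $a_j$, and the monomial $x^\gamma$. Dividing by $2^{\ell_j}$ and comparing with the right-hand side of \eqref{eqn_valueProp} then shows
\[
\frac{1}{2^{\ell_j}}\prod_{i \in C_j}(1 - a_{j,i}x_i) = 2^{-\ell_j}\sum_{\gamma \subseteq C_j}(-1)^{|\gamma|}\, a_j^\gamma\, x^\gamma = 1 - v(C_j).
\]

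Summing this identity over all $j \in [m]$ and invoking the definitions \eqref{eqn_FbFunctionDefinition} of $F_\Logical$ and \eqref{eqn_vLogicalDef} of $v_\Logical$ yields $F_\Logical = \sum_{j}\bigl(1 - v(C_j)\bigr) = m - v_\Logical$, which is the first claim. For the coefficient identity, I would compare the monomial expansions $v_\Logical = \sum_\alpha v_\Logical^\alpha x^\alpha$ and $F_\Logical = \sum_\alpha p_\Logical^\alpha x^\alpha$ term by term in the relation $v_\Logical = m - F_\Logical$. Since the constant $m$ contributes only to the coefficient of $x^\emptyset$, every monomial with $\alpha \neq \emptyset$ satisfies $v_\Logical^\alpha = -p_\Logical^\alpha$, while the empty set gives $v_\Logical^\emptyset = m - p_\Logical^\emptyset$, consistent with \eqref{eqn_constantTermF}.

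There is no genuine obstacle beyond bookkeeping: the only point requiring care is the expansion of the product and the recognition that it reproduces exactly the alternating sum appearing in \eqref{eqn_valueProp}, so that the per-clause cancellation against the leading $1$ is exact. Because both polynomials are expressed in the reduced monomial basis $\{ x^\alpha \mid \alpha \subseteq [n]\}$ (all $x_i^2 \equiv 1$ modulo $I$), the coefficient matching is unambiguous and the two claims follow immediately.
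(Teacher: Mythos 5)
Your proposal is correct and follows essentially the same route as the paper's proof: both rest on the per-clause identity $1 - v(C_j) = 2^{-\ell_j}\prod_{i \in C_j}(1 - a_{j,i}x_i)$ from \eqref{eqn_valueProp}, summed over $j \in [m]$, with the coefficient relation then read off because the constant $m$ only affects the $x^\emptyset$ term. The only difference is that you re-derive the product expansion explicitly, whereas the paper invokes \eqref{eqn_valueProp} directly; this is purely a matter of detail, not of substance.
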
 
\begin{proof}
    Let clause $C_j$ have length $\ell_j$. We have $v(C_j) =1 - 2^{-\ell_j} \prod_{i \in C_j} (1 - a_{j,i} x_i)$, see \eqref{eqn_valueProp}. Summing over all clauses yields the desired result.
\end{proof}
Let $\mathbf{x}$ be a given monomial basis, $S \subseteq \mathcal{S}^{|\mathbf{x}|}$. Matrix $S$ is indexed by all $\alpha \subseteq [n]$ for which $x^\alpha \in \mathbf{x}$. To simplify the comparison between the SOS approach and the relaxations of \citeauthor{anjos2004proofs} \cite{anjos2004proofs}, we define the set
\begin{align}
    \label{eqn_mathcalXset}
    \mathcal{X}_\Logical := \{ S \in \mathcal{S} \, | \, \text{diag}(S) = \mathbf{1}, \,  S_{\alpha, \beta} = S_{\alpha^\prime, \beta^\prime } \, \, \forall(\alpha,\beta,\alpha^\prime, \beta^\prime) \subseteq [n] \text{ such that } \alpha \triangle \beta = \alpha^\prime \triangle \beta^\prime \},
\end{align}
for a proposition $\Logical$ on $n$ variables and $m$ clauses. Note that $\mathcal{X}_\Logical \subseteq \cap_{j \in [m]} \Delta_j$, see \eqref{eqn_symmDiffPodd}, since $\Delta_j$ only restricts entries $S_{\alpha, \beta}$ whenever $\alpha$ and $\beta$ are jointly contained in a single clause. We use $\mathcal{X}_\Logical$ in the following theorem.

\begin{theorem}
\label{thm_dualEquivalence}
Let $\Logical$ be a logical proposition and $\mathbf{x}$ a monomial basis. The SOS programme \ref{eqn_SumSquaresExplicit} defined by $\Logical$ and $\mathbf{x}$, is equivalent to
\begin{equation}
\label{eqn_SumSquaresDual}
\begin{aligned}
\mathrm{max} \quad & \langle C, S \rangle \\
\mathrm{s.t.} \quad & S \in \mathcal{X}_\Logical \cap \mathcal{S}_+,
\end{aligned}
\end{equation}
where $\mathcal{X}_\Logical$ is given by \eqref{eqn_mathcalXset} and $C \in \mathcal{S}^{|\mathbf{x}|}$, indexed by the subsets $\alpha \subseteq [n]$ for which $x^\alpha \in \mathbf{x}$, is any matrix which satisfies
\begin{align}
    \sum_{(\alpha, \beta) \in \mathbf{x}^\gamma} C_{\alpha, \beta} =  v^\gamma_\Logical, \,\, \forall \gamma \neq \emptyset, \mathbf{x}^\gamma \neq \emptyset
\end{align}
for $v^\gamma_\Logical$ as in \eqref{eqn_vLogicalDef}. Moreover, \eqref{eqn_SumSquaresDual} is strictly feasible.
\end{theorem}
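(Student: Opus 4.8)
The plan is to prove this theorem by taking the Lagrangian dual of the SOS programme \ref{eqn_SumSquaresExplicit} and then identifying the dual with \eqref{eqn_SumSquaresDual} via the relation $v_\Logical = m - F_\Logical$ from \Cref{lemma_GoemansAndSOS}. First I would set up the primal \ref{eqn_SumSquaresExplicit}, which minimizes $\langle I, M \rangle$ over $M \in \mathcal{M}_\Logical \cap \mathcal{S}_+$. The feasible set $\mathcal{M}_\Logical$ is an affine space carved out by the coefficient matching conditions $\sum_{(\alpha,\beta) \in \mathbf{x}^\gamma} M_{\alpha,\beta} = p_\Logical^\gamma$ for each nonempty $\gamma$ with $x^\gamma \in \mathbf{X}$, see \eqref{eqn_mathcalMset}. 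I would introduce a scalar multiplier $S_\gamma$ for each such equality constraint and a matrix multiplier $S \in \mathcal{S}_+$ for the conic constraint, then form the Lagrangian and compute the dual. Since \ref{eqn_SumSquaresExplicit} is strictly feasible (the excerpt notes $M + I$ is strictly feasible), strong duality holds and the optimal values coincide, giving equivalence.

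The key bookkeeping step is recognizing the structure of the dual variable. The multiplier $S$ dual to the coefficient-matching constraints naturally assembles into a single symmetric matrix indexed by $\mathbf{x}$, where the entry $S_{\alpha,\beta}$ is the multiplier associated to the monomial $x^{\alpha \triangle \beta}$; because all index pairs $(\alpha,\beta)$ sharing the same symmetric difference $\gamma$ receive the \emph{same} multiplier value, the dual matrix automatically satisfies the equal-entries condition defining $\mathcal{X}_\Logical$ in \eqref{eqn_mathcalXset}. The normalization constraint $\mathrm{diag}(S) = \mathbf{1}$ should emerge from the coefficient on the objective $\langle I, M \rangle$: the objective fixes the $\emptyset$-coefficient contribution, and stationarity of the Lagrangian in $M$ forces the positive semidefinite dual slack to equal $C - S$ (for an appropriate cost matrix $C$), which is feasible precisely when $S \in \mathcal{S}_+$ and the diagonal is pinned to one. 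I would then use $v_\Logical^\gamma = -p_\Logical^\gamma$ from \Cref{lemma_GoemansAndSOS} to flip the sign of the coefficient data, converting the minimization of $\langle I, M\rangle$ into the maximization of $\langle C, S\rangle$ and matching the constraint data $\sum_{(\alpha,\beta)\in \mathbf{x}^\gamma} C_{\alpha,\beta} = v_\Logical^\gamma$ stated in the theorem.

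For the strict feasibility of \eqref{eqn_SumSquaresDual}, I would exhibit an explicit interior point. The identity matrix $I_{|\mathbf{x}|}$ lies in $\mathcal{X}_\Logical$ (its diagonal is $\mathbf{1}$ and all off-diagonal entries are zero, trivially satisfying the equal-entries conditions) and is positive definite, hence strictly feasible; if the cost structure or the rank-deficiency of the symmetric-difference pattern complicates this, a small perturbation $I + \varepsilon S_0$ for a feasible $S_0 \in \mathcal{X}_\Logical$ with $\varepsilon$ small keeps positive definiteness while remaining in the affine slice, so strict feasibility follows.

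The main obstacle I anticipate is the careful accounting in the duality derivation: the coefficient matching conditions in \eqref{eqn_mathcalMset} are indexed by $\gamma$ rather than by individual matrix entries, so each scalar multiplier must be correctly ``broadcast'' to all entries $(\alpha,\beta) \in \mathbf{x}^\gamma$ when forming the dual slack matrix, and one must verify that this broadcasting is exactly what produces membership in $\mathcal{X}_\Logical$. Getting the correspondence between the primal constraint data $p_\Logical^\gamma$ and the dual objective data $v_\Logical^\gamma$ right — including the sign flip and the separate treatment of the empty set $\gamma = \emptyset$ (which is excluded from $\mathcal{M}_\Logical$ but governs the objective) — is where the bulk of the technical care is required. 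The positive semidefiniteness and the passage to strong duality are comparatively routine once the strict feasibility of \ref{eqn_SumSquaresExplicit} is invoked.
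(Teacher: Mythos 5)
Your proposal is correct and follows essentially the same route as the paper: a Lagrangian duality argument in which the multipliers of the coefficient-matching conditions are broadcast across all index pairs sharing the same symmetric difference (which, together with the diagonal being pinned to $\mathbf{1}$ by the identity objective, is exactly membership in $\mathcal{X}_\Logical$), the sign flip $v^\gamma_\Logical = -p^\gamma_\Logical$ from \Cref{lemma_GoemansAndSOS} converts the primal data into the dual cost, and the identity matrix witnesses strict feasibility of \eqref{eqn_SumSquaresDual}. The only mechanical difference is that the paper arrives at the dual by splitting $M = Z$ as in \eqref{eqn_prsmDual1} and dualizing the coupling constraint, recovering $\mathcal{X}_\Logical$ and $C = -\mathcal{P}_{\mathcal{M}_\Logical}(\mathbf{0})$ as the conditions under which the inner minimization over $\mathcal{M}_\Logical$ stays bounded, whereas you dualize the affine constraints directly with scalar multipliers per class $\gamma$ -- the same computation organized differently, so your plan (modulo tightening the imprecise phrase about the dual slack equalling $C - S$; the slack is $S$ itself) matches the paper's proof.
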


\begin{proof}
    We rewrite programme \ref{eqn_SumSquaresExplicit} by splitting the matrix variable $M$ as follows
    \begin{equation}
\label{eqn_prsmDual1}
\begin{aligned}
v := \, \, &\text{min}  && \langle I, M \rangle \\
&\textrm{s.t.} &&  M = Z, \, M \in \mathcal{M}_{\Logical}, \, Z \in \mathcal{S}_+, 
\end{aligned}
\end{equation}
where $\mathcal{M}_\Logical$ as in \eqref{eqn_mathcalMset}. We dualize the constraint $M = Z$, and set
\begin{align}
    g(S) := \min_{M \in \mathcal{M}_\Logical, \, Z \succeq 0} \langle I, M \rangle + \langle S, M - Z \rangle,
\end{align}
for some $S \in \mathcal{S}$. Clearly, $g(S) \leq v$ for all $S$, and we thus look to maximize $g(S)$, i.e.,
\begin{align}
    \max_{S} g(S) &= \max_S \Big[  \min_{M \in \mathcal{M}_\Logical}  \langle I+S, M \rangle + \min_{Z \succeq 0} \, \langle S, -Z \rangle \Big] 
    = \max_{S \preceq \, 0} \phantom{\Big[} \min_{M \in \mathcal{M}_\Logical}  \langle I+S, M \rangle \nonumber \\
    \label{eqn_dualProof1}
    &= \max_{S \succeq \, 0} \phantom{\Big[} \min_{M \in \mathcal{M}_\Logical}  \langle I-S, M \rangle.
\end{align}
We now determine the set $\mathcal{X}_\Logical$ such that, whenever $S \in \mathcal{X}_\Logical$, the minimization over $M \in \mathcal{M}_\Logical$ in \eqref{eqn_dualProof1} is bounded. Observe that $\mathcal{M}_\Logical$ places no restrictions on the diagonal. To guarantee a bounded minimum, set $\mathcal{X}_\Logical$ should restrict $\text{diag}(I-S) = \mathbf{0}$. Each off-diagonal element of a matrix in $\mathcal{M}_\Logical$ is restricted by a single constraint of the form \eqref{eqn_xMxExplicitCons}. Therefore, solving \eqref{eqn_dualProof1} for $M$ can be done by considering separately the elements of $M$ restricted by a single constraint. That is,
\begin{equation}
\begin{aligned}
&\min_{M\in \mathcal{S}}  && \sum_{\gamma \in \mathbf{X}} \, \, \sum_{(\alpha, \beta) \in \mathbf{x}^\gamma} -S_{\alpha, \beta} M_{\alpha, \beta} \\
&\textrm{s.t.}  && \sum_{(\alpha, \beta) \in \mathbf{x}^\gamma} M_{\alpha, \beta} = p^\gamma_\Logical,
\end{aligned}
\end{equation}
where $\mathbf{x}^\gamma$ and $\mathbf{X}$ are defined in \eqref{eqn_MgammaDef} and \eqref{eqn_xxTmod}, respectively. This minimization problem is bounded if and only if
\begin{align}
    \label{eqn_xGammaEquivalence}
    S_{\alpha, \beta} = S_{\alpha^\prime, \beta^\prime }, \,\, \forall (\alpha, \beta), \, (\alpha^\prime, \beta^\prime) \in \mathbf{x}^\gamma \, \, \forall x^\gamma \in \mathbf{X},
\end{align}
or equivalently, $S_{\alpha, \beta} = S_{\alpha^\prime, \beta^\prime }$ for all possible index pairs $(\alpha, \beta)$ and $(\alpha^\prime, \beta^\prime)$ that satisfy $\alpha \triangle \beta = \alpha^\prime \triangle \beta^\prime$.
It follows that $\mathcal{X}_\Logical$ is given by \eqref{eqn_mathcalXset}. Now, for fixed $S \in \mathcal{X}_\Logical \cap \mathcal{S}_+$, any matrix $M \in \mathcal{M}_\Logical$ obtains the same value in \eqref{eqn_dualProof1}. Note also that w.l.o.g., we may fix $M = \mathcal{P}_{\mathcal{M}_\Logical}(\mathbf{0})$, i.e., the projection of the zero matrix onto $\mathcal{M}_\Logical$, (see \Cref{lemma_linearProjectionLemma}) which has zero diagonal. This yields the equivalent programme of the form \eqref{eqn_SumSquaresDual}, for $C = -  M = -\mathcal{P}_{\mathcal{M}_\Logical}(\mathbf{0})$. Written explicitly,
\begin{align}
    C_{\alpha, \beta} = -\frac{p^\gamma_\Logical}{| \mathbf{x}^\gamma | }, \, \,\, \forall \alpha, \beta \subseteq [n] \text{ such that } \alpha \triangle \beta = \gamma \text{ (i.e., $(\alpha, \, \beta) \in \mathbf{x}^\gamma$)}.
\end{align}
This combined with \Cref{lemma_GoemansAndSOS}, proves the claim on matrix $C$. Lastly, observe that the identity matrix of appropriate size is strictly feasible for \eqref{eqn_SumSquaresDual}.
\end{proof}

We define, for $S \in \mathcal{X}_\Logical$ and each clause $C_j$, the function $v^{\text{SDP}}(S, C_j)$, which is obtained by taking \eqref{eqn_valueProp}, and replacing each  $x^\gamma$ by $S_{\alpha, \beta}$, for some $(\alpha, \beta) \in \mathbf{x}^\gamma$. By \eqref{eqn_xGammaEquivalence}, we are allowed to pick any such $(\alpha, \beta)$.
By \Cref{lemma_GoemansAndSOS}, for any nonempty $\gamma \subseteq [n]$, $S \in \mathcal{X}_\Logical$ and $C$ as in \Cref{thm_dualEquivalence}, we have
\begin{align}
    \sum_{(\alpha, \beta) \in \mathbf{x}^\gamma} C_{\alpha, \beta} S_{\alpha, \beta} = \sum_{(\alpha, \beta) \in \mathbf{x}^\gamma} \frac{-p^\gamma_\Logical}{|\mathbf{x}^\gamma|} S_{\alpha, \beta} = -p^\gamma_\Logical S_{\alpha, \beta} = v^\gamma_\Logical S_{\alpha, \beta}.
\end{align}
Hence, maximizing $\langle C, S \rangle$ is equivalent to maximizing the semidefinite relaxation of $v_\Logical$, see \eqref{eqn_vLogicalDef}, which equals $\sum_{j \in [m]} v^\text{SDP}(S,C_j)$.

Moreover, in the relaxations of  \citeauthor{anjos2004semidefinite} \cite{anjos2004proofs, anjos2004semidefinite, anjos2005improved, anjos2006semidefinite, anjos2008extended}, outlined in \Cref{section_SATasSDPfeas}, the matrix variable is restricted to satisfy $v^\text{SDP}(S,C_j) = 1$. Now we can easily observe the difference between the SOS-SDP relaxations and those proposed by \citeauthor{anjos2004proofs}. We present the equivalent dual formulation of the SOS approach below on the left-hand side and the latter (in slightly adapted form) on the right. \\

\noindent \begin{minipage}{.5\linewidth}
\begin{equation}
\begin{aligned}
\label{eqn_sumOfSquaresComparison}
v^*=\, &\text{max}  && \sum_{j \in [m]} v^\text{SDP}(S, C_j)  \\
&\textrm{s.t.} && S \in \mathcal{X}_{\Logical} \cap \mathcal{S}_+. \, 
\end{aligned}
\end{equation}
\end{minipage}
\begin{minipage}{.5\linewidth}
\begin{equation}
\begin{aligned}
\label{eqn_AnjosComparison}
&\text{max}  && 0 \\
&\textrm{s.t.}   && S \in \mathcal{X}_{\Logical} \cap \mathcal{S}_+, \\
& && v^\text{SDP}(S, C_j) = 1, \, \forall C_j.
\end{aligned}
\end{equation}
\end{minipage} \\

Note again the difference between \eqref{eqn_AnjosComparison} and the relaxations described in \Cref{section_SATasSDPfeas}, resulting from using set $\mathcal{X}_\Logical$ instead of the intersection of the $\Delta_j$, see \eqref{eqn_symmDiffPodd}. Thus, we compare the SOS approach with a strengthened variant of the relaxation proposed by \citeauthor{anjos2004proofs}. In \Cref{section_partialMSduality}, we determine the dual of \eqref{eqn_AnjosComparison}.

Programme \eqref{eqn_sumOfSquaresComparison} proves unsatisfiability of $\Logical$ if $v^* < m$ (with some margin of error, due to numerical precision), while \eqref{eqn_AnjosComparison} does so whenever the programme is infeasible. The above programmes are not equivalent in this sense: we have empirically found instances $\Logical$ for which $v^* \geq m$, while \eqref{eqn_AnjosComparison} is infeasible. Neither programme can directly prove satisfiability. However, solutions to both programmes can be used to guide the search towards satisfying assignments (should they exist), see \Cref{subsection_lowerBoundsAndRounding}.

If \eqref{eqn_AnjosComparison} admits a feasible matrix $S^*$, then matrix $S^*$ is clearly also feasible for \eqref{eqn_sumOfSquaresComparison} and attains an objective value of $m$. Consequently, in this case, we have $v^* \geq m$. Thus, if \eqref{eqn_AnjosComparison} does not prove unsatisfiability of $\Logical$, then neither does \eqref{eqn_sumOfSquaresComparison}. In \Cref{section_NumericalResults} we show that \eqref{eqn_sumOfSquaresComparison} can be computed efficiently by applying the PRSM to its dual\footnote{Programme \eqref{eqn_sumOfSquaresComparison} can also be directly solved with the PRSM, as projecting onto $\mathcal{X}_\Logical$ is computationally cheap.}. It is currently unclear whether a good algorithm for solving \eqref{eqn_AnjosComparison} exists, and if so, how efficient it would be. Previous numerical experiments on \eqref{eqn_AnjosComparison} have used general purpose SDP solvers. An immediate improvement might be to use an SDP feasibility problem solver, see \cite{drusvyatskiy2017note, henrion2011projection}.

 Lastly, the objective value of \eqref{eqn_sumOfSquaresComparison} is more useful for the \mbox{MAX-SAT}: if the underlying instance is infeasible, $v^*$ provides an upper bound to the number of satisfiable clauses, which is useful in a B\&B scheme. Programme \eqref{eqn_AnjosComparison} might also show unsatisfiability of the same instance, but its infeasibility offers no additional value, as to \textit{how} unsatisfiable the instance is.

\section{The Peaceman-Rachford splitting method for the MAX-SAT}
\label{section_PRSMsection}
In this section, we introduce the \textit{Peaceman-Rachford splitting method} 
\cite{peaceman1955numerical} for solving SOS-SDP problems and apply it to the \mbox{MAX-SAT} SOS programme \ref{eqn_SumSquaresExplicit}. Conventionally, interior point methods are used to solve SDP problems. However, for medium and large size instances, interior point methods suffer from a large computation time and memory demand, which has recently motivated researchers to consider first order methods, such as the PRSM. For recent applications of PRSM to SDP, see e.g.,~\cite{de2021sdp, graham2022restricted}.

\Cref{subsection_SosSatUpperBounds} and \Cref{subsection_lowerBoundsAndRounding}
provide details on obtaining valid upper and lower bounds,  from the output of the PRSM algorithm.

\subsection{The PRSM for SOS relaxations of the MAX-SAT}
We start from the reformulation of \ref{eqn_SumSquaresExplicit} given in \eqref{eqn_prsmDual1}. The augmented Lagrangian function of~\eqref{eqn_prsmDual1} w.r.t.~the constraint $M = Z$
for a penalty parameter $\beta>0$ is:
\begin{align}
    \label{eqn_augmentedLagrangian}
    L_\beta(Z,M,S) = \langle I, M \rangle + \langle S, M - Z \rangle + \frac{\beta}{2} \| M - Z \|^2_F.
\end{align}
Here,  $S \in \mathcal{S}^n$ is the Lagrange multiplier and $\| \cdot \|_F$ denotes the Frobenius matrix norm, see \eqref{eqn_FrobeniusNorm}.

The PRSM now entails iteratively optimizing over the variables $Z$ and $M$ separately, and updating $S$ twice per cycle. We write superscript $k$ to denote the the value of the variable at iteration $k$. 
\begin{equation}
\label{eqn_PRSMiterates}
\left\lbrace\,
\begin{array}{@{}r@{\quad}l@{}l@{}}
&Z^{k+1} &= \argmin_{Z \succeq 0} L_\beta(Z,M^k,S^k) \hspace{4em} = \mathcal{P}_{\mathcal{S}_+} \left (M^k + \frac{1}{\beta}S^k \right ),\\
    &S^{k+1/2} \, &= S^k + \gamma_1 \beta (M^k - Z^{k+1}),\\
    &M^{k+1} &= \argmin_{M \in \mathcal{M}_\Logical} L_\beta(Z^{k+1},M,S^{k+1/2}) = \mathcal{P}_{\mathcal{M}_\Logical} \left ( Z^{k+1} - \frac{1}{\beta} \big[ I + S^{k+1/2} \big] \right ), \\
    &S^{k+1} &= S^{k+1/2} + \gamma_2 \beta (M^{k+1} - Z^{k+1}).
\end{array}
\right.
\end{equation}
Here, $\mathcal{M}_\Logical$ is as in \eqref{eqn_mathcalMset}, and $\mathcal{P}$ is the projection operator as in \eqref{eqn_ProjectionOperatorDef}. We have used that
\begin{equation}
\begin{aligned}
\label{eqn_argminAdaptedPRSM}
    \argmin_{Z \succeq 0} L_{\beta}(Z,M,S) &= \argmin_{Z \succeq 0} ~\langle I, M \rangle  - \frac{1}{2 \beta} \Big\| S \Big\|^2_F + \frac{\beta}{2} \Big\| Z - (M + \frac{1}{\beta} S) \Big\|^2_F \\
    &= \argmin_{Z \succeq 0}  ~\frac{\beta}{2} \Big\| Z - (M + \frac{1}{\beta} S) \Big\|^2_F,
\end{aligned}
\end{equation}
see e.g.,~\cite{oliveira2018admm}. In an implementation of \eqref{eqn_PRSMiterates}, one should not store matrix $S^k$ directly, but rather, the matrix $\frac{1}{\beta}S^k$, see \Cref{subsection_compactPRSM}.
When $X \in \mathcal{S}$ has eigenvalues $\lambda_i$, and corresponding eigenvectors $v_i$, it is well known 
that the projection onto the positive semidefinite cone is given by
\begin{align}
    \label{eqn_psdConeProjection}
    \mathcal{P}_{\mathcal{S}_+}(X) = \sum_{\{ i \, | \, \lambda_i > 0 \} } \lambda_i v_i v_i^\top = X - \sum_{\{ i \, | \, \lambda_i < 0 \} } \lambda_i v_i v_i^\top. 
\end{align}
Depending on the number of positive eigenvalues of $X$, one of the above expressions will be cheaper to compute.
The next lemma shows how to compute a projection onto $\mathcal{M}_\Logical$.
\begin{lemma}
\label{lemma_linearProjectionLemma}
Let matrices $M, \, \widehat{M} \in \mathcal{S}$, indexed by subsets of $[n]$, such that $\widehat{M} =  \mathcal{P}_{\mathcal{M}_\Logical}( M )$, where the projection operator $\mathcal{P}_{\mathcal{M}_\Logical}( \cdot )$ is given by \eqref{eqn_ProjectionOperatorDef}. Then $\text{diag}(\widehat{M}) = \text{diag}(M)$ and
\begin{align}
    \label{eqn_lemmaProjection1}
    \widehat{M}_{\delta,\mu} = M_{\delta,\mu} - \frac{1}{| \mathbf{x}^\gamma|} \bigg( \sum_{(\alpha, \beta) \in \mathbf{x}^\gamma} M_{\alpha, \beta} - p_\Logical^\gamma \bigg),
\end{align}
for $(\delta,\mu) \in \mathbf{x}^\gamma$, $\gamma \neq \emptyset$. In particular, when $|\mathbf{x}^\gamma| = 2$, \eqref{eqn_lemmaProjection1} reduces to
\begin{align}
    \label{eqn_lemmaSimplification}
    \widehat{M}_{\delta,\mu} = \widehat{M}_{\mu,\delta} = p^\gamma_\Logical / 2.
\end{align}
\end{lemma}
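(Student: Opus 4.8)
The plan is to recognize $\mathcal{M}_\Logical$ as an affine subspace of $\mathcal{S}^{|\mathbf{x}|}$ and to exploit the fact that its defining constraints are supported on mutually disjoint sets of matrix entries, so that the Frobenius projection decouples group by group. Concretely, for each nonempty $\gamma$ with $x^\gamma \in \mathbf{X}$, let $A_\gamma \in \mathcal{S}^{|\mathbf{x}|}$ be the $0$--$1$ matrix whose entries equal $1$ exactly on the positions in $\mathbf{x}^\gamma$. Since $\mathbf{x}^\gamma$ is symmetric (see \eqref{eqn_MgammaDef}) and $M$ is symmetric, the corresponding constraint reads $\langle A_\gamma, M\rangle = \Tr(A_\gamma M) = \sum_{(\alpha,\beta)\in\mathbf{x}^\gamma} M_{\alpha,\beta} = p^\gamma_\Logical$, so that $\mathcal{M}_\Logical = \bigcap_{\gamma \neq \emptyset} \{M \in \mathcal{S} : \langle A_\gamma, M\rangle = p^\gamma_\Logical\}$ is an intersection of affine hyperplanes.

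The crux is the following observation: every off-diagonal position $(\alpha,\beta)$ lies in exactly one set $\mathbf{x}^\gamma$, namely the one with $\gamma = \alpha \triangle \beta$, while the diagonal (corresponding to $\gamma = \emptyset$) appears in no constraint. Hence the matrices $\{A_\gamma\}_{\gamma \neq \emptyset}$ have pairwise disjoint supports, so they are mutually orthogonal in the trace inner product and none of them meets the diagonal. I would then invoke the standard formula for the orthogonal projection onto an intersection of hyperplanes with mutually orthogonal normals:
\begin{align}
    \widehat{M} = M - \sum_{\gamma \neq \emptyset} \frac{\langle A_\gamma, M\rangle - p^\gamma_\Logical}{\|A_\gamma\|_F^2}\, A_\gamma.
\end{align}
Using $\|A_\gamma\|_F^2 = |\mathbf{x}^\gamma|$ and $\langle A_\gamma, M\rangle = \sum_{(\alpha,\beta)\in\mathbf{x}^\gamma}M_{\alpha,\beta}$, and reading this off at a single position $(\delta,\mu)\in\mathbf{x}^\gamma$ (where only the term for that particular $\gamma$ contributes), yields \eqref{eqn_lemmaProjection1}; because no $A_\gamma$ touches the diagonal, $\text{diag}(\widehat{M}) = \text{diag}(M)$ is immediate. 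To confirm that this $\widehat{M}$ is the projection, I would verify that $\widehat{M} \in \mathcal{M}_\Logical$ and that $M - \widehat{M} \in \mathrm{span}\{A_\gamma\}$, the normal space of the affine subspace; these two facts characterize $\mathcal{P}_{\mathcal{M}_\Logical}(M)$.

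The only point demanding care is the interaction between the symmetric-matrix inner product and the double counting of off-diagonal entries, since each unordered pair $\{\alpha,\beta\}$ contributes two ordered positions to both $\|A_\gamma\|_F^2$ and the constraint sum. As an alternative (and as a sanity check) to the orthogonal-projection formula, I would solve the decoupled per-group least-squares problem directly: minimize $\sum_{(\alpha,\beta)\in\mathbf{x}^\gamma}(\widehat{M}_{\alpha,\beta}-M_{\alpha,\beta})^2$ subject to $\sum_{(\alpha,\beta)\in\mathbf{x}^\gamma}\widehat{M}_{\alpha,\beta} = p^\gamma_\Logical$; a single Lagrange multiplier forces a constant shift across the group, and the factors of two cancel to reproduce \eqref{eqn_lemmaProjection1}. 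Finally, the specialization \eqref{eqn_lemmaSimplification} follows by setting $|\mathbf{x}^\gamma| = 2$: with $\mathbf{x}^\gamma = \{(\delta,\mu),(\mu,\delta)\}$ and $M_{\delta,\mu} = M_{\mu,\delta}$, the bracketed sum equals $2M_{\delta,\mu}$ and \eqref{eqn_lemmaProjection1} collapses to $\widehat{M}_{\delta,\mu} = p^\gamma_\Logical/2$. I expect this bookkeeping with the symmetric inner product to be the main (though minor) obstacle; the structural decoupling itself is the real content and is essentially immediate once the disjoint-support observation is made.
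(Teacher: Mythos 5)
Your proposal is correct and follows essentially the same route as the paper's proof: both hinge on the observation that each off-diagonal entry belongs to exactly one constraint group $\mathbf{x}^\gamma$ while the diagonal is unconstrained, so the Frobenius projection decouples and reduces to a rank-one (constant-shift) correction per group, yielding \eqref{eqn_lemmaProjection1} and its specialization \eqref{eqn_lemmaSimplification}. The paper phrases this as a per-group least-squares problem over the upper-triangular entries with constraint $\mathbf{1}^\top \widehat{\mathbf{m}} = p^\gamma_\Logical/2$ solved analytically, which is exactly the Lagrange-multiplier sanity check you describe; your orthogonal-normals formula with the matrices $A_\gamma$ is just a global repackaging of the same computation.
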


\begin{proof}
Let $M \in \mathcal{S}$. To find $\widehat{M} =  \mathcal{P}_{\mathcal{M}_\Logical}( M )$, note that in $\mathcal{M}_\Logical$, each off-diagonal entry is restricted by exactly one constraint. This follows from \eqref{eqn_mathcalMset}. Since $\mathcal{M}_\Logical$ does not restrict the diagonal, it is easily seen that $\text{diag}(\widehat{M}) = \text{diag}(M)$. Now for the off-diagonal entries, we fix a nonempty $\gamma \subseteq [n]$ and define $\mathbf{m}$ as the vector that contains upper triangular entries of $M$, $M_{\alpha, \beta}$, such that $(\alpha, \beta) \in \mathbf{x}^\gamma$. Similarly, we define $\widehat{\mathbf{m}}$ as the vector containing the same entries of matrix $\widehat{M}$, rather than $M$. Note that $\mathbf{1}^\top \widehat{\mathbf{m}} = p^\gamma_\Logical / 2$. Minimizing the Frobenius norm of $\widehat{M} - M$ is now equivalent to minimizing the norm of $\widehat{\mathbf{m}} - \mathbf{m}$. Thus, we solve
\begin{align}
    \widehat{\mathbf{m}} = \argmin_{\mathbf{1}^\top v = p^\gamma_\Logical / 2} \| v - \mathbf{m} \|^2,
\end{align}
which can be done analytically and leads to \eqref{eqn_lemmaProjection1}. The simplification of \eqref{eqn_lemmaProjection1} to \eqref{eqn_lemmaSimplification} follows from the equality $\sum_{(\alpha, \beta) \in \mathbf{x}^\gamma} M_{\alpha, \beta} = 2 M_{\delta, \mu}$, whenever $|\mathbf{x}^\gamma| = 2$ and $(\delta, \mu) \in \mathbf{x}^\gamma$.
\end{proof}
Due to the presence of many unit constraints, see \eqref{eqn_unitCons}, these projections are computationally cheap to compute, and hence, the PRSM is well suited to exploit this.
Lastly, it is proven \cite{he2016convergence} that \eqref{eqn_PRSMiterates} converges for $(\gamma_1, \, \gamma_2) \in \mathcal{D}$, where
\begin{align}
    \label{eqn_mathcalD_definition}
    \mathcal{D} = \biggl\{  (\gamma_1, \gamma_2) \, \Big| \, \gamma_1 + \gamma_2 > 0, \, | \gamma_1 | < 1, \, 0 < \gamma_2 < \frac{1+\sqrt{5}}{2},  \, |\gamma_1| < 1 + \gamma_2 - \gamma_2^2 \biggr\}.
\end{align}
The values that we choose for $(\gamma_1, \gamma_2)$, and other parameters, are given in \Cref{section_NumericalResults}.

\subsection{Upper bounds and early stopping}
\label{subsection_SosSatUpperBounds}
After each PRSM iterate $k$ we obtain a triple $(Z^k,M^k,S^k)$ and the resulting $\langle I,M^k\rangle.$
Although this value converges to the optimal solution of the SDP, the convergence is (typically) not monotonic and therefore this value does not necessarily provide a valid upper bound for the problem.
In this section we describe how to obtain a valid upper bound from the output of the PRSM.

Observe that the feasible set of \ref{eqn_SumSquaresExplicit} depends on the chosen monomial $\mathbf{x}$ through $\mathcal{V}_{\mathbf{x}}$, see \eqref{eqn_VxSet}. Hence, by \eqref{eqn_PolySOS}, we have
\begin{align}
    p^\emptyset_\Logical - \min_{M \in \mathcal{M}_\Logical \cap \mathcal{S}_+} \langle I, M \rangle 
    =  \sup_{\lambda \in \mathbb{R}} \{ \lambda \, | \, F_\Logical - \lambda \in \mathcal{V}_{\mathbf{x}} \} \leq \min_{x \in \{ \pm 1 \}^n } F_\Logical,
\end{align}
for $p^\emptyset_\Logical$ as in \eqref{eqn_constantTermF}. From the above it follows that the maximum number of satisfiable clauses of $\Logical$ is bounded from above by
\begin{align}
    \label{eqn_upperBoundOptimal}
    m - p^\emptyset_\Logical + \min_{M \in \mathcal{M}_\Logical \cap \mathcal{S}_+} \langle I, M \rangle,
\end{align}
for $m$ equal to the number of clauses in $\Logical$. Since the number of satisfied clauses is an integer, the bound \eqref{eqn_upperBoundOptimal} can be improved by rounding down the result.

Ideally, the PRSM algorithm \eqref{eqn_PRSMiterates} computes the upper bound \eqref{eqn_upperBoundOptimal} by finding the optimal $M$ in the set $\mathcal{M}_\Logical \cap S_+$ (up to some given numerical precision). However, in practice one terminates 
the PRSM algorithm before this optimal $M$ has been found.
Let matrix $M^k$ then be defined as in \eqref{eqn_PRSMiterates} and let $\lambda_\text{min}(M^k)$ be its smallest eigenvalue. Note that 
\begin{align}
    \label{eqn_ADMMmatrixApprox}
    \widetilde{M}^k = M^k - \lambda_\text{min}(M^k) I \in \mathcal{M}_\Logical \cap \mathcal{S}_+,    
\end{align}
and so, $\widetilde{M}^k$ is feasible for \ref{eqn_SumSquaresExplicit}. Thus, a valid upper bound at iteration $k$ is obtained as follows:
\begin{align} 
\label{trueUB}
    \left \lfloor  m - p^\emptyset_\Logical + \langle I, \widetilde{M}^k \rangle \right \rfloor.    
\end{align}

\subsection{Lower bounds and rounding}
\label{subsection_lowerBoundsAndRounding}

In order to obtain a truth assignment of the variables
from the output of the PRSM one needs a rounding procedure.  We describe here the rounding procedure proposed by \citeauthor{van2008sums} \cite{van2008sums} and a modification of the procedure that is implemented in our solver.

Let matrix $M^*$ be the optimal solution to the SOS programme \ref{eqn_SumSquaresExplicit}, induced by a logical proposition $\Logical$ on $n$ variables. Let $\mathbf{x}$ be its monomial basis of size $s$, and $\lambda^*$ such that $F_\Logical(x) - \lambda^* = \mathbf{x}^\top M^* \mathbf{x}$. It is clear that, by optimality of $M^*$, $\lambda_{\text{min}}(M^*) = 0$. Let $N$ be the multiplicity of the zero eigenvalue, and $\mathbf{v}_i$, $i \in [N]$ the corresponding eigenvectors. If $y \in \{ \pm 1 \}^n$ is an optimal \mbox{MAX-SAT} truth assignment of $\Logical$, then $y$ minimizes $F_\Logical$. Let $y'$ be the monomial basis vector $\mathbf{x}$, evaluated with the entries of $y$. Then $y'^\top M^* y' = F_\Logical(y) - \lambda^*$. If the SOS relaxation \ref{eqn_SumSquaresExplicit} computes the optimal bound, we have $\lambda^* = F_\Logical(y)$, which implies that $y' M^* y' = 0$. As the eigenvectors $\mathbf{v}_i$ satisfy the same relation, they can be considered as approximations of maximally satisfying assignments.

Let $V \in \mathbb{R}^{s \times N}$ be the matrix having the vectors $\mathbf{v}_i$, $i \in [N]$ as columns. Each row of $V$ corresponds to a monomial of $\mathbf{x}$. For $p$ the number of monomials in $\mathbf{x}$ of degree two or more, matrix $B \in \mathbb{R}^{p\times N}$ is the submatrix of $V$ obtained by taking the rows of $V$ corresponding to these $p$ monomials. We define $U \in \mathbb{R}^{N \times N}$ as the matrix with columns the eigenvectors of $B^\top B$.

The rounding procedure proposed by \citeauthor{van2008sums} is to compute $x_\lambda \in \{ \pm 1\}^n$ as 
\begin{align}
    \label{eqn_simpleRounding}
    x_\lambda = \sign (P_1) \begin{bmatrix}
    \mathbf{0}_{n\times 1} & I_n & \mathbf{0}_{n \times (s-n-1)}
    \end{bmatrix} \sign (P ), \text{ for } P = V (U \tilde{\lambda}) \text{ and } \tilde{\lambda}_i = \xi_i \lambda_i \, \forall i \in [N].
\end{align}
Here, $\sign(\cdot)$ is the sign operator and $\lambda \in \mathbb{R}^N$ a vector generated uniformly at random on the unit sphere (which allows us to perform multiple roundings by generating multiple $\lambda$). Observe that $P_1$, the first entry of vector $P$, corresponds to the monomial $x^\emptyset$. The vector $\xi \in \mathbb{R}^N$ is a parameter to be chosen. We refer to \cite{van2008sums} for the details. 

The optimal matrix $M^*$ is a low  rank matrix, which ensures that $N$, the multiplicity of eigenvalue 0, satisfies $N > 1$. In practice however, we do not find $M^*$, but its approximation $\widetilde{M}^k$ at some iteration, see \eqref{eqn_ADMMmatrixApprox}. Due to early stopping, matrix $\widetilde{M}^k$ often has eigenvalue 0 with multiplicity 1. Then $\lambda$ is a scalar, which does not affect \eqref{eqn_simpleRounding}. Thus, when $N = 1$, we can only perform one rounding. To solve this issue, we propose constructing $V$ with the columns of $q$ eigenvectors corresponding to the $q$ smallest eigenvalues of $\widetilde{M}^k$ (only in case $N < q$). Thus, whenever $N < q$, we add $q- N$ eigenvectors corresponding to nonzero eigenvalues to the matrix $V$, in order to perform multiple roundings. In \cite{van2008sums}, it is observed that the rounding procedure works better when $N$ is small. We use this information by setting $q = 4$, so that we take at least four vectors for the rounding procedure.

\section{The weighted partial MAX-SAT}
\label{section_wpMS}
In this section, we extend the SOS approach from the MAX-SAT to  the weighted partial MAX-SAT. We also show that the dual formulation of the SOS programme for certain partial MAX-SATs, equals the relaxations by \citeauthor{anjos2004semidefinite}~\cite{anjos2005improved}.

In the \textit{weighted} \mbox{MAX-SAT}, each clause is given a weight, and the objective is to maximize the sum of the weights of the satisfied clauses. In  the \textit{partial} MAX-SAT, clauses are divided in soft and hard clauses. The aim is to maximize the number of satisfied soft clauses, while satisfying all the hard clauses. The combination of the weighted and partial MAX-SAT is clear, and referred to as the \textit{weighted partial} MAX-SAT \cite{li2021maxsat}.

Consider again a logical proposition $\Logical$. Let $w_j \in \mathbb{R}$ be the weight associated to clause $C_j$. The generalization of \eqref{eqn_FbFunctionDefinition} for the (unweighted) MAX-SAT, to  the weighted MAX-SAT follows  by setting
\begin{align}
    \label{eqn_Fb_weighted}
    F_{\phi}^\mathcal{W}(x) =  \sum_{j = 1}^m \frac{w_j}{2^{\ell_j}}\prod_{i \in C_j}(1-a_{j,i} x_i ),
\end{align}
and then minimizing $F_{\phi}^\mathcal{W}$ for $x \in \{ \pm 1 \}^n$. This minimization can be approximated by SOS optimization, using directly the semidefinite programme \ref{eqn_SumSquaresExplicit}.

For the weighted partial MAX-SAT, consider a logical proposition $\Logical$, on $n$ variables, $m$ soft clauses $C_j$ and $q$ hard clauses $C_p^\mathbf{H}$. To each hard clause $C^\mathbf{H}_p$, $p \in [q]$, we associate the polynomial \mbox{$f_p = \prod_{i \in [n]} ( 1- a_{p,i}x_i)$}, 
similar to \eqref{eqn_Fb_weighted}. Note that $f_p$ vanishes for all truth assignments that satisfy clause $C^\mathbf{H}_p$.
Similar to \eqref{eqn_Putinar} and \eqref{eqn_VxSet}, we define the sets
\begin{align}
    \label{eqn_mathcalHset}
     \mathcal{H}_{\mathbf{x}}:=\Big\{ \sum_{p \in [q]} c_p f_p \mod I, \, c_p \in \mathbb{R} \,\, \forall p \in [q] \Big\}
     \subseteq
     \mathcal{H} :=\Big\{   \sum_{p \in [q]} g_p f_p \mod I, \, g_p \in \mathbb{R}[x] \,\, \forall p \in [q] \Big\}. 
\end{align}
Let $\mathbf{SAT} \subseteq \{ \pm 1 \}^n$ be the set of all truth assignments satisfying the hard clauses (which we assume to be nonempty). From \cite{putinar1993positive} it follows that 
\begin{align}
    \label{eqn_wpMS_putinar}
    \min_{x \in \mathbf{SAT}} F^{\mathcal{W}}_\Logical(x) = \sup\{ \lambda \, | \, F^{\mathcal{W}}_\Logical - \lambda \in \mathcal{V} + \mathcal{H} \} \geq \sup\{ \lambda \, | \, F^{\mathcal{W}}_\Logical - \lambda \in \mathcal{V}_\mathbf{x} + \mathcal{H}_\mathbf{x} \},
\end{align}
where `+'\ denotes the Minkowski sum of sets. We proceed by writing the lower bound in \eqref{eqn_wpMS_putinar} as an explicit SDP, for which we introduce the following sets
\begin{align}
    \mathbf{H}^\gamma := \{ p \in [q] \, | \, \gamma \in C^{\mathbf{H}}_p \},
\end{align}
for $\gamma \subseteq [n]$. Set $\mathbf{H}^\gamma$ contains all $p$ for which $f_p$, when expanded, contains the term $\pm x^\gamma$. The sign here is determined by the parity of $| \gamma \cap I^+_p |$, see \eqref{eqn_IplusDef}. Additionally, we define as analogue to $\mathcal{M}_\Logical$, see \eqref{eqn_mathcalMset}, the set $\mathcal{M}^{\mathbf{H}}_\Logical$. This set contains all matrices $M$ and vectors $c$ such that $F^\mathcal{W}_\Logical - \lambda \equiv \mathbf{x}^\top M \mathbf{x} + \sum_{p \in [q]} c_p f_p \mod I$. It is therefore defined as
\begin{align}
    \label{eqn_mathcalMHlogical}
    \mathcal{M}^{\mathbf{H}}_\Logical := \Big\{ (M,c) \in \mathcal{S} \times \mathbb{R}^q \, | \, \sum_{(\alpha, \beta) \in \mathbf{x}^\gamma} M_{\alpha, \beta} + \sum_{p \in \mathbf{H}^\gamma} (-1)^{| \gamma \cap I^+_p |} c_p= p^\gamma_\Logical, \, \forall \gamma \neq \emptyset \text{ such that } x^\gamma \in \mathbf{X} \Big\}.
\end{align}
This allows us to adapt \ref{eqn_SumSquaresExplicit} to the weighted partial MAX-SAT as follows:
\begin{equation}
\label{eqn_wpMS_SDPformulation}
\begin{aligned}
\text{min} \quad & \langle I, M \rangle + \sum_{p \in [q]} c_p \\
\textrm{s.t.} \quad & (M,c) \in \mathcal{M}^\mathbf{H}_\Logical, \, M \in \mathcal{S}_+.
\end{aligned}
\end{equation}
We 
approximately solve \eqref{eqn_wpMS_SDPformulation} by the PRSM, see \Cref{section_pMaxSatSolutionMethod}.
 Let us elaborate on how to adapt the monomial bases to the (weighted) partial MAX-SAT. We make no distinction between soft and hard clauses for the $SOS_p$ basis, see \eqref{eqn_sosPbasis}. For basis $SOS_s^\theta$, see \eqref{eqn_monBasisInclusionChain}, we determine the variable weights as $w(i) := \sum_{ \{j \, | \, i \in C_j \} } w_j + \sum_{ \{ p  \, | \, i \in C^{\textbf{H}}_p \}} \overline{w}$, for $\overline{w}$ the mean of all soft clause weights $w_j$. For basis $SOS_p^{\mathbf{Q}}$, we add all $\binom{\mathbf{Q}}{2}$ quadratic terms of the $\mathbf{Q}$ variables that attain the highest value of $w(i)$. For unweighted partial MAX-SAT instances, we consider all $w_j$ to equal 1.

\subsection{The PRSM for SOS of the weighted partial MAX-SAT}
\label{section_pMaxSatSolutionMethod}
We show here how to solve \eqref{eqn_wpMS_SDPformulation} by the PRSM.  We first rewrite~\eqref{eqn_wpMS_SDPformulation}  by introducing the matrix variable $Z$, see also~\eqref{eqn_prsmDual1},
\begin{equation}
\label{eqn_wpMS_SDPformulationRew}
\begin{aligned}
\text{min} \quad & \langle I, M \rangle + \sum_{p \in [q]} c_p \\
\textrm{s.t.} \quad & M=Z, ~(M,c) \in \mathcal{M}^\mathbf{H}_\Logical, \, Z \in \mathcal{S}_+.
\end{aligned}
\end{equation}
Then,  the augmented Lagrangian function of~\eqref{eqn_wpMS_SDPformulationRew} w.r.t.~$Z=M$ and for a penalty parameter $\beta>0$ is:
\begin{align}
    L_\beta(Z,M,S,c) = \langle I, M \rangle + \langle S, M - Z \rangle + \frac{\beta}{2} \| M - Z \|^2_F + \mathbf{1}^\top c.
\end{align}
The PRSM is iteratively and separately optimizing over $(M,c) \in  \mathcal{M}^{\mathbf{H}}_\Logical$ and  $Z \in \mathcal{S}_+$, and updating $S$ twice per cycle, similarly to~\eqref{eqn_PRSMiterates}. However, in this case,   the $M$-subproblem from~\eqref{eqn_PRSMiterates} is replaced by the  $(M,c)$-subproblem.

We now show that minimization over $(M,c) \in  \mathcal{M}^{\mathbf{H}}_\Logical$ can be performed efficiently. By derivations similar to \eqref{eqn_argminAdaptedPRSM}, we have:
\begin{align}
    \label{eqn_argMinMC}
     \argmin_{(M,c) \in  \mathcal{M}^{\mathbf{H}}_\Logical} L_{\beta}(Z,M,S,c) &= \argmin_{(M,c) \in  \mathcal{M}^{\mathbf{H}}_\Logical} \| M - X \|^2_F + \frac{2}{\beta} \mathbf{1}^\top c,
\end{align}
where $X := Z - (S+I)/ \beta$. This is a convex quadratic programme  (QP) that we solve in two steps.
Firstly, consider the matrix-entries $M_{\alpha, \beta}$, with $(\alpha, \beta) \in \mathbf{x}^\gamma$, see \eqref{eqn_MgammaDef}, and $\mathbf{H}^\gamma = \emptyset$. Since $\mathbf{H}^\gamma = \emptyset$, these entries are unaffected by the $c_p$ variables. This implies that these $M_{\alpha, \beta}$ variables are not coupled with the other entries of $M$, and one can minimize separately over such $M_{\alpha, \beta}$. This separate minimization problem can be solved by applying \Cref{lemma_linearProjectionLemma}.

Secondly, the remaining QP
\begin{align}
    \label{eqn_extendedQP}
    \min_{} \sum_{ \{ \gamma \, | \, \mathbf{H}^\gamma \neq \emptyset \} } \sum_{(\alpha, \beta) \in \mathbf{x}^\gamma }(M_{\alpha, \beta} - X_{\alpha, \beta} )^2 + \frac{2}{\beta} \mathbf{1}^\top c,
\end{align}
can be simplified by the following observation. If $M^*$ is an optimal solution to \eqref{eqn_argMinMC}, then
\begin{align}
    (\alpha, \beta), (\alpha^\prime, \beta^\prime) \in \mathbf{x}^\gamma \Longrightarrow M^*_{\alpha, \beta} - X_{\alpha, \beta} = M^*_{\alpha^\prime, \beta^\prime} - X_{\alpha^\prime, \beta^\prime}.
\end{align}
Hence, \eqref{eqn_extendedQP} can be simplified by substituting each term $\sum_{(\alpha, \beta) \in \mathbf{x}^\gamma }(M_{\alpha, \beta} - X_{\alpha, \beta} )^2$ with a single squared variable. We solve the resulting QP either  by solving the KKT conditions using the LU decomposition,  or via MOSEK \cite{mosek}. The solving method depends on the underlying QP.

\subsection{Strengthening the bounds}
\label{subsection_SATresolutionRule}
We demonstrate a simple technique for improving the upper bounds given by programme \eqref{eqn_wpMS_SDPformulation}. This technique is based on the SAT resolution rule, which is given as follows. For two hard clauses of some proposition $\Logical$, on literals $x$, $z_i$, $i \in [s]$ and $y_i$, $i \in [t]$, construct the  clause below the horizontal line:
\begin{equation}
    \label{eqn_SATresolution}
  \begin{gathered}
    [x \vee z_1 \vee \ldots \vee z_s], \, \, [\neg x \vee y_1 \vee \ldots \vee y_t]  \\
    \hline
    z_1 \vee \ldots \vee z_s \vee y_1 \vee \ldots \vee y_t.
  \end{gathered}
\end{equation}
In contrast to the MAX-SAT resolution rule \eqref{eqn_maxSATresolution}, the SAT resolution rule states that one may add the clause below the horizontal line to $\Logical$, without changing its (un)satisfiability (we say that the new clause is \textit{implied} by the original two clauses). We may apply this SAT resolution rule to the hard clauses of a partial MAX-SAT instance to generate more hard clauses. As each new clause induces a new variable $c_p$, the bound of programme \eqref{eqn_wpMS_SDPformulation} can only improve. One may also regard SAT resolution as extending the set $\mathcal{H}_\mathbf{x}$, see \eqref{eqn_mathcalHset}, by including terms of the form {$c_px^\alpha f_p$, for some $\alpha \subseteq [n]$}  where $c_p\in \mathbb{R}$.

Additionally, SAT resolution can generate hard unit clauses. This is advantageous, since hard unit clauses reduce the number of variables in the MAX-SAT, see \Cref{subsection_Assumptions}.

\subsection{Duality in the partial MAX-SAT}
\label{section_partialMSduality}
Now we consider a partial MAX-SAT with only hard clauses. Solving such instances is thus equivalent to determining the satisfiability of the given hard clauses. We show that by taking the dual of the resulting SOS programme, one obtains (a stronger version of) the relaxations of \citeauthor{anjos2004semidefinite} \cite{anjos2004semidefinite}, given by \eqref{eqn_AnjosComparison}.

We define, for $A \in \mathcal{S}^n$, $\text{vec}(A) \in \mathbb{R}^{n^2}$ the vector whose entries are the columns of $A$ stacked together. We start from programme \eqref{eqn_wpMS_SDPformulation} and perform variable splitting on $M$, similar to \eqref{eqn_prsmDual1}. We take the dual $g(S)$ of this formulation, similar to \eqref{eqn_dualProof1}, and consider the problem
\begin{align}
    \label{eqn_wpMSdualMax}
    \max_{S} g(S) &= \max_{S \in \mathcal{X}_\Logical \cap \mathcal{S}_+} \phantom{\Big[} \min_{(M,c) \in \mathcal{M}^\mathbf{H}_\Logical}  \langle S, -M \rangle + \mathbf{1}^\top c,
\end{align}
for $\mathcal{X}_\Logical$ as in \eqref{eqn_mathcalXset}. The steps that show that $S \in \mathcal{X}_\Logical \cap \mathcal{S}_+$ is necessary for the above expression to be finite are provided in the proof of \Cref{thm_dualEquivalence}. We rewrite the inner minimization problem in \eqref{eqn_wpMSdualMax} as
\begin{align}
    \label{eqn_dualVecRewritten}
    \min_{(M,c) \in \mathcal{M}^\mathbf{H}_\Logical}  \begin{bmatrix}
        \text{vec}(S) \\ \mathbf{1}
    \end{bmatrix}^\top
    \begin{bmatrix}
        \text{vec}(-M) \\ c
    \end{bmatrix},
\end{align}
and proceed to show under which conditions this value is bounded. Observe that the coefficients $p^\gamma_\Logical = 0$, see \eqref{eqn_mathcalMHlogical}, since there are no soft clauses. Moreover, the set $\mathcal{M}^\mathbf{H}_\Logical$ places only linear constraints on the entries of $M$ and $c$. Therefore, there exists a matrix $D$ that satisfies
\begin{align}
    (M,c) \in \mathcal{M}^\mathbf{H}_\Logical \iff D \begin{bmatrix}
        \text{vec}(-M) \\ c
    \end{bmatrix} = \mathbf{0}.
\end{align}
Hence, \eqref{eqn_dualVecRewritten} is bounded if and only if $\begin{bmatrix}
\text{vec}(S)^\top & \mathbf{1}^\top
\end{bmatrix}$ is contained in the row space of $D$. This is precisely the requirement that $v^\text{SDP}(S, C^{\mathbf{H}}_p) = 1, \, \forall p \in [q]$, as in \eqref{eqn_AnjosComparison}. We provide one example of this claim. 

\begin{example}
Consider the monomial basis $\mathbf{x} = (x^\emptyset,x_1,x_2)$. Let $C^{\mathbf{H}}_1 = x_1 \vee x_2$, so that $f_1 = 1-x_1-x_2+x_1 x_2$. Now 
\begin{align}
    &(M,c) \in \mathcal{M}^\mathbf{H}_\Logical \Longrightarrow 
    D u
      = \mathbf{0}, \text{ for }
      D = \begin{bmatrix}
        -1 & -1&0 &0& 0 &0& -1 \\
        0&0 & -1 &-1& 0 &0& -1 \\
        0&0&0&0&-1&-1&1
    \end{bmatrix}, \\
    &\text{ and } u = \begin{bmatrix}
        -M_{1,\emptyset} & -M_{\emptyset,1} &
        -M_{2,\emptyset} & -M_{\emptyset,2} &
        -M_{1,2} & -M_{2,1} & c_1
    \end{bmatrix}^\top.
\end{align}
For $S \in \mathcal{X}_\Logical$, and by definition of $\mathcal{X}_\Logical$ \eqref{eqn_mathcalXset}, we have $S_{1,\emptyset} = S_{\emptyset,1}$, and similar equalities hold for all other related entries of matrix $S$. Thus, we may remove duplicate columns in $D$. We have
\begin{align}
    \begin{bmatrix}
        S_{1,\emptyset} & S_{2,\emptyset} & S_{1,2} & 1
    \end{bmatrix} \in \mathrm{row} \begin{bmatrix}
-1 & 0&0  & -1 \\
 0& -1 & 0 & -1 \\
 0&0  & -1 & 1 
\end{bmatrix} \Longrightarrow S_{1,\emptyset} + S_{2,\emptyset} - S_{1,2} = 1 \Longrightarrow v^\mathrm{SDP}(S, C^{\mathbf{H}}_1) = 1.
\end{align}
\end{example}
Thus, \eqref{eqn_dualVecRewritten} is bounded if $\begin{bmatrix}
\text{vec}(S)^\top & \mathbf{1}^\top
\end{bmatrix} \in \text{row}(D)$, in which case, the value equals zero. Hence, programme \eqref{eqn_wpMSdualMax} is equivalent to \eqref{eqn_AnjosComparison}.

\section{SOS-MS: Algorithm description}
\label{section_algorithmDescr}

In this section, we elaborate on the algorithm behind our complete SOS-SDP based  MAX-SAT solver, named \SOSms{}. In particular, we outline the main parts of \SOSms{} and provide a pseudocode, see~\Cref{alg_SosSdpMaxSat}.

Consider a given MAX-$k$-SAT instance, $k \leq 3$, and corresponding logical proposition $\Logical$. \SOSms{} uses the PRSM, see \eqref{eqn_PRSMiterates}, to obtain an approximate solution $\widetilde{M}$, see \eqref{eqn_ADMMmatrixApprox}, to \ref{eqn_SumSquaresExplicit}. Then, using \eqref{trueUB}, \SOSms{} determines an upper bound \texttt{UB}  and a lower bound  on the optimal value, by applying the rounding procedure from \Cref{subsection_lowerBoundsAndRounding}.
The solver calls once, at the beginning,  the \heurMaxSolve{}\footnote{The \heurMaxSolve{} algorithm is publicly available at \url{http://lcs.ios.ac.cn/~caisw/MaxSAT.html}.} algorithm \cite{CCLS} for the MAX-SAT, to compute a lower bound. \heurMaxSolve{} is a local search algorithm whose performance was one of the best among tested heuristic algorithms in the MSE-2016.
We set \texttt{LB} as the maximum value attained by these two methods.

In case $\texttt{LB} = \texttt{UB}$, we have proven optimality and the algorithm terminates. In case $\texttt{LB} < \texttt{UB}$, we branch on some variable $x_i$, $i \in [n]$, by assigning it either \true{} or \false{}. This resembles  to performing unit resolution, see ~\Cref{subsection_Assumptions}. We write $\Logical^\prime = \texttt{unitRes}(\Logical, i )$ to indicate that $\Logical^\prime$ is the logical proposition obtained from $\Logical$ by setting $x_i = 1$ (equivalently, $x_i = \true{}$). We use the same notation to indicate
the logical proposition obtained from $\Logical$  by 
setting $x_i = -1$ (equivalently, $x_i = \texttt{false}$), i.e., ${\Logical^\prime =\texttt{unitRes}(\Logical, -i )}$. To emphasize the difference between $\Logical$ and $\Logical^\prime$, in this section, we write $n_\Logical$ and $m_\Logical$ for the number of variables and clauses of $\Logical$.

If we branch on $x_i$, we remove from the monomial basis $\mathbf{x}$ all monomials $x^\alpha$ that satisfy $i \in \alpha$. We remove from matrices $Z^k, \, M^k$ and $S^k$,  that were obtained in the last relevant call of the PRSM,
all rows and columns corresponding to such subsets $\alpha$.
The resulting matrices are then used as the new $Z^0, \, M^0$ and $S^0$ in the next PRSM call, i.e., those are used as a warm start.

We determine the order of variables for branching as follows. First, we consider for $b \in \{ -n, \ldots,n\} \setminus \{ 0 \}$, the values
\begin{align}
    \label{eqn_orderValues}
     u_b = m_{\Logical^\prime} - p^\emptyset_{\Logical^\prime} + \langle I, \widetilde{M} \rangle - \sum_{   |b| \in \alpha  }\widetilde{M}_{\alpha, \alpha}, \text{ for } \Logical^\prime = \texttt{unitRes}(\Logical, b ),
\end{align}
similar to \eqref{eqn_upperBoundOptimal}. Here, $\widetilde{M}$ is the approximate solution to \ref{eqn_SumSquaresExplicit}. We remark that \eqref{eqn_orderValues} can be quickly computed without explicitly performing the unit resolution. Observe that $\langle I, \widetilde{M} \rangle - \sum_{   |b| \in \alpha  } \widetilde{M}_{\alpha, \alpha}$ equals the trace of the new matrix $M^0$, which is used as warm start for $\texttt{P}_{\Logical^\prime}$.

Second, we perform the rounding procedure to $\widetilde{M}$ as described in \Cref{subsection_lowerBoundsAndRounding}. That is, we randomly generate a set $\Lambda$ of  vectors drawn uniformly at random on the unit sphere, and compute the corresponding rounded truth assignments $x_{\lambda} \in \{ \pm 1 \}^{n_\Logical}$, $\lambda \in \Lambda$, by \eqref{eqn_simpleRounding}. 
We update \texttt{LB} if a better truth assignment is found.
Let $\Lambda^* \subseteq \Lambda$ contain all the vectors $\lambda$ that satisfy $F_\Logical(x_\lambda) = m_\Logical - \texttt{LB}$, and, assuming $\Lambda^* \neq \emptyset$, set $\mathbf{v} := \frac{1}{|\Lambda^*|}\sum_{\lambda \in \Lambda^*} x_\lambda$. Note that $-1 \leq \mathbf{v}_i \leq 1$ $\forall i \in [n]$  with equality if and only if $x_i$ is assigned the same truth value for all $x_\lambda$, $\lambda \in \Lambda^*$. We define
\begin{align}
    B := \left \{ b \, \, | \,\, 0 < |b| \leq n_\Logical, \,\, b \in \mathbb{Z}, \,\, \mathbf{v}_{|b|} = - \text{sgn}(b) \right \},
\end{align}
and explain its purpose by an example. If $-3 \in B$, then $x_3$ is assigned \true{} by all $x_\lambda$, $\lambda \in \Lambda^*$. Heuristically, branching by setting $x_3 = -1$ would then hopefully lead to low  upper bounds in the resulting search tree. This is advantageous because low upper bounds lead to faster pruning. In case $\Lambda^* = \emptyset$, we set $B = \{ -n, \ldots,n\} \setminus \{ 0 \}$.

Lastly, for all $b \in B$, we sort them in increasing order of $u_b$, see \eqref{eqn_orderValues}, and store this order in vector $\sigma$. Thus, the entries of $\sigma$ satisfy
\begin{align}
    \label{eqn_sigmaVec}
    u_{\sigma_j} \leq u_{\sigma_{j+1}} \text{ and } \sigma_j \in B.
\end{align}
Vector $\sigma$ determines the variable selection in the branching process of \SOSms{}, which we describe in more detail in the sequel. 

Consider a node in the \SOSms{}  search tree, in which we consider the proposition $\Logical$. We initialize $b_* := 0$. For increasing values of $j \in [b_{\text{max}}]$, where $b_{\text{max}} > 1$ is some integer (see \eqref{eqn_bMax}), we compute the SDP upper bound corresponding to $\texttt{unitRes}(\Logical, \sigma_j)$, denoted \texttt{UB}.
In case $\lfloor \texttt{UB} \rfloor \leq \texttt{LB}$, we repeat this process with the next value of $\sigma_j$, and update $b_* := b_*+1$. In case $\lfloor \texttt{UB} \rfloor > \texttt{LB}$, we terminate the process. 

In case $b_* > 0$, we find that for all $j \leq b_*$, the propositions $\texttt{unitRes}(\Logical, \sigma_j)$ cannot improve on \texttt{LB}.
Thus, we may limit the search for better truth assignments to $\texttt{unitRes}(\Logical, -\sigma_1, \ldots, - \sigma_{b_*} )$. In case $b_* = 0$, we add both $\texttt{unitRes}(\Logical, \sigma_1)$ and $\texttt{unitRes}(\Logical, -\sigma_1)$ to the search tree, as we cannot exclude either one from attaining a value strictly greater than $\texttt{LB}$.

We take the previously mentioned $b_\text{max}$ as
\begin{equation}
    \label{eqn_bMax}
    \begin{aligned}
    b_\text{max} = \min \left \{ \max \left\{ 3; \lfloor 6 \, \texttt{GAP} + 1/2 \rfloor \right \}; 6 \right \}, \text{ for } \texttt{GAP} = m_{\Logical^\prime} - p^\emptyset_{\Logical^\prime} + \langle I, \widetilde{M} \rangle - \texttt{LB}-1, \\[-20pt]
    \,
    \end{aligned}
\end{equation}
where $\widetilde{M}$ is an approximate solution to \ref{eqn_SumSquaresExplicit}.

A  pseudocode of \SOSms{} is given by \Cref{alg_SosSdpMaxSat}. In particular, the branching process is described in \Cref{line_loop1,line_PrsmCall2,line_pruneCondition,line_loop2,line_loop3,line_loop4}. Note the two PRSM calls in \Cref{line_PrsmCall1,line_PrsmCall2}. In \Cref{line_PrsmCall1}, the main purpose of the PRSM is to find  an upper bound which equals the best known lower bound. When this does not occur, we use the approximate solutions as warm start for the PRSM call in \Cref{line_PrsmCall2}. In \Cref{line_PrsmCall2}, the purpose of the PRSM is to prune the node corresponding to $\Logical^\prime$. We use the LOBPCG algorithm \cite{knyazev2001toward} to efficiently approximate $\lambda_\text{min}(M^k)$, which allows us to compute approximate upper bounds during the PRSM iterates, see \eqref{eqn_upperBoundOptimal} and \eqref{eqn_ADMMmatrixApprox}. In case the approximate upper bound indicates that the node can be pruned, we recompute $\lambda_\text{min}(M^k)$ with the more accurate MATLAB \texttt{eig} function.

The algorithm can then stop iterating as soon as the condition in \Cref{line_pruneCondition} is satisfied, or when it is clear that this condition cannot be satisfied in reasonable time.

\begin{remark}
\label{remark_evDecomp}
Most of the running time of \SOSms{} is spent on computing projections of matrices onto $\mathcal{S}_+$, see \eqref{eqn_psdConeProjection}. We found that computing the full spectra of the matrices to be projected (in single-precision, rather than standard double-precision) trough MATLAB's \texttt{eig} command was the fastest method of computing $\mathcal{P}_{\mathcal{S}_+}(\cdot)$, even though only the positive eigenpairs, or negative eigenpairs are required. For a variant of the PRSM, the authors of \cite{rontsis2022efficient} propose using the LOBPCG algorithm~\cite{knyazev2001toward} to compute only positive/negative eigenpairs (when this number is deemed small enough) of matrices to be projected. We were unable to obtain a speedup over \texttt{eig} through this method in the PRSM framework.
\end{remark}

\begin{algorithm}[ht!]
\caption{SOS-SDP MAX-SAT Solver}\label{alg_SosSdpMaxSat}
\textbf{Input:} A MAX-$k$-SAT instance $\Logical$ ($k 
\leq 3$), on $n_\Logical$ variables and $m_\Logical$ clauses.
\label{alg_lineTest} \\
\textbf{Output:} Optimal truth assignment $x \in \{ \pm 1 \}^n$. \\
Set $\texttt{UB} :=m_\Logical$. \\
Use the \heurMaxSolve{} algorithm on $\Logical$ to obtain a value for current best lower bound \texttt{LB} and corresponding truth assignment $x \in \{ \pm 1 \}^n$. \\
Initialize the stack $Q := ( \Logical, \texttt{UB} )$. \\
\While{$Q \neq \emptyset$ \label{line_branchCondition}}{
Take $(\Logical, \texttt{UB} )$ as the first element of $Q$. \label{line_initialBranch} \\
Use the PRSM, see \eqref{eqn_PRSMiterates}, to obtain an approximate solution $\widetilde{M}$, see \eqref{eqn_ADMMmatrixApprox}, to \ref{eqn_SumSquaresExplicit}. \label{line_PrsmCall1} \\
 Apply the rounding procedure from \Cref{subsection_lowerBoundsAndRounding} to $\widetilde{M}$ and obtain a lower bound $\texttt{LB}$. Update $\texttt{LB}$ and $x$ if a better truth assignment has been found. \\
 Set $\texttt{UB} := \min\{ \texttt{UB}, \lfloor m_\Logical - p^\emptyset_\Logical + \langle I, \widetilde{M} \rangle \rfloor \}$, see \eqref{eqn_upperBoundOptimal}. \\
 Remove $(\phi, \texttt{UB})$ from the stack $Q$. \\
\If{$\texttt{LB} \geq \texttt{UB}$}{
    {\color{blue} \tcc{The current node $(\Logical, \texttt{UB})$ can be pruned, so return to \Cref{line_branchCondition}, and check if Q is empty.} }
    \textbf{continue}
    }

Determine $\sigma$, see \eqref{eqn_sigmaVec}, and $b_{\text{max}}$, see \eqref{eqn_bMax}. Set $b_* := 0$.\\
\For{$j = 1 \texttt{ to } b_{\text{max}}$ \label{line_loop1}}{
        Set $\Logical^\prime := \texttt{unitRes}(\Logical, \sigma_j )$, use the PRSM to obtain an approximate solution  $\widetilde{M}$ to $\texttt{P}_{\Logical^\prime}$. \label{line_PrsmCall2} \\
        {\color{blue} \tcc{Use as warm start the matrices obtained from the programme \ref{eqn_SumSquaresExplicit} in \Cref{line_PrsmCall1}.} }
    \uIf{$\texttt{LB} \geq \lfloor m_{\Logical^\prime} - p^\emptyset_{\Logical^\prime} + \langle I, \widetilde{M} \rangle \rfloor$ \label{line_pruneCondition}}
        {
            $b_* := b_*+1$. \label{line_loop2}
        }
    \Else{ \label{line_loop3}
        \textbf{break} \label{line_loop4}
    }}
    \uIf{$b_* > 0$}{
    Set $\Logical := \texttt{unitRes}(\Logical, - \sigma_1, - \sigma_2, \ldots, -\sigma_{b_*})$, and $Q := Q \cup (\Logical, \texttt{UB} )$. \label{line_splitProp1} \\
    {\color{blue} \tcc{Q is nonempty, so return to \Cref{line_branchCondition}.} }
    }
    \Else{
    Set $\Logical_1 := \texttt{unitRes}(\Logical, \sigma_1)$, $\Logical_2 := \texttt{unitRes}(\Logical, -\sigma_1)$ and $Q := Q \cup \{ (\Logical_1, \texttt{UB} ), (\Logical_2, \texttt{UB} ) \}$. \label{line_splitProp2} \\
    {\color{blue} \tcc{Q is nonempty, so return to \Cref{line_branchCondition}.} }
    }
    }
    \Return Optimal truth assignment $x$.
\end{algorithm}

\subsection{Parsing sum of squares programmes} 
\label{section_Parsing}
 We cover here the problem of initializing an SOS semidefinite programme. Methods for achieving this are built in most SOS packages, such as SOSTOOLS \cite{sostools} and GlobtiPoly \cite{henrion2009gloptipoly}. In our application, we are interested in SOS modulo a vanishing ideal, which is not natively implemented in most SOS software, but rather, by restriction of the support set of the variables to a semialgebraic set (such as $x^2 = 1$), which incurs additional variables in the semidefinite programme.

For most SDP applications, it is implicitly assumed that the programme parameters are either already given, or require a negligible time to compute, in comparison to the time required for solving the resulting semidefinite programme.
For most SOS programmes however, this is decidedly not the case. The authors of SOSTOOLS \cite{sostools}, a third-party MATLAB package for formulating and solving SOS programmes, confirm this observation. In chapter one of the user's manual to SOSTOOLS \cite{sostools}, it is stated that defining the semidefinite programme, rather than solving it, is often the limiting factor for tractable problem size. In accordance with \cite{sostools}, we thus consider the problem of \textit{parsing} an SOS programme as defining it by its programme parameters. This definition of parsing includes the problem of choosing the monomial basis $\mathbf{x}$ such that the given polynomial can be accurately described. Many theoretical results can guide the choice of $\mathbf{x}$, such as the Newton polytope \cite{sturmfels1998polynomial}, see also \cite{reznick1978extremal}, or facial reduction \cite{permenter2014basis}. For our purposes however, choosing $\mathbf{x}$ can be done with a simple fixed procedure, see e.g., \eqref{eqn_sosPbasis}. We therefore consider parsing as only the purely numerical problem of finding, rather than choosing, $\mathbf{x}$.

In this section, we provide a short overview of our parsing method. We exploit the fact that our variables are  $\{ \pm 1 \}^n$, which allows us to achieve fast parsing times, compared to general purpose SOS software. For example, due to the properties of computation modulo $I$, see \eqref{eqn_modIdef}, monomials can be stored in two ways: either we store some $\alpha \subseteq [n]$, corresponding to $x^\alpha$ as in \eqref{eqn_xAlphaDef}, (\textit{subset format}) or we store monomials as a (possibly sparse) vector $\mathbf{v} \in \{0, 1 \}^n$, corresponding to $x^\mathbf{v} = x_1^{v_1} \ldots x_n^{v_n}$ (\textit{vector format}). Such vectors can be saved as sparse boolean vectors. It is trivial to switch between these two formats, which is what we use in our parsing algorithm: we implement each step using the best suited format. Also note that for monomial basis $\mathbf{x}$ given by \eqref{eqn_sosPbasis}, monomials in $\mathbf{X} \equiv \mathbf{x} \mathbf{x}^\top $, see \eqref{eqn_xxTmod}, will have degree at most 4, which ensures that both formats require little storage.

Let $\Logical$ be the considered proposition, on $n$ variables and $m$ clauses. Initially, to compute $\mathbf{x}$ according to \eqref{eqn_sosPbasis}, we consider the unique clauses $C_j$ of $\Logical$, $j \in [m]$. Recall that we define a clause $C_j$ as a subset of $[n]$, see \Cref{section_Notation}.
To compute the monomials in $\mathbf{X}$, we need to compute the cross products of all monomials in $\mathbf{x}$. This is best done in vector format, by using the entrywise \textit{exclusive or} operation on boolean vectors, denoted $\oplus$. That is, $x^\mathbf{v}x^\mathbf{u} \equiv x^\mathbf{v \oplus u } \mod I$, which is computationally cheaper than the symmetric difference operator as in \eqref{eqn_xxTmod}.

Next, to determine the sets $\mathbf{x}^\gamma$, as in \eqref{eqn_MgammaDef}, we need to find the sets of equal monomials in $\mathbf{X}$. We first split up the monomials in groups based on their degree, which is trivially computed in vector format as $\mathbf{v}^\top \mathbf{1}$. Then for the monomials of degree one, we switch to subset format and find the groups of equal monomials based on these one element subsets. For degrees two and three, this procedure is similar, except we further divide these groups in $n$ smaller subgroups, based on what the first nonzero entry in their vector $\mathbf{v}$ is. If we consider the $SOS_p$ basis, see \eqref{eqn_sosPbasis}, then for non-trivially sized instances, most of the monomials in $\mathbf{x} \mathbf{x}^\top$ will be of degree 4. Therefore, we divide the monomials of degree 4 in $n^2$ subgroups, based on the first and second nonzero entry in their vector $\mathbf{v}$. Then, in each of these subgroups, we switch to subset format, which yields a matrix of four columns, and each row associated to a single monomial. The first two columns of this matrix are fixed by which subgroup we consider; hence we only evaluate the third and fourth columns of this matrix. In these columns, we then search for unique rows, which represent unique monomials.

We compute the coefficients $p^\gamma_\Logical$, see \eqref{eqn_polynomialDef}, iteratively per clause. If $k$ is the size of the largest clause in $\Logical$, we create $k+1$  matrices $A_s$, for $0 \leq s \leq k$, where matrix $A_s$ is an $s$-dimensional matrix. Then, for $\gamma = \{ \gamma_1, \ldots, \gamma_s \}$, we store $p^\gamma_\Logical$ at position $(A_s)_{\gamma_1, \ldots, \gamma_s}$. Note that matrix $A_0$ is a number storing $p^\emptyset_\Logical$, see \eqref{eqn_constantTermF}. For \mbox{(MAX-)3-SAT} instances, matrices $A_1$ and $A_2$ will be full, matrix $A_3$ will (generally) be sparse, and matrix $A_4$, although we do not create it,  would be empty. We have also tested the recently developed \texttt{dpvar} structure of SOSTOOLS \cite{jagt2022efficient} to compute the $p^\gamma_\Logical$ symbolically, but found that it compared unfavourably to our procedure, in terms of computation time.

The last step is to match the coefficients $p^\gamma_\Logical$ to the monomial $x^\gamma$ of $\mathbf{X}$. Clearly, we need only to  consider those coefficients $p^\gamma_\Logical$ for which $p^\gamma_\Logical \neq 0$. Thus, for some nonzero $p^\gamma_\Logical$ stored in $A_s$, we know that $| \gamma | = s$. As the monomials in $\mathbf{X}$ have already been divided into groups based on their degree, we search only the monomials of degree $s$, to find $x^\gamma$. In case we use the $SOS_p$ basis, then by \Cref{lemma_unitConstraintsCoeff}, we need only check those monomials $x^\alpha$ for which $| \mathbf{x}^\alpha | > 2$, see \eqref{eqn_MgammaDef}. Now to find $x^\gamma$ in this subgroup of monomials (of which one equals $x^\gamma$), we use the vector format. By exploiting properties of the ideal $I$, see \eqref{eqn_modIdef}, and $SOS_p$, we are able to obtain high parsing speeds. For example, instance \texttt{s3v70c1500-1.cnf} (70 variables and 1500 clauses of length 3) from the MSE-2016\footnote{All instances are available at \url{http://www.maxsat.udl.cat/16/benchmarks/index.html}.},  induces an $SOS_p$ basis of size 2107. Matrix $\mathbf{X}$, see \eqref{eqn_xxTmod}, then contains 4,439,449 monomials. Our algorithm parses this basis in (approximately) 1 second.

This completes the summary of our parsing algorithm. For more details, interested readers are referred to the code available on \texttt{GitHub}\footnote{Code available at \gitHubLink{}.}.

\section{Numerical results}
\label{section_NumericalResults}

In this section, we test \SOSms{}, described in \Cref{section_algorithmDescr},  on  MAX-3-SAT,  weighted partial MAX-2-SAT, and weighted MAX-3-SAT instances from the MSE-2016. We use instances from the same source to compute bounds for the partial MAX-3-SAT.
We choose the year 2016, because later years of the MSE offer no MAX-2-SAT or MAX-3-SAT instances. The instances in this section are taken from the MSE-2016\footnote{For more information on the MSE-2016, see \url{http://maxsat.ia.udl.cat/introduction/}.}random track.

Experiments are carried out on a 16 GB RAM laptop, with an Intel Core i7-1165G7 (2.8 GHz) and four cores, running Windows 10 Enterprise. We set the PRSM parameters, see \eqref{eqn_PRSMiterates}, as
\begin{align}
    \label{eqn_gammaParameter}
    (\gamma_1, \, \gamma_2, \, \beta) = \Big(\frac{1}{2}, \, \frac{9}{10} \frac{1+\sqrt{5}}{2}, \,  
    \frac{2s}{5} \Big),
\end{align}
where $s$ is the order of the matrix variables. Our MATLAB implementation of the PRSM is available at \gitHubLink{}.

\subsection{MAX-3-SAT instances}
\label{subsect_numericsMax3SAT}

In this section we first show a relation between upper bounds obtained by solving ~\ref{eqn_SumSquaresExplicit} and $SOS_p^{\mathbf{Q}}$ bases~\eqref{eqn_sosPQBasis}. Then, we demonstrate the performance of \SOSms{}.

\Cref{table_basisBoundsComparison} presents a comparison of the  bounds  obtained   by solving the SOS programme \ref{eqn_SumSquaresExplicit}  using the $SOS_p$ basis~\eqref{eqn_sosPbasis} and the $SOS_p^{\mathbf{Q}}$ bases~\eqref{eqn_sosPQBasis} where $\mathbf{Q} \in \{40,50,60,70, 110\}$, for several instances in category MAX-3-SAT on 70, 90 and 110 variables. The first column reports the name of the corresponding instance, on $\texttt{v}$ variables and \texttt{c} clauses.  Column \texttt{LB} provides the best found lower bound, obtained by running the CCLS algorithm~\cite{CCLS} for five seconds. Column \texttt{UB} reports the computed upper bounds. Column \textbf{Iter.}~gives the number of PRSM iterations, divided by $10^2$. For each instance and monomial basis, we run 200 iterations. Then, if the observed value of \texttt{UB} satisfies $\texttt{UB} \leq \texttt{LB}+1.5$, we perform 200 additional iterations. We stop early if $\lfloor \texttt{UB} \rfloor = \texttt{LB}$. The results show the strength of the $SOS_p$ basis for instances with with 70 variables. In particular, that basis is sufficiently large for closing gaps of several instances. The results also show that the $SOS_p^{\mathbf{Q}}$ basis can be used to further improve bounds for instances with 70, 90 and 110 variables.

 In \Cref{table_bestUBperInstance}, we present more details on the best upper bounds attained on the same instances as in  \Cref{table_basisBoundsComparison}. The columns of \Cref{table_bestUBperInstance} follow the same definitions as the previous table. Additionally, column $\mathbf{Q}$ relates to the $SOS_p^\mathbf{Q}$ basis used
  and column $| \mathbf{x} |$ reports the number of monomials in that basis (equivalently, the order of the matrix variable of programme \ref{eqn_SumSquaresExplicit}). Here, we refer to $SOS_p$ as $SOS_p^0$. Column $\textbf{T.~(s)}$ reports the computation time in seconds. The results show that we closed the optimality gap for all instances with 70 and 90 variables in less than 9 minutes. \Cref{table_bestUBperInstance} also shows that the computational time increases w.r.t.~the size of the basis. Furthermore, \Cref{table_bestUBperInstance} shows that uniform random MAX-SAT instances on the same number of variables and clauses can differ in difficulty to solve. For example, instances \texttt{s3v70c800-1.cnf} and \texttt{s3v70c800-3.cnf} have the same number of variables and clauses. However, proving optimality of the lower bound of the former requires almost three times the computation time as for the latter.

 In \Cref{picture_cactusPlotS3V70}, we show the performance of \SOSms{}, using the $SOS_p^{50}$ basis, on all the MAX-3-SAT instances on 70 variables from the MSE-2016. For detailed running times see  \Cref{table_70varM3SAT} in \Cref{appendix_runtimesPerInstance}. We compare the running times of \SOSms{} with the corresponding running times of the best results of the MSE-2016. That is, for each instance, we compare \SOSms{} with the participant of the MSE-2016 that was able to solve that specific instance in the least time.
All solvers at the MSE-2016 were tested on an Intel Xeon E5-2620 processor with 2.0 GHz and 3.5~GB RAM\footnote{The full specifications of this machine are available at \url{http://maxsat.ia.udl.cat/machinespecifications/}.}. It is hard to compare the running times of algorithms on different machines, since it depends on many factors such as the processor, RAM, the operating system, et cetera. We choose to consider the difference in clock speeds, i.e., 2.8 GHz vs.~2.0 GHz, as well as wall  time. Therefore, in \Cref{picture_cactusPlotS3V70} and \Cref{table_70varM3SAT}, we have multiplied all the original running times of \SOSms{} with 1.4. 
Additionally, as the solvers in the MSE-2016 were given a maximum time of 30 minutes per instance, we provided \SOSms{} with a maximum of $30/1.4 ( \approx 21.4)$ minutes.
 
 Under these constraints, \SOSms{} is able to solve all 45 instances in category MAX-3-SAT on 70 variables. The participants from the MSE-2016 could solve at most 42 instances. Specifically, they were unable to solve \texttt{s3v70c1500-1.cnf}, \texttt{s3v70c1500-4.cnf} and \texttt{s3v70c1500-5.cnf}. For instances with a lower number of clauses (around 800) however, the best times per instance of the MSE-2016 are much lower than for \SOSms{}. \SOSms{} takes on average 309.10 seconds per solved instance, compared to  367.7 seconds per solved instance for the best MSE-2016 solvers. 
  We have also tested \SOSms{} using the $SOS_p$ basis, on the same instances. With this different basis, \SOSms{} was also able to solve all 45 instances, taking on average 316.7 seconds per instance.

 We also investigate the performance of \SOSms{} on 80 variable MAX-3-SAT.  We consider instances from the MSE-2016 database, although the MSE-2016 did not test 80 variable MAX-3-SAT, and so, it is not known what the best solver per instance. However, we compare \SOSms{} to the CCLS2akms MAX-SAT algorithm. This algorithm first runs CCLS \cite{CCLS} to find a good starting lower bound for the MAX-SAT solution. It then passes this lower bound to the akmaxsat algorithm \cite{kuegel2012improved}, which solves the instance to optimality. Out of all the publicly available solvers, CCLS2akms\footnote{The CCLS2akms algorithm is available at \url{http://www.maxsat.udl.cat/16/solvers/index.html}.} placed highest in the random MAX-SAT category of MSE-2016\footnote{The MSE-2016 results are available at \url{http://maxsat.ia.udl.cat/docs/ms.pdf}.}. 

Results for the 80 variable MAX-3-SAT instances from MSE-2016  are given in \Cref{picture_cactusPlotS3V80}.
The running times per tested MAX-3-SAT instance are provided in \Cref{appendix_runtimesPerInstance}, \Cref{table_80varM3SAT_instances}. Both \SOSms{} and CCLS2akms are provided a maximum of 30 minutes per instance and are tested on the same hardware (our 16 GB RAM laptop), and thus not scaled. For \SOSms{}, we used the $SOS_p$ basis, as $SOS_p^{\mathbf{Q}}$ for $\mathbf{Q} = 40$ and $\mathbf{Q} = 55$ provided worse results. CCLS2akms is able to solve 40 of the 48 instances within the time limit, while \SOSms{} solves 43. \SOSms{} is however slower: it requires on average 772.53 seconds per solved instance, compared to 370.48 per instance. Again, \SOSms{} performs well for instances with a large number of clauses, but requires more time for those with a low number of clauses.

\subsection{Weighted partial MAX-2-SAT instances}
\label{subsection_wpM2S}
We show the performance of our solver on (weighted) partial MAX-2-SAT instances from MSE-2016. For this purpose, we adjust  \SOSms{} by using the  theory outlined in~\Cref{section_wpMS}.

In particular, we perform a B\&B search, using \eqref{eqn_wpMS_SDPformulationRew} to compute upper bounds. 
However, we first preprocess an instance by using the SAT resolution rule \eqref{eqn_SATresolution} on the hard clauses until we find all implied hard clauses of length two or less. Note that this preprocessing may result in  improved upper bounds, as described in  \Cref{subsection_SATresolutionRule}. If hard unit clauses are found this way, we perform unit resolution and continue with the reduced problem. As initial lower bound for our solver, we take the best known lower bound reported in  MSE-2016.

 Note that in case of a (weighted) partial MAX-SAT instance, truth values assigned during branching might create hard unit clauses, which leads to more forced truth assignments.  Additionally, if a node contains few unassigned variables, determining good upper bounds can be done with a small monomial basis, such as $SOS_s^\theta$ \eqref{eqn_monBasisInclusionChain} for small values of $\theta$. Let us describe the choice of $\theta$ through the B\&B tree. We initialize $\theta_{\text{start}} = 0$. At a node in which we compute bounds, we compute an upper bound $\texttt{UB}$ to the (weighted) partial MAX-SAT solution by first using the basis $SOS_s^{\theta_{\text{start}}}$. If $\lfloor \texttt{UB} \rfloor \leq \texttt{LB}$, we prune the current node. If not, we consider the value $\texttt{GAP} = \texttt{UB} - \texttt{LB} > 0$. When $\texttt{GAP} < 10$, we set $\theta = 0.5$ and recompute a stronger upper bound. For $\texttt{GAP} \geq 10$, we recompute an upper bound with $\theta = 1$ instead. In both cases, we use the PRSM variables corresponding to $\theta_{\text{start}}$ as warm starts for the next PRSM. If the upper bound obtained using $\theta \in \{ 0.5,1\}$ is  not equal \texttt{LB}, we set $\theta_{\text{start}} = 0.1$ for the remainder of the algorithm. In \Cref{subsection_searchTreeWpM2S} we demonstrate our branching rule and basis selection on illustrative  examples, see \Cref{fig_searchTree1,fig_searchTree2,fig_searchTree3}.

 In case the tighter upper bound (corresponding to either $\theta = 0.5$ or $\theta = 1$)  does not equal $\texttt{LB}$, we branch at this node. We determine our branching variable in the following way.  For $n^\prime$ ($n^\prime\leq n)$ the number of unassigned variables at the current node, we consider the five truth assignments that create the largest number of hard unit clauses.

 For each of these five truth assignments $\sigma_i$, $i \in [5]$, $\sigma_i \in \{ -n^\prime, \ldots, n^\prime \} {\setminus \{0 \}}$, we compute $\Logical_i = \texttt{unitRes}(\Logical, \sigma_i)$, see \Cref{section_algorithmDescr}. Lastly, we select the truth assignment $\sigma_i$ for which the polynomial $\Logical_i$ has the highest constant term, see \eqref{eqn_constantTermF}. The branching variable is then given by $| \sigma_i |$.  This branching rule aims to create nodes with many assigned variables, due to the hard unit clauses. This allows for setting $\theta_{\text{start}}$ to small values, while still providing strong bounds, see also \Cref{subsection_searchTreeWpM2S}. 
  
The computation of SOS-SDP based upper bounds is expensive, compared to bounding methods used in other MAX-SAT solvers. Therefore, we only compute bounds at selected nodes (see also \cite{wang2019low}). Our selection process is described in detail in \Cref{appendix_branchingRulesWPM2S}.

 We test the described procedure on the 60 unweighted partial MAX-2-SAT, and the 90 weighted partial MAX-2-SAT instances from the MSE-2016, setting a maximum time of 30 minutes per instance. Each instance contains 150 variables and 150 hard clauses. The number of total clauses (both soft and hard) ranges from 1000 to 5000,  and all of them have length two. In the weighted variant, soft clause weights range from 1 up to and including 10. \Cref{table_pM2S_runTimes,table_wpM2S_runTimes} report the running times per instance (rounded to the nearest second), for unweighted and weighted partial MAX-2-SAT, respectively. Here we provide original runtimes, thus not multiplied by some factor. A \mbox{`-'\ } value indicates a time-out of 30 minutes. Variable $m$ denotes the number of total clauses. The $\textbf{Instance}$ row corresponds to the instance file name, as taken from the MSE-2016. For example, $m = 2500$ and $\textbf{Instance} = 1$ refer to \texttt{file\_rpms\_wcnf\_L2\_V150\_C2500\_H150\_1.wcnf} in \Cref{table_pM2S_runTimes}, and \texttt{file\_rwpms\_wcnf\_L2\_V150\_C2500\_H150\_1.wcnf} in \Cref{table_wpM2S_runTimes}.

The table shows that we are able to solve many instances within the 30-minute time limit. This shows the strength of SDP applied also to the (weighted) partial MAX-SAT. Since we are first to solve the (weighted) partial MAX-SAT by using SDP approaches, our work opens new perspectives on solving variants of the MAX-SAT.

\subsection{Partial MAX-3-SAT}
\label{section_pM3S}
We show the quality  of the SDP bounds for partial MAX-3-SAT, based on 10 instances from the MSE-2016. Each instance contains 500 soft clauses and 100 hard clauses, all of length three.

We compute an upper bound for each instance $\Logical$ (on $n$ variables, having hard clauses $C^{\textbf{H}}_p$) in the following way. We first perform SAT resolution \eqref{eqn_SATresolution} on the hard clauses to find all implied hard clauses of length 4 or less. Then, for $\mathbf{Q} \in \mathbb{N}$, we consider the $\mathbf{Q}$ variables that appear in the largest number of (soft and hard) clauses.
Let $V \subseteq [n]$ be the subset indicating those variables. We construct additional hard clauses of the form $C^{\textbf{H}}_p \vee x_i$, for all $i \in V$ and $C^{\textbf{H}}_p \subseteq V$, with $|C^{\textbf{H}}_p| \leq 3$. Note that these additional hard clauses do not change the set of satisfying assignments. On so generated instance we compute an upper bound using the $SOS_p^\mathbf{Q}$ basis, see \eqref{eqn_sosPQBasis}. Note that, as the newly generated clauses $C^{\textbf{H}}_p \vee x_i$ are contained in $V$, they create no additional monomials in the $SOS_p^\mathbf{Q}$. This ensures that the size of the matrix variable remains manageable. Moreover, these additional hard clauses strengthen the bound, see \Cref{subsection_SATresolutionRule}.

We perform this procedure for each instance and $\mathbf{Q} \in \{ 70,75,80,85,90 \}$, for a time limit of 30 minutes. These values of $\mathbf{Q}$ are chosen with the goal of computing the tightest bound at the 30-minute mark. We report results in \Cref{table_pM3S}. Column $\textbf{Inst.}$ reports the instance file name, according to the naming scheme \texttt{file\_rpms\_wcnf\_L3\_V100\_C600\_H100\_}[\textbf{Inst.}]\texttt{.wcnf}. Each instance has 600 clauses, of which 100 are hard, all of length three, on 100 variables. Column $\texttt{LB}$ reports the optimal lower bound, as verified by solvers in the MSE-2016. Column $\mathbf{Q}$ refers to the $SOS_p^\mathbf{Q}$ basis used, of which the number of monomials is reported in column $| \mathbf{x} |$. Column $|C^{\textbf{H}}|$ reports the number of hard clauses used. The next 6 columns (\texttt{GAP} at $\mathbf{X}$ minutes) report the value of the \texttt{GAP} (i.e., \texttt{UB} - \texttt{LB}) at different time points. To compute the values of \texttt{UB}, we compute the smallest eigenvalue of $M$ using the LOBPCG algorithm \cite{knyazev2001toward}, see \eqref{eqn_ADMMmatrixApprox}. Lastly, as a measure of convergence, the final column reports the (absolute) value of the smallest eigenvalue of the matrix variable at the final iteration, multiplied by the size of this matrix. This column thus reports the difference in trace between $\widetilde{M}$ and $M$, see \eqref{eqn_ADMMmatrixApprox}.

Considering the bound at 30 minutes, the $SOS_p^{85}$ basis performs best. A larger basis is unable to converge in 30 minutes, while smaller bases result in  weaker bounds. Since differences in bounds for different $\mathbf{Q}$ are small,   smaller bases  might be more useful in combination with a B\&B scheme.

\subsection{Weighted MAX-3-SAT}
\label{subsection_wM3S}
Lastly, we test \SOSms{} for some weighted MAX-3-SAT instances. 

We consider 10 weighted MAX-3-SAT instances from the MSE-2016. Each instance contains 70 variables, and either 1400 or 1500 weighted soft clauses. These weights range between 1 and 10. There are no hard clauses. 

For the weighted MAX-3-SAT, we compare the running times of \SOSms{} with CCLS2akms,  on the same hardware. For \SOSms{}, we attempted multiple monomial bases, and found that the $SOS_p$ basis required the least time to solve the weighted MAX-3-SAT instances. 

The running times per instance are reported in \Cref{table_wM3S}. Column $m$  reports the number of clauses, and \textbf{Inst.}~reports the instance, according to the scheme \texttt{s3v70c}[$m$]\texttt{-}[\textbf{Inst.}]\texttt{.wcnf}.
\SOSms{} is able to solve three instances in less time than CCLS2akms and can solve 9 of the 10 instances in less than 30 minutes. This demonstrates that \SOSms{} is well suited for solving weighted MAX-3-SAT instances with a large number of clauses.

\section{Conclusions and future work}
\label{section_Conclusion}

In this paper we consider SOS optimization for solving the MAX-SAT and weighted partial MAX-SAT. We design an SOS-SDP based exact MAX-SAT solver, called  SOS-MS.  Our  solver is competitive with the  best-known solvers on solving various (weighted partial) MAX-SAT instances. We are also first to compute SDP bounds for the  weighted partial MAX-SAT.

In \Cref{section_SATasSDPfeas} we propose a family of semidefinite feasibility problems \ref{eqn_RFprogramme} and show that one member of this family provides the rank two guarantee, see \Cref{thm_NewFsetRank2Guarantee}. That is, 
the existence of a feasible rank two matrix implies satisfiability of the corresponding SAT instance.
In \Cref{section_SOSandMaxSat}, we outline  the SOS approach to the MAX-SAT, due to \citeauthor{van2008sums} \cite{van2008sums} and propose new bases. We introduce the $SOS_s^{\theta}$ and $SOS^\mathbf{Q}_p$ bases, see \eqref{eqn_monBasisInclusionChain}, and provide  several  theoretical results related to these bases, see~\Cref{lemma_unitConstraintsCoeff,lemma_cardXGammaUpperBounds}. Clearly, the strength of the SOS-SDP based  relaxations and the required time to compute them depend on the chosen monomial basis.
The  SOS-SDP relaxation  for the MAX-SAT is denoted by~\ref{eqn_SumSquaresExplicit}.
 We consider MAX-SAT resolution in~\Cref{section_Resolution} and show  that resolution might not be beneficial for the SOS approach applied to the MAX-SAT.

In \Cref{section_dualRelation}, we elegantly show a connection between the SOS approach to the MAX-SAT and the family of semidefinite feasibility problems~\ref{eqn_RFprogramme}. This is done by deriving the dual problem to~\ref{eqn_SumSquaresExplicit}, see~\Cref{thm_dualEquivalence}. 
In \Cref{section_PRSMsection}, we propose PRSM for solving \ref{eqn_SumSquaresExplicit}. We show that PRSM is well suited for exploiting the structure of \ref{eqn_SumSquaresExplicit}, in particular, the unit constraints, see \eqref{eqn_unitCons}. We thus provide an affirmative answer to the key question posed by \citeauthor{van2008sums}~\cite{van2008sums}: \textit{``Whether SDP software can be developed dealing with unit constraints efficiently?"}. 

We extend the SOS approach for the MAX-SAT  to the weighted partial MAX-SAT in \Cref{section_wpMS}. Here, the variables are restricted to satisfy a set of hard clauses. We show that such hard clauses can be incorporated in the SOS programme \ref{eqn_SumSquaresExplicit} by adding scalar variables. We show in \Cref{section_pMaxSatSolutionMethod} that the resulting programme \eqref{eqn_wpMS_SDPformulation} is also well suited for the PRSM.

In \Cref{section_algorithmDescr}, we provide implementation details of our SOS-SDP based MAX-SAT solver, whose  pseudocode is given  in \mbox{\Cref{alg_SosSdpMaxSat}}. \SOSms{} is a  B\&B algorithm and has two crucial components. 
The first one is the use of warm starts to programme \ref{eqn_SumSquaresExplicit}, in order to quickly obtain strong  bounds. The second one is its ability to quickly parse 
 \ref{eqn_SumSquaresExplicit}, as outlined in \Cref{section_Parsing}.
Our algorithm parses a basis that contains 4,439,449 monomials in (approximately) one second (!).

In \Cref{section_NumericalResults} we provide extensive numerical results that verify efficiency of our exact solver \SOSms{} and
quality of SOS upper bounds. We show that \SOSms{} can solve a variety of MAX-SAT instances in reasonable time, while solving some instances faster than the best solvers in the MSE-2016. We show that the $SOS^\mathbf{Q}_p$ bases \eqref{eqn_sosPQBasis} are able to prove optimality of some MAX-SAT instances, and that the parameter $\mathbf{Q}$ provides the option to adjust the trade-off between quality of the bounds and computation time. We also test our B\&B algorithm for (weighted) partial MAX-SAT instances in \Cref{subsection_wpM2S,subsection_wM3S}. Our solver is able to solve many (weighted) partial MAX-SAT instances in a reasonable time.

This paper has demonstrated the strong performance of \SOSms{} on (weighted partial) MAX-SAT instances from the MSE random track. In the future, we hope to also solve instances with \SOSms{} from the so-called \textit{industrial} and \textit{crafted} tracks. These tracks currently impose two challenges on \SOSms{}. Firstly, these instances induce prohibitively large $SOS_p$ bases, which hinders the computation of strong bounds. To solve this, we require a more sophisticated method for choosing a smaller, manageable, basis, like $SOS_s^\theta$. Secondly, these instances can possess clauses of length $k$, where $k \geq 4$. This is problematic in the current settings, since $F_\Logical$, see \eqref{eqn_FbFunctionDefinition}, is a $k$th degree polynomial, which requires a large basis to be represented. One possible way to overcome these challenges is through exploiting the structure present in these instances. For example, function $F_\Logical$ might have few nonzero coefficients, which allows for finding $SOS$ decompositions with small monomial bases, see also \cite{ahmadi2017improving}.

\begin{landscape}
\begin{table}
    \centering
\begin{tabular}{llllllllllllll}
    \toprule
     & & \multicolumn{2}{c}{$SOS_p$}     & \multicolumn{2}{c}{$SOS^{40}_p$}  &   \multicolumn{2}{c}{$SOS^{50}_p$}    &   \multicolumn{2}{c}{$SOS^{60}_p$}    &   \multicolumn{2}{c}{$SOS^{70}_p$} &   \multicolumn{2}{c}{$SOS^{110}_p$}  \\
     \cmidrule(lr){3-4} \cmidrule(lr){5-6} \cmidrule(l){7-8} \cmidrule(l){9-10} \cmidrule(l){11-12} \cmidrule(l){13-14}
\textbf{Instance}     & \texttt{LB} & \texttt{UB} & $\! \! \! \! \!  \textbf{Iter.}$ & \texttt{UB} & $ \! \! \! \! \! \! \textbf{Iter.}$ & \texttt{UB} & $\! \! \! \! \!  \textbf{Iter.}$ & \texttt{UB} & $\! \! \! \! \! \textbf{Iter.}$ & \texttt{UB} & $\!  \! \! \! \! \textbf{Iter.}$  
& \texttt{UB} & $\!  \! \! \! \! \textbf{Iter.}$
\\ \hline
\texttt{s3v70c800-1}    & 769         & 771.29      & 2   & 770.79      & 2   & 770.67      & 2 & 769.99      & 3.9 &             &    && \\
\texttt{s3v70c800-3}    & 770         & 770.996     & 3   &             &     &             &     &             &     &             &    && \\
\texttt{s3v70c800-4}    & 772         & 772.99      & 2.8 &             &     &             &     &             &     &             &     &&\\
\texttt{s3v70c900-4}    & 861         & 863.11      & 2   & 862.63      & 2   & 861.99      & 3.3 &             &     &             &     &&\\
\texttt{s3v70c1000-1}   & 953         & 954.89      & 2   & 954.70       & 2   & 954.65      & 2   & 953.999     & 3.5 &             &    && \\
\texttt{s3v70c1000-2}   & 957         & 957.99      & 2.7 &             &     &             &     &             &     &             &     &&\\
\texttt{s3v70c1000-5}   & 958         & 958.996     & 2.2 &             &     &             &     &             &     &             &     &&\\
\texttt{s3v70c1100-4}   & 1048        & 1049.03     & 4   & 1048.997    & 3.2 &             &     &             &     &             &     &&\\
\texttt{s3v70c1500-2}   & 1411        & 1411.998    & 2.8 &             &     &             &     &             &     &             &     &&\\
\texttt{s3v90c900-5}    & 875         & 877.44      & 2   & 875.99      & 2.9 &             &     &             &     &             &     &&\\
\texttt{s3v90c900-7}    & 873         & 877.16      & 2   & 875.56      & 2   & 875.05      & 2   & 874.62      & 2   & 873.995     & 2.7 &&\\
\texttt{s3v110c1000-7}  & 969         & 984.01      & 2   & 980.93      & 2   & 979.77      & 2   & 978.63      & 2   & 977.61      & 2   & 974.49
 & 2\\
\texttt{s3v110c1100-10} & 1064        & 1076.81     & 2   & 1074.15     & 2   & 1073.03     & 2   & 1071.98     & 2   & 1071.01     & 2   & 1068.32 & 2 \\
    \cmidrule[\heavyrulewidth]{1-14}
\end{tabular}
\caption{Comparison of the MAX-3-SAT bounds attained by different monomial bases.}
    \label{table_basisBoundsComparison}
\end{table}
\end{landscape}

{
    \centering
\begin{tabular}{lllllll}
\toprule
\textbf{Instance}        & $\texttt{LB}$ & $\texttt{UB}$ & \textbf{T. (s)} & $\mathbf{Q}$ & $| \mathbf{x}|$  & $\textbf{Iter.}$ \\ \hline
$\texttt{s3v70c800-1}$     & 769           & 769.99        & 243.0         & 60           & 2181 & 3.9              \\
$\texttt{s3v70c800-3}$     & 770           & 770.996       & 85.0          & 0            & 1603 & 3.0              \\
$\texttt{s3v70c800-4}$ & 772           & 772.99        & 80.7          & 0            & 1588 & 2.8              \\
$\texttt{s3v70c900-4}$     & 861           & 861.99        & 168.4         & 50           & 2022 & 3.3              \\
$\texttt{s3v70c1000-1}$    & 953           & 953.999       & 236.6         & 60           & 2244 & 3.5              \\
$\texttt{s3v70c1000-2}$    & 957           & 957.99        & 102.4         & 0            & 1810 & 2.7              \\
$\texttt{s3v70c1000-5}$    & 958           & 958.996       & 83.6          & 0            & 1798 & 2.2              \\
$\texttt{s3v70c1100-4}$    & 1048          & 1048.997      & 164.4         & 40           & 2014 & 3.2              \\
$\texttt{s3v70c1500-2}$    & 1411          & 1411.998      & 165.7         & 0            & 2130 & 2.8              \\
$\texttt{s3v90c900-5}$     & 875           & 875.99        & 218.4         & 40           & 2366 & 2.9              \\
$\texttt{s3v90c900-7}$     & 873           & 873.995       & 519.9         & 70           & 3185 & 2.7              \\
$\texttt{s3v110c1000-7}$   & 969           & 974.49        & 3089.3        & 110           & 6106 & 2.0              \\
$\texttt{s3v110c1100-10}$  & 1064          & 1068.32       & 3232.5
        & 110           & 6106 & 2.0             \\
\cmidrule[\heavyrulewidth]{1-7}
\end{tabular}
\captionof{table}{Best upper bounds for the MAX-3-SAT per instance}
    \label{table_bestUBperInstance}
    }
    
\vspace{10pt}

\begin{center}
\captionsetup{justification=centering}
    \captionof{figure}{\SOSms{} on 70 variable MAX-3-SAT (basis $SOS_p^{50}$)}
    \label{picture_cactusPlotS3V70}
\begin{tikzpicture}[trim axis left, trim axis right]
\begin{axis}[
    ylabel style={rotate=-90},
    xlabel={Instances solved},
    ylabel={Time (s)},
    xmin=0, xmax=45,
    ymin=0, ymax=1500,
    xtick={0,15,30,45},
    ytick={0,500,1000,1500},
    legend pos=north west,
    ymajorgrids=true,
    grid style=dashed,
]

\addplot[
    color=blue,
    mark=square,
    ]
    coordinates{
    (1.00,4.61)(2.00,5.00)(3.00,7.06)(4.00,13.83)(5.00,14.14)(6.00,15.14)(7.00,16.76)(8.00,26.16)(9.00,27.20)(10.00,47.84)(11.00,53.09)(12.00,54.46)(13.00,56.66)(14.00,59.97)(15.00,67.52)(16.00,78.94)(17.00,81.33)(18.00,87.16)(19.00,122.22)(20.00,123.08)(21.00,136.06)(22.00,169.47)(23.00,184.75)(24.00,227.34)(25.00,284.52)(26.00,316.19)(27.00,322.68)(28.00,331.47)(29.00,544.98)(30.00,608.15)(31.00,650.70)(32.00,705.64)(33.00,708.07)(34.00,752.93)(35.00,893.82)(36.00,946.71)(37.00,990.89)(38.00,1011.74)(39.00,1062.63)(40.00,1139.98)(41.00,1182.99)(42.00,1309.01)
    };

\addplot[
    color=red,
    mark=o,
    ]
    coordinates {
    (1.00,100.70)(2.00,118.23)(3.00,122.33)(4.00,122.36)(5.00,125.14)(6.00,129.03)(7.00,129.06)(8.00,132.43)(9.00,143.14)(10.00,145.19)(11.00,145.49)(12.00,153.01)(13.00,161.94)(14.00,176.46)(15.00,179.74)(16.00,184.05)(17.00,191.68)(18.00,193.14)(19.00,219.21)(20.00,230.81)(21.00,235.61)(22.00,244.50)(23.00,249.01)(24.00,270.72)(25.00,278.93)(26.00,280.31)(27.00,281.03)(28.00,298.63)(29.00,299.69)(30.00,312.57)(31.00,321.44)(32.00,367.58)(33.00,378.84)(34.00,381.82)(35.00,433.93)(36.00,466.85)(37.00,473.66)(38.00,520.31)(39.00,531.20)(40.00,541.27)(41.00,642.74)(42.00,645.56)(43.00,739.14)(44.00,756.47)(45.00,854.36)
    };
    
    \addlegendentry{Best MSE-2016}
    \addlegendentry{\SOSms{}}
\end{axis}
\end{tikzpicture}
\end{center}

\vspace{10pt}

\begin{center}
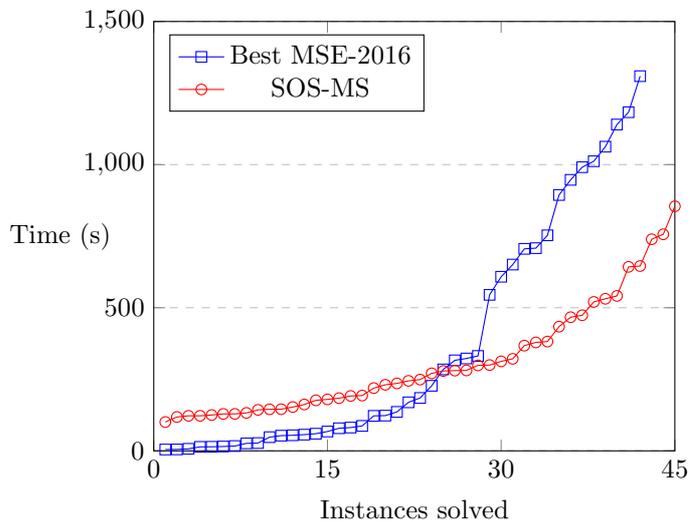

\captionsetup{justification=centering}
    \captionof{figure}{\SOSms{} on 80 variable MAX-3-SAT (basis $SOS_p$)}
    \label{picture_cactusPlotS3V80}
\begin{tikzpicture}[trim axis left, trim axis right]
\begin{axis}[
    ylabel style={rotate=-90},
    xlabel={Instances solved},
    ylabel={Time (s)},
    xmin=0, xmax=45,
    ymin=0, ymax=1800,
    xtick={0,15,30,45},
    ytick={0,500,1000,1500},
    legend pos=north west,
    ymajorgrids=true,
    grid style=dashed,
]

\addplot[
    color=blue,
    mark=square,
    ]
    coordinates{
    (1.00,1.52)(2.00,4.95)(3.00,5.16)(4.00,6.98)(5.00,13.29)(6.00,18.21)(7.00,25.93)(8.00,35.73)(9.00,35.94)(10.00,39.26)(11.00,43.04)(12.00,44.68)(13.00,67.33)(14.00,67.35)(15.00,81.99)(16.00,93.09)(17.00,99.57)(18.00,151.73)(19.00,168.22)(20.00,181.21)(21.00,197.40)(22.00,264.34)(23.00,273.86)(24.00,333.18)(25.00,344.01)(26.00,355.26)(27.00,425.80)(28.00,483.51)(29.00,486.97)(30.00,522.42)(31.00,668.63)(32.00,790.99)(33.00,806.55)(34.00,868.28)(35.00,937.81)(36.00,992.42)(37.00,1044.26)(38.00,1131.22)(39.00,1314.20)(40.00,1393.08)
    };

\addplot[
    color=red,
    mark=o,
    ]
    coordinates {
    (1.00,144.89)(2.00,201.26)(3.00,279.54)(4.00,298.67)(5.00,305.24)(6.00,327.52)(7.00,330.12)(8.00,366.26)(9.00,388.10)(10.00,402.94)(11.00,405.84)(12.00,423.89)(13.00,428.29)(14.00,441.66)(15.00,443.53)(16.00,489.35)(17.00,542.94)(18.00,557.18)(19.00,585.08)(20.00,587.51)(21.00,592.83)(22.00,614.47)(23.00,638.47)(24.00,641.53)(25.00,646.95)(26.00,695.00)(27.00,730.48)(28.00,799.49)(29.00,872.60)(30.00,923.21)(31.00,987.23)(32.00,1030.56)(33.00,1123.17)(34.00,1174.15)(35.00,1307.67)(36.00,1316.13)(37.00,1433.79)(38.00,1559.63)(39.00,1624.88)(40.00,1660.14)(41.00,1665.65)(42.00,1716.55)(43.00,1729.55)
    };
    
    \addlegendentry{CCLS2akms}
    \addlegendentry{\SOSms{}}
\end{axis}
\end{tikzpicture}
\end{center}

\begin{table}[h]
\centering
\begin{tabular}{clllllllllll}
\hline
\multicolumn{1}{l}{} &  & \multicolumn{10}{c}{\textbf{Instance}} \\
\multicolumn{1}{l}{} &  & 0 & 1 & 2 & 3 & 4 & 5 & 6 & 7 & 8 & 9 \\ \hline
\multirow{6}{*}{$m$} & \multicolumn{1}{l|}{2500} & 59 & 177 & 237 & 536 & 75 & 45 & 400 & 361 & 320 & 61 \\
 & \multicolumn{1}{l|}{3000} & 327 & 329 & 96 & 244 & 103 & 554 & 675 & 86 & 339 & 221 \\
 & \multicolumn{1}{l|}{3500} & 322 & 223 & 112 & 134 & 188 & 620 & 23 & 469 & 144 & 252 \\
 & \multicolumn{1}{l|}{4000} & 85 & 5 & 107 & 127 & 347 & 662 & 762 & 320 & - & 289 \\
 & \multicolumn{1}{l|}{4500} & 679 & 334 & 592 & 251 & 732 & 470 & 145 & 223 & 318 & 135 \\
 & \multicolumn{1}{l|}{5000} & 159 & 116 & 663 & 1258 & 975 & 226 & 473 & 598 & 200 & 105 \\ \hline
\end{tabular}
\caption{Unweighted partial MAX-2-SAT running times (seconds)}
\label{table_pM2S_runTimes}
\end{table}

\begin{table}[]
\centering
\begin{tabular}{clllllllllll}
\hline
\multicolumn{1}{l}{} &                         & \multicolumn{10}{c}{\textbf{Instance}}                              \\
  \multicolumn{1}{l}{} &                         & 0    & 1     & 2    & 3    & 4    & 5    & 6    & 7    & 8    & 9   \\ \hline
\multirow{9}{*}{$m$} & \multicolumn{1}{l|}{1000} & 116  & 164   & 61   & 40   & 54   & 85   & 55   & 196  & 344  & 68  \\
                     & \multicolumn{1}{l|}{1500} & 608  & 144   & 447  & 164  & 529  & 190  & 105  & 269  & 349  & 370 \\
                     & \multicolumn{1}{l|}{2000} & 325  & 495   & 326  & 222  & 134  & 233  & 124  & 156  & 178  & 631 \\
                     & \multicolumn{1}{l|}{2500} & 103  & 544   & 282  & 1029 & 318  & 315  & 575  & 926  & 619  & 118 \\
                     & \multicolumn{1}{l|}{3000} & -    & 1341  & 446  & -    & 249  & -    & 1220 & 422  & 1624 & 618 \\
                     & \multicolumn{1}{l|}{3500} & 1667 & 1195  & 1022 & 450  & 1327 & 1351 & 130  & 771  & 196  & 229 \\
                     & \multicolumn{1}{l|}{4000} & 91   & 5     & 208  & 1108 & 930  & -    & -    & 1048 & -    & 338 \\
                     & \multicolumn{1}{l|}{4500} & -    & -     & 1601 & 1220 & -    & 732  & 518  & 1347 & 799  & 965 \\
                     & \multicolumn{1}{l|}{5000} & 338  & 980   & -    & 686  & -    & -    & -    & -    & 222  & 294 \\ \hline
\end{tabular}
\caption{Weighted partial MAX-2-SAT running times (seconds)}
\label{table_wpM2S_runTimes}
\end{table}

\begin{table}[]
\centering
\begin{tabular}{clll}
\hline
\multicolumn{1}{l}{} &  & \multicolumn{2}{c}{\textbf{Running time   (s)}} \\ \cline{3-4} 
\multicolumn{1}{l}{$m$} & \textbf{Inst.} & \textbf{\SOSms{}} & \textbf{CCLS2akms} \\ \hline
\multirow{5}{*}{1400} & 1 & 438.71 & 322.62 \\
 & 2 & 795.71 & 278.78 \\
 & 3 & 696.16 & 752.95 \\
 & 4 & 852.51 & 592.82 \\
 & 5 & 1250.02 & 1054.16 \\ \hline
\multirow{5}{*}{1500} & 1 & 499.26 & 885.66 \\
 & 2 & 653.51 & 1011.90 \\
 & 3 & 399.28 & 292.60 \\
 & 4 & 791.08 & 748.03 \\
 & 5 & $>1800$ & 1323.78 \\ \hline
\end{tabular}
\caption{Weighted 70 variable MAX-3-SAT}
\label{table_wM3S}
\end{table}

\begin{table}[]
\centering
\begin{tabular}{cclllllllllc}
\hline
\multicolumn{1}{l}{} & \multicolumn{1}{l}{} &  &  &  & \multicolumn{6}{c}{\texttt{GAP} at   $\mathbf{X}$ minutes} &  \\ \cline{6-11}
\multicolumn{1}{l}{\textbf{Inst.}} & \multicolumn{1}{l}{\texttt{LB}} & $\mathbf{Q}$ & $|\mathbf{x}|$ & $|C^{\textbf{H}}|$ & 5 & 10 & 15 & 20 & 25 & 30 & $\lambda_\text{min} \cdot | \mathbf{x}| (\times 10^{-2})$ \\ \hline
\multirow{5}{*}{0} & \multirow{5}{*}{492} & 70 & 3288 & 5188 & 4.65 & 3.55 & 3.16 & 2.95 & 2.82 & 2.72 & 4.01 \\
 &  & 75 & 3507 & 6124 & 4.81 & 3.51 & 3.07 & 2.82 & 2.67 & 2.56 & 5.82 \\
 &  & 80 & 3754 & 7140 & 5.35 & 3.61 & 3.03 & 2.72 & 2.53 & 2.40 & 9.64 \\
 &  & 85 & 4035 & 7908 & 6.31 & 4.08 & 3.20 & 2.78 & 2.52 & 2.34 & 18.54 \\
 &  & 90 & 4340 & 8629 & 7.54 & 4.76 & 3.70 & 3.14 & 2.78 & 2.53 & 41.27 \\ \hline
\multirow{5}{*}{1} & \multirow{5}{*}{492} & 70 & 3334 & 4412 & 4.37 & 3.48 & 3.16 & 2.97 & 2.84 & 2.75 & 2.80 \\
 &  & 75 & 3548 & 5509 & 5.03 & 3.58 & 3.13 & 2.88 & 2.72 & 2.60 & 4.49 \\
 &  & 80 & 3780 & 6331 & 5.56 & 3.66 & 3.11 & 2.81 & 2.63 & 2.50 & 6.95 \\
 &  & 85 & 4054 & 7377 & 5.84 & 3.60 & 2.97 & 2.65 & 2.44 & 2.30 & 9.41 \\
 &  & 90 & 4352 & 8329 & 7.77 & 4.44 & 3.46 & 2.97 & 2.67 & 2.46 & 25.62 \\ \hline
\multirow{5}{*}{2} & \multirow{5}{*}{492} & 70 & 3324 & 5067 & 4.09 & 2.99 & 2.58 & 2.35 & 2.20 & 2.10 & 5.29 \\
 &  & 75 & 3536 & 5993 & 4.47 & 3.05 & 2.56 & 2.29 & 2.11 & 1.98 & 9.78 \\
 &  & 80 & 3776 & 7076 & 4.92 & 3.18 & 2.60 & 2.27 & 2.06 & 1.91 & 15.80 \\
 &  & 85 & 4046 & 7839 & 5.05 & 3.25 & 2.51 & 2.16 & 1.93 & 1.77 & 23.72 \\
 &  & 90 & 4346 & 8729 & 7.02 & 4.12 & 3.18 & 2.67 & 2.34 & 2.10 & 52.53 \\ \hline
\multirow{5}{*}{3} & \multirow{5}{*}{491} & 70 & 3296 & 4964 & 5.33 & 4.42 & 4.04 & 3.82 & 3.68 & 3.58 & 1.58 \\
 &  & 75 & 3528 & 5741 & 5.33 & 4.26 & 3.84 & 3.60 & 3.44 & 3.33 & 2.24 \\
 &  & 80 & 3773 & 6501 & 5.79 & 4.26 & 3.76 & 3.49 & 3.31 & 3.18 & 3.47 \\
 &  & 85 & 4038 & 7475 & 5.94 & 4.15 & 3.62 & 3.33 & 3.14 & 3.00 & 4.56 \\
 &  & 90 & 4343 & 8345 & 8.18 & 4.92 & 4.02 & 3.58 & 3.31 & 3.12 & 10.29 \\ \hline
\multirow{5}{*}{4} & \multirow{5}{*}{493} & 70 & 3304 & 4832 & 4.10 & 3.14 & 2.74 & 2.52 & 2.38 & 2.28 & 3.24 \\
 &  & 75 & 3529 & 5815 & 4.54 & 3.17 & 2.70 & 2.44 & 2.26 & 2.13 & 5.41 \\
 &  & 80 & 3775 & 6811 & 4.62 & 3.06 & 2.55 & 2.27 & 2.08 & 1.95 & 7.06 \\
 &  & 85 & 4052 & 7641 & 5.23 & 3.17 & 2.55 & 2.21 & 1.99 & 1.84 & 11.19 \\
 &  & 90 & 4352 & 8695 & 7.53 & 4.16 & 3.11 & 2.60 & 2.28 & 2.05 & 25.95 \\ \hline
\multirow{5}{*}{5} & \multirow{5}{*}{492} & 70 & 3287 & 5207 & 3.96 & 3.15 & 2.82 & 2.64 & 2.52 & 2.44 & 1.80 \\
 &  & 75 & 3506 & 6071 & 4.06 & 3.15 & 2.74 & 2.52 & 2.37 & 2.27 & 2.82 \\
 &  & 80 & 3760 & 6851 & 4.40 & 3.11 & 2.66 & 2.40 & 2.24 & 2.12 & 4.29 \\
 &  & 85 & 4037 & 7927 & 4.99 & 3.29 & 2.76 & 2.46 & 2.26 & 2.12 & 7.77 \\
 &  & 90 & 4347 & 8648 & 6.70 & 4.05 & 3.25 & 2.83 & 2.54 & 2.34 & 22.01 \\ \hline
\multirow{5}{*}{6} & \multirow{5}{*}{493} & 70 & 3298 & 4633 & 3.70 & 2.84 & 2.49 & 2.29 & 2.16 & 2.07 & 3.60 \\
 &  & 75 & 3508 & 5745 & 3.94 & 2.82 & 2.42 & 2.19 & 2.03 & 1.93 & 5.45 \\
 &  & 80 & 3744 & 6659 & 4.27 & 2.82 & 2.38 & 2.13 & 1.97 & 1.84 & 9.95 \\
 &  & 85 & 4018 & 7643 & 4.87 & 3.11 & 2.38 & 2.06 & 1.86 & 1.71 & 15.91 \\
 &  & 90 & 4332 & 8523 & 7.29 & 3.97 & 3.03 & 2.55 & 2.25 & 2.02 & 40.32 \\ \hline
\multirow{5}{*}{7} & \multirow{5}{*}{492} & 70 & 3345 & 4980 & 5.38 & 4.44 & 4.05 & 3.83 & 3.68 & 3.58 & 1.26 \\
 &  & 75 & 3562 & 6117 & 5.48 & 4.42 & 3.97 & 3.72 & 3.55 & 3.43 & 1.75 \\
 &  & 80 & 3803 & 7056 & 5.89 & 4.42 & 3.92 & 3.62 & 3.43 & 3.30 & 2.50 \\
 &  & 85 & 4062 & 7901 & 5.88 & 4.25 & 3.73 & 3.43 & 3.23 & 3.09 & 3.02 \\
 &  & 90 & 4359 & 8709 & 7.50 & 4.68 & 3.93 & 3.56 & 3.31 & 3.12 & 8.27 \\ \hline
\multirow{5}{*}{8} & \multirow{5}{*}{495} & 70 & 3258 & 5155 & 3.83 & 2.65 & 2.23 & 1.99 & 1.84 & 1.72 & 9.72 \\
 &  & 75 & 3484 & 5803 & 4.09 & 2.65 & 2.17 & 1.90 & 1.72 & 1.59 & 15.76 \\
 &  & 80 & 3735 & 6722 & 4.85 & 3.02 & 2.29 & 1.96 & 1.75 & 1.59 & 27.59 \\
 &  & 85 & 4008 & 7547 & 5.11 & 3.08 & 2.37 & 1.90 & 1.66 & 1.50 & 39.49 \\
 &  & 90 & 4324 & 8770 & 7.55 & 4.18 & 3.09 & 2.52 & 2.17 & 1.92 & 76.63 \\ \hline
\multirow{5}{*}{9} & \multirow{5}{*}{491} & 70 & 3341 & 5206 & 5.33 & 4.39 & 4.00 & 3.78 & 3.63 & 3.53 & 1.93 \\
 &  & 75 & 3569 & 5926 & 5.43 & 4.31 & 3.85 & 3.59 & 3.42 & 3.31 & 2.59 \\
 &  & 80 & 3811 & 7158 & 5.78 & 4.27 & 3.75 & 3.46 & 3.27 & 3.14 & 3.60 \\
 &  & 85 & 4078 & 8254 & 5.92 & 4.17 & 3.63 & 3.32 & 3.11 & 2.97 & 4.55 \\
 &  & 90 & 4367 & 9169 & 7.57 & 4.80 & 4.00 & 3.56 & 3.28 & 3.08 & 11.35 \\ \hline
\end{tabular}
\caption{Bounds for partial MAX-3-SAT instances}
\label{table_pM3S}
\end{table}

\bibliographystyle{abbrvnat}
\bibliography{myReferences}

\begin{thebibliography}{53}
\providecommand{\natexlab}[1]{#1}
\providecommand{\url}[1]{\texttt{#1}}
\expandafter\ifx\csname urlstyle\endcsname\relax
  \providecommand{\doi}[1]{doi: #1}\else
  \providecommand{\doi}{doi: \begingroup \urlstyle{rm}\Url}\fi

\bibitem[Abram{\'e} and Habet(2014)]{abrame2014ahmaxsat}
A.~Abram{\'e} and D.~Habet.
\newblock {AHMAXSAT}: Description and evaluation of a branch and bound
  {M}ax-{SAT} solver.
\newblock \emph{Journal on Satisfiability, Boolean Modeling and Computation},
  9\penalty0 (1):\penalty0 89--128, 2014.

\bibitem[Ahmadi et~al.(2017)Ahmadi, Hall, Papachristodoulou, Saunderson, and
  Zheng]{ahmadi2017improving}
A.~A. Ahmadi, G.~Hall, A.~Papachristodoulou, J.~Saunderson, and Y.~Zheng.
\newblock Improving efficiency and scalability of sum of squares optimization:
  Recent advances and limitations.
\newblock In \emph{2017 IEEE 56th annual conference on decision and control
  (CDC)}, pages 453--462. IEEE, 2017.

\bibitem[Anjos(2004{\natexlab{a}})]{anjos2004proofs}
M.~F. Anjos.
\newblock Proofs of unsatisfiability via semidefinite programming.
\newblock In \emph{Operations Research Proceedings 2003}, pages 308--315.
  Springer, 2004{\natexlab{a}}.

\bibitem[Anjos(2004{\natexlab{b}})]{anjos2004semidefinite}
M.~F. Anjos.
\newblock On semidefinite programming relaxations for the satisfiability
  problem.
\newblock \emph{Mathematical Methods of Operations Research}, 60\penalty0
  (3):\penalty0 349--367, 2004{\natexlab{b}}.

\bibitem[Anjos(2005)]{anjos2005improved}
M.~F. Anjos.
\newblock An improved semidefinite programming relaxation for the
  satisfiability problem.
\newblock \emph{Mathematical Programming}, 102\penalty0 (3):\penalty0 589--608,
  2005.

\bibitem[Anjos(2006)]{anjos2006semidefinite}
M.~F. Anjos.
\newblock Semidefinite optimization approaches for satisfiability and
  maximum-satisfiability problems.
\newblock \emph{Journal on Satisfiability, Boolean Modeling and Computation},
  1\penalty0 (1):\penalty0 1--47, 2006.

\bibitem[Anjos(2007)]{anjos2008extended}
M.~F. Anjos.
\newblock An extended semidefinite relaxation for satisfiability.
\newblock \emph{Journal on Satisfiability, Boolean Modeling and Computation},
  4\penalty0 (1):\penalty0 15--31, 2007.

\bibitem[Anjos and Wolkowicz(2002)]{anjos2002geometry}
M.~F. Anjos and H.~Wolkowicz.
\newblock Geometry of semidefinite {M}ax-{C}ut relaxations via matrix ranks.
\newblock \emph{Journal of Combinatorial Optimization}, 6\penalty0
  (3):\penalty0 237--270, 2002.

\bibitem[As{\'\i}n~Ach{\'a} and Nieuwenhuis(2014)]{asin2014curriculum}
R.~As{\'\i}n~Ach{\'a} and R.~Nieuwenhuis.
\newblock Curriculum-based course timetabling with {SAT} and {MaxSAT}.
\newblock \emph{Annals of Operations Research}, 218\penalty0 (1):\penalty0
  71--91, 2014.

\bibitem[Bonet et~al.(2007)Bonet, Levy, and Manya]{bonet2007resolution}
M.~L. Bonet, J.~Levy, and F.~Manya.
\newblock Resolution for {M}ax-{SAT}.
\newblock \emph{Artificial Intelligence}, 171\penalty0 (8-9):\penalty0
  606--618, 2007.

\bibitem[Boyd et~al.(2011)Boyd, Parikh, Chu, Peleato, and Eckstein]{Boyd}
S.~Boyd, N.~Parikh, E.~Chu, B.~Peleato, and J.~Eckstein.
\newblock Distributed optimization and statistical learning via the alternating
  direction method of multipliers.
\newblock \emph{Foundations and Trends{\textregistered} in Machine Learning},
  3\penalty0 (1):\penalty0 1--122, 2011.

\bibitem[Cook(1971)]{cook1971complexity}
S.~A. Cook.
\newblock The complexity of theorem-proving procedures.
\newblock In \emph{Proceedings of the third annual ACM symposium on Theory of
  computing}, pages 151--158, 1971.

\bibitem[de~Klerk et~al.(2000)de~Klerk, van Maaren, and
  Warners]{de2000relaxations}
E.~de~Klerk, H.~van Maaren, and J.~Warners.
\newblock Relaxations of the satisfiability problem using semidefinite
  programming.
\newblock \emph{Journal of automated reasoning}, 24\penalty0 (1):\penalty0
  37--65, 2000.

\bibitem[de~Meijer and Sotirov(2021)]{de2021sdp}
F.~de~Meijer and R.~Sotirov.
\newblock {SDP}-based bounds for the quadratic cycle cover problem via
  cutting-plane augmented {L}agrangian methods and reinforcement learning:
  Informs journal on computing meritorious paper awardee.
\newblock \emph{INFORMS Journal on Computing}, 33\penalty0 (4):\penalty0
  1262--1276, 2021.

\bibitem[Drusvyatskiy et~al.(2017)Drusvyatskiy, Li, and
  Wolkowicz]{drusvyatskiy2017note}
D.~Drusvyatskiy, G.~Li, and H.~Wolkowicz.
\newblock A note on alternating projections for ill-posed semidefinite
  feasibility problems.
\newblock \emph{Mathematical Programming}, 162\penalty0 (1):\penalty0 537--548,
  2017.

\bibitem[Gabay and Mercier(1976)]{Gaby}
D.~Gabay and B.~Mercier.
\newblock A dual algorithm for the solution of nonlinear variational problems
  via finite element approximations.
\newblock \emph{Computers and Mathematics with Applications}, 2\penalty0
  (7):\penalty0 17--–40, 1976.

\bibitem[Gattermann et~al.(2016)Gattermann, Gro{\ss}mann, Nachtigall, and
  Sch{\"o}bel]{gattermann2016integrating}
P.~Gattermann, P.~Gro{\ss}mann, K.~Nachtigall, and A.~Sch{\"o}bel.
\newblock Integrating passengers' routes in periodic timetabling: a {SAT}
  approach.
\newblock In \emph{16th Workshop on Algorithmic Approaches for Transportation
  Modelling, Optimization, and Systems (ATMOS 2016)}. Schloss
  Dagstuhl-Leibniz-Zentrum fuer Informatik, 2016.

\bibitem[Goemans and Williamson(1995)]{goemans1995improved}
M.~X. Goemans and D.~P. Williamson.
\newblock Improved approximation algorithms for maximum cut and satisfiability
  problems using semidefinite programming.
\newblock \emph{Journal of the ACM (JACM)}, 42\penalty0 (6):\penalty0
  1115--1145, 1995.

\bibitem[Graham et~al.(2022)Graham, Hu, Im, Li, and
  Wolkowicz]{graham2022restricted}
N.~Graham, H.~Hu, J.~Im, X.~Li, and H.~Wolkowicz.
\newblock A restricted dual {P}eaceman-{R}achford splitting method for a
  strengthened {DNN} relaxation for {QAP}.
\newblock \emph{INFORMS Journal on Computing}, 2022.

\bibitem[Halperin and Zwick(2001)]{halperin2001approximation}
E.~Halperin and U.~Zwick.
\newblock Approximation algorithms for {MAX} 4-{SAT} and rounding procedures
  for semidefinite programs.
\newblock \emph{Journal of Algorithms}, 40\penalty0 (2):\penalty0 184--211,
  2001.

\bibitem[H{\aa}stad(2001)]{haastad2001some}
J.~H{\aa}stad.
\newblock Some optimal inapproximability results.
\newblock \emph{Journal of the ACM (JACM)}, 48\penalty0 (4):\penalty0 798--859,
  2001.

\bibitem[He et~al.(2016)He, Ma, and Yuan]{he2016convergence}
B.~He, F.~Ma, and X.~Yuan.
\newblock Convergence study on the symmetric version of {ADMM} with larger step
  sizes.
\newblock \emph{SIAM Journal on Imaging Sciences}, 9\penalty0 (3):\penalty0
  1467--1501, 2016.

\bibitem[Henrion and Malick(2011)]{henrion2011projection}
D.~Henrion and J.~Malick.
\newblock Projection methods for conic feasibility problems: applications to
  polynomial sum-of-squares decompositions.
\newblock \emph{Optimization Methods \& Software}, 26\penalty0 (1):\penalty0
  23--46, 2011.

\bibitem[Henrion et~al.(2009)Henrion, Lasserre, and
  L{\"o}fberg]{henrion2009gloptipoly}
D.~Henrion, J.~B. Lasserre, and J.~L{\"o}fberg.
\newblock {G}lopti{P}oly 3: moments, optimization and semidefinite programming.
\newblock \emph{Optimization Methods \& Software}, 24\penalty0 (4-5):\penalty0
  761--779, 2009.

\bibitem[Jagt et~al.(2022)Jagt, Shivakumar, Seiler, and
  Peet]{jagt2022efficient}
D.~Jagt, S.~Shivakumar, P.~Seiler, and M.~Peet.
\newblock Efficient data structures for exploiting sparsity and structure in
  representation of polynomial optimization problems: Implementation in
  {SOSTOOLS}.
\newblock \emph{arXiv preprint arXiv:2203.01910}, 2022.

\bibitem[Karloff and Zwick(1997)]{karloff19977}
H.~Karloff and U.~Zwick.
\newblock A 7/8-approximation algorithm for {MAX} 3{SAT}?
\newblock In \emph{Proceedings 38th Annual Symposium on Foundations of Computer
  Science}, pages 406--415. IEEE, 1997.

\bibitem[Kautz and Selman(1999)]{kautz1999unifying}
H.~Kautz and B.~Selman.
\newblock Unifying {SAT}-based and graph-based planning.
\newblock In \emph{IJCAI}, volume~99, pages 318--325, 1999.

\bibitem[Knyazev(2001)]{knyazev2001toward}
A.~V. Knyazev.
\newblock Toward the optimal preconditioned eigensolver: Locally optimal block
  preconditioned conjugate gradient method.
\newblock \emph{SIAM journal on scientific computing}, 23\penalty0
  (2):\penalty0 517--541, 2001.

\bibitem[Kuegel(2012)]{kuegel2012improved}
A.~Kuegel.
\newblock Improved exact solver for the weighted {MAX}-{SAT} problem.
\newblock \emph{EPiC Series in Computing}, 8:\penalty0 15--27, 2012.

\bibitem[Lasserre(2001)]{lasserre2001global}
J.~B. Lasserre.
\newblock Global optimization with polynomials and the problem of moments.
\newblock \emph{SIAM Journal on optimization}, 11\penalty0 (3):\penalty0
  796--817, 2001.

\bibitem[Lasserre(2007)]{lasserre2007sum}
J.~B. Lasserre.
\newblock A sum of squares approximation of nonnegative polynomials.
\newblock \emph{SIAM review}, 49\penalty0 (4):\penalty0 651--669, 2007.

\bibitem[Laurent(2009)]{laurent2009sums}
M.~Laurent.
\newblock Sums of squares, moment matrices and optimization over polynomials.
\newblock In \emph{Emerging applications of algebraic geometry}, pages
  157--270. Springer, 2009.

\bibitem[Lewin et~al.(2002)Lewin, Livnat, and Zwick]{lewin2002improved}
M.~Lewin, D.~Livnat, and U.~Zwick.
\newblock Improved rounding techniques for the {MAX} 2-{SAT} and {MAX}
  {DI}-{CUT} problems.
\newblock In \emph{International Conference on Integer Programming and
  Combinatorial Optimization}, pages 67--82. Springer, 2002.

\bibitem[Li and Manya(2021)]{li2021maxsat}
C.~M. Li and F.~Manya.
\newblock Max{SAT}, hard and soft constraints.
\newblock In \emph{Handbook of satisfiability}, volume 336 of \emph{Frontiers
  in Artificial Intelligence and Applications, \emph{Editors, Armin Biere,
  Marijn Heule, Hans van Maaren, Toby Walsh}}, pages 903--927. IOS Press, 2021.

\bibitem[Luo et~al.(2015)Luo, Cai, Wu, Jie, and Su]{CCLS}
C.~Luo, S.~Cai, W.~Wu, Z.~Jie, and K.~Su.
\newblock {CCLS}: {A}n efficient local search algorithm for weighted maximum
  satisfiability.
\newblock \emph{IEEE Transactions on Computers}, 64\penalty0 (7):\penalty0
  1830--1843, 2015.

\bibitem[Marques-Silva and Sakallah(2000)]{marques2000boolean}
J.~P. Marques-Silva and K.~A. Sakallah.
\newblock Boolean satisfiability in electronic design automation.
\newblock In \emph{Proceedings of the 37th Annual Design Automation
  Conference}, pages 675--680, 2000.

\bibitem[Mendonca et~al.(2009)Mendonca, W\k{a}sowski, and
  Czarnecki]{mendonca2009sat}
M.~Mendonca, A.~W\k{a}sowski, and K.~Czarnecki.
\newblock {SAT}-based analysis of feature models is easy.
\newblock In \emph{Proceedings of the 13th International Software Product Line
  Conference}, pages 231--240, 2009.

\bibitem[{{MOSEK} ApS}(2019)]{mosek}
{{MOSEK} ApS}.
\newblock \emph{The {MOSEK} optimization toolbox for {MATLAB} manual. Version
  9.0.}, 2019.
\newblock URL \url{http://docs.mosek.com/9.0/toolbox/index.html}.

\bibitem[Oliveira et~al.(2018)Oliveira, Wolkowicz, and Xu]{oliveira2018admm}
D.~E. Oliveira, H.~Wolkowicz, and Y.~Xu.
\newblock {ADMM} for the {SDP} relaxation of the {QAP}.
\newblock \emph{Mathematical Programming Computation}, 10\penalty0
  (4):\penalty0 631--658, 2018.

\bibitem[Papachristodoulou et~al.(2021)Papachristodoulou, Anderson, Valmorbida,
  Prajna, Seiler, Parillo, Peet, and Jagt]{sostools}
A.~Papachristodoulou, J.~Anderson, G.~Valmorbida, S.~Prajna, P.~Seiler, P.~A.
  Parillo, M.~M. Peet, and D.~Jagt.
\newblock \emph{{SOSTOOLS}: Sum of squares optimization toolbox for {MATLAB}}.
\newblock \texttt{http://arxiv.org/abs/1310.4716}, 2021.
\newblock Available from \texttt{https://github.com/oxfordcontrol/SOSTOOLS}.

\bibitem[Parrilo and Thomas(2020)]{ParilloThomas}
P.~A. Parrilo and R.~R. Thomas, editors.
\newblock \emph{Sum of Squares: Theory and Applications}, volume~77 of
  \emph{Proceedings of Symposia in Applied Mathematics}.
\newblock American Mathematical Society, Providence, Rhode Island, 2020.

\bibitem[Peaceman and Rachford(1955)]{peaceman1955numerical}
D.~W. Peaceman and H.~H. Rachford, Jr.
\newblock The numerical solution of parabolic and elliptic differential
  equations.
\newblock \emph{Journal of the Society for industrial and Applied Mathematics},
  3\penalty0 (1):\penalty0 28--41, 1955.

\bibitem[Permenter and Parrilo(2014)]{permenter2014basis}
F.~Permenter and P.~A. Parrilo.
\newblock Basis selection for {SOS} programs via facial reduction and
  polyhedral approximations.
\newblock In \emph{53rd IEEE Conference on Decision and Control}, pages
  6615--6620. IEEE, 2014.

\bibitem[Prasad et~al.(2005)Prasad, Biere, and Gupta]{prasad2005survey}
M.~R. Prasad, A.~Biere, and A.~Gupta.
\newblock A survey of recent advances in {SAT}-based formal verification.
\newblock \emph{International Journal on Software Tools for Technology
  Transfer}, 7\penalty0 (2):\penalty0 156--173, 2005.

\bibitem[Putinar(1993)]{putinar1993positive}
M.~Putinar.
\newblock Positive polynomials on compact semi-algebraic sets.
\newblock \emph{Indiana University Mathematics Journal}, 42\penalty0
  (3):\penalty0 969--984, 1993.

\bibitem[Reznick(1978)]{reznick1978extremal}
B.~Reznick.
\newblock Extremal {PSD} forms with few terms.
\newblock \emph{Duke mathematical journal}, 45\penalty0 (2):\penalty0 363--374,
  1978.

\bibitem[Rontsis et~al.(2022)Rontsis, Goulart, and
  Nakatsukasa]{rontsis2022efficient}
N.~Rontsis, P.~Goulart, and Y.~Nakatsukasa.
\newblock Efficient semidefinite programming with approximate {ADMM}.
\newblock \emph{Journal of Optimization Theory and Applications}, 192\penalty0
  (1):\penalty0 292--320, 2022.

\bibitem[Scheiderer(2009)]{scheiderer2009positivity}
C.~Scheiderer.
\newblock Positivity and sums of squares: a guide to recent results.
\newblock In \emph{Emerging applications of algebraic geometry}, pages
  271--324. Springer, 2009.

\bibitem[Sturmfels(1998)]{sturmfels1998polynomial}
B.~Sturmfels.
\newblock Polynomial equations and convex polytopes.
\newblock \emph{The American mathematical monthly}, 105\penalty0 (10):\penalty0
  907--922, 1998.

\bibitem[van Maaren and van Norden(2005)]{maaren2005sums}
H.~van Maaren and L.~van Norden.
\newblock Sums of squares, satisfiability and maximum satisfiability.
\newblock In \emph{International Conference on Theory and Applications of
  Satisfiability Testing}, pages 294--308. Springer, 2005.

\bibitem[van Maaren et~al.(2008)van Maaren, van Norden, and Heule]{van2008sums}
H.~van Maaren, L.~van Norden, and M.~J. Heule.
\newblock Sums of squares based approximation algorithms for {MAX}-{SAT}.
\newblock \emph{Discrete Applied Mathematics}, 156\penalty0 (10):\penalty0
  1754--1779, 2008.

\bibitem[Wang and Zico~Kolter(2019)]{wang2019low}
P.-W. Wang and J.~Zico~Kolter.
\newblock Low-rank semidefinite programming for the {MAX2SAT} problem.
\newblock In \emph{Proceedings of the AAAI Conference on Artificial
  Intelligence}, volume~33, pages 1641--1649, 2019.

\bibitem[Zheng et~al.(2017)Zheng, Fantuzzi, and
  Papachristodoulou]{zheng2017exploiting}
Y.~Zheng, G.~Fantuzzi, and A.~Papachristodoulou.
\newblock Exploiting sparsity in the coefficient matching conditions in
  sum-of-squares programming using {ADMM}.
\newblock \emph{IEEE control systems letters}, 1\penalty0 (1):\penalty0 80--85,
  2017.

\end{thebibliography}

\appendix
\section{PRSM implementation detail}
\label{subsection_compactPRSM}
For the PRSM scheme \eqref{eqn_PRSMiterates}, one only requires access to the matrix $\frac{1}{\beta}S^k$, instead of $S^k$. We therefore propose the following simple adaption to the PRSM scheme, which maintains the loop invariant $Q^k = \frac{1}{\beta}S^k$.
\begin{equation}
\label{eqn_compactPRSM}
\left\lbrace\,
\begin{array}{@{}r@{\quad}l@{}l@{}}
&Z^{k+1} &=  \mathcal{P}_{\mathcal{S}_+} \left (M^k + Q^k \right ),\\
    &Q^{k+1/2} \, &= Q^k + \gamma_1 (M^k - Z^{k+1}),\\
    &M^{k+1} &= \mathcal{P}_{\mathcal{M}_\Logical} \left ( Z^{k+1} - \frac{1}{\beta} I -  Q^{k+1/2}  \right ), \\
    &Q^{k+1} &= Q^{k+1/2} + \gamma_2 (M^{k+1} - Z^{k+1}).
\end{array}
\right.
\end{equation}
Compared to \eqref{eqn_PRSMiterates}, the above scheme does not require the computation of $\frac{1}{\beta}S^k$ twice per iteration.

\section{Runtimes per MAX-3-SAT instance}
\label{appendix_runtimesPerInstance}
We provide the running times of \SOSms{} and the best performing MAX-SAT algorithms from MSE-2016 per tested  instance. \Cref{table_70varM3SAT} reports running times per tested 70 variable MAX-3-SAT instance, corresponding to \Cref{picture_cactusPlotS3V70}. Column $m$ indicates the number of clauses. Column \textbf{Inst.}~reports the name of the instance (e.g., $m = 900$ and \textbf{Inst.}$=3$ corresponds to \texttt{s3v70c900-3.cnf}). The original runtimes of \SOSms{} are here multiplied by a factor 1.4, to account for the different machine compared to those in MSE-2016. A `-'\ value in \Cref{table_70varM3SAT} indicates a time-out (of 30 minutes for MSE-2016 and 30/1.4 minutes for \SOSms{}).
The instances \texttt{s3v70c1500-1.cnf}, \texttt{s3v70c1500-4.cnf} and \texttt{s3v70c1500-5.cnf} remained unsolved in the MSE-2016. Using \SOSms{}, we compute that their optimal values are 1410, 1409 and 1406, respectively.

\Cref{table_80varM3SAT_instances} reports running times per tested 80 variable MAX-3-SAT instance, corresponding to \Cref{picture_cactusPlotS3V80}. Here, column $m$ indicates the number of clauses per instance, \textbf{Inst.}~column reports the name of the instance  (e.g., $m = 1000$ and \textbf{Inst.}$=4$ corresponds to \texttt{s3v80c1000-4.cnf}). Columns \textbf{\SOSms{}} and \textbf{CCLS2akms} provide the running times of each solver, in seconds. Here we provide original computational times, thus not scaled ones.

\begin{table}
\parbox{.5\linewidth}{\begin{tabular}{llll}
\hline
\multicolumn{1}{l}{} &  & \multicolumn{2}{c}{\textbf{Running time   (s)}} \\ \cline{3-4} 
$m$                 & \textbf{Inst.} & \textbf{\SOSms{}} & \textbf{MSE } \\ \hline
\multirow{5}{*}{700}  & 1                 & 179.74                   & 5.00                       \\
                    & 2                 & 145.49                   & 7.06                       \\
                    & 3                 & 298.63                   & 14.14                      \\
                    & 4                 & 381.82                   & 13.83                      \\
                    & 5                 & 145.19                   & 4.61                       \\ \hline
\multirow{5}{*}{800}  & 1                 & 281.03                   & 27.20                      \\
                    & 2                 & 378.84                   & 81.33                      \\
                    & 3                 & 125.14                   & 16.76                      \\
                    & 4                 & 129.03                   & 15.14                      \\
                    & 5                 & 118.23                   & 26.16                      \\ \hline
\multirow{5}{*}{900}  & 1                 & 280.31                   & 67.52                      \\
                    & 2                 & 235.61                   & 59.97                      \\
                    & 3                 & 132.43                   & 53.09                      \\
                    & 4                 & 219.21                   & 54.46                      \\
                    & 5                 & 321.44                   & 87.16                      \\ \hline
\multirow{5}{*}{1000} & 1                 & 312.57                   & 123.08                     \\
                    & 2                 & 193.14                   & 56.66                      \\
                    & 3                 & 122.33                   & 78.94                      \\
                    & 4                 & 100.70                   & 122.22                     \\
                    & 5                 & 143.14                   & 47.84                      \\ \hline
\multirow{5}{*}{1100} & 1                 & 531.20                   & 331.47                     \\
                    & 2                 & 244.50                   & 227.34                     \\
                    & 3                 & 122.36                   & 136.06                     \\
                    & 4                 & 230.81                   & 169.47                     \\
                    & 5                 & 176.46                   & 184.75                     \\ \hline
\multirow{5}{*}{1200} & 1                 & 756.47                   & 752.93                     \\
                    & 2                 & 161.94                   & 322.68                     \\
                    & 3                 & 645.56                   & 608.15                     \\
                    & 4                 & 739.14                   & 1011.74                    \\
                    & 5                 & 473.66                   & 544.98                     \\ \hline
\multirow{5}{*}{1300} & 1                 & 520.31                   & 1139.98                    \\
                    & 2                 & 541.27                   & 946.71                     \\
                    & 3                 & 367.58                   & 708.07                     \\
                    & 4                 & 153.01                   & 316.19                     \\
                    & 5                 & 278.93                   & 650.70                     \\ \hline
\multirow{5}{*}{1400} & 1                 & 129.06                   & 705.64                     \\
                    & 2                 & 854.36                   & 1309.01                    \\
                    & 3                 & 249.01                   & 990.89                     \\
                    & 4                 & 184.05                   & 284.52                     \\
                    & 5                 & 299.69                   & 1062.63                    \\ \hline
\multirow{5}{*}{1500} & 1                 & 466.85                   & -                          \\
                    & 2                 & 270.72                   & 1182.99                    \\
                    & 3                 & 191.68                   & 893.82                     \\
                    & 4                 & 642.74                   & -                          \\
                    & 5                 & 433.93                   & -                          \\ \hline
\end{tabular}
\caption{70 variable MAX-3-SAT instances}
\label{table_70varM3SAT}
}
\quad
\parbox{.45\linewidth}{
\begin{tabular}{llll}
\hline
\multicolumn{1}{l}{} &  & \multicolumn{2}{c}{\textbf{Running time   (s)}} \\ \cline{3-4} 
$m$                         & \textbf{Inst.} & \textbf{\SOSms{}} & \textbf{CCLS2akms} \\ \hline
\multirow{6}{*}{700} & 1              & 441.66              & 6.98                   \\
                            & 2              & 638.47              & 13.29                  \\
                            & 3              & 489.35              & 4.95                   \\
                            & 4              & 144.89              & 1.52                   \\
                            & 5              & 799.49              & 18.21                  \\
                            & 6              & 402.94              & 5.16                   \\ \hline
\multirow{6}{*}{800}          & 1              & 923.21              & 44.68                  \\
                            & 2              & -                   & 197.40                 \\
                            & 3              & 1174.15             & 43.04                  \\
                            & 4              & 557.18              & 25.93                  \\
                            & 5              & 388.10              & 35.73                  \\
                            & 6              & 1123.17             & 39.26                  \\ \hline
\multirow{6}{*}{900}          & 1              & 585.08              & 81.99                  \\
                            & 2              & -                   & 181.21                 \\
                            & 3              & 730.48              & 99.57                  \\
                            & 4              & 305.24              & 67.33                  \\
                            & 5              & 423.89              & 67.35                  \\
                            & 6              & 298.67              & 35.94                  \\ \hline
\multirow{6}{*}{1000}         & 1              & 1316.13             & 355.26                 \\
                            & 2              & 1307.67             & 273.86                 \\
                            & 3              & 327.52              & 93.09                  \\
                            & 4              & 1559.63             & 483.51                 \\
                            & 5              & 405.84              & 151.73                 \\
                            & 6              & 366.26              & 168.22                 \\ \hline
\multirow{6}{*}{1100}         & 1              & 1624.88             & 937.81                 \\
                            & 2              & -                   & 1393.08                \\
                            & 3              & 279.54              & 344.01                 \\
                            & 4              & 1716.55             & 806.55                 \\
                            & 5              & 587.51              & 264.34                 \\
                            & 6              & 1660.14             & 425.80                 \\ \hline
\multirow{6}{*}{1200}         & 1              & 542.94              & 790.99                 \\
                            & 2              & 201.26              & 333.18                 \\
                            & 3              & 592.83              & 522.42                 \\
                            & 4              & 641.53              & 486.97                 \\
                            & 5              & 1030.56             & 868.28                 \\
                            & 6              & 1729.55             & 1131.22                \\ \hline
\multirow{6}{*}{1300}         & 1              & 330.12              & 668.63                 \\
                            & 2              & 614.47              & 1314.20                \\
                            & 3              & -                   & -                      \\
                            & 4              & 428.29              & 992.42                 \\
                            & 5              & -                   & -                      \\
                            & 6              & 1665.65             & -                      \\ \hline
\multirow{6}{*}{1400}         & 1              & 443.53              & -                      \\
                            & 2              & 872.60              & -                      \\
                            & 3              & 695.00              & -                      \\
                            & 4              & 646.95              & 1044.26                \\
                            & 5              & 987.23              & -                      \\
                            & 6              & 1433.79             & -           \\
                            \hline
\end{tabular} 
\caption{80 variable MAX-3-SAT instances}
\label{table_80varM3SAT_instances}
}
\end{table}

\section{Search tree for the partial MAX-2-SAT}
\label{subsection_searchTreeWpM2S}
We provide the search trees of
our SOS-SDP based algorithm for solving various partial MAX-2-SAT instances, as described in \Cref{subsection_wpM2S}. These instances are also reported in \Cref{table_pM2S_runTimes,table_wpM2S_runTimes}. 

For the search trees in \Cref{fig_searchTree1,fig_searchTree2,fig_searchTree3},  each node is given a numeric value between zero and one, or the value \textbf{B}. Numeric values indicate the largest value of $\theta$ for which basis $SOS_s^{\theta}$ was used to compute an upper bound in that node. The value \textbf{B} (\textbf{B} for branch) indicates that no upper bound was computed in this node, but instead immediately a variable was chosen to branch.

The figures show the strength of the SDP bounds, implying that many nodes can be pruned immediately. This also demonstrates the effectiveness of the branching rule, which is able to find many nodes that can be pruned immediately.

\section{Branching process for the partial MAX-2-SAT}
\label{appendix_branchingRulesWPM2S}
During the B\&B search for the optimal solution to the partial MAX-2-SAT, we do not compute SOS-SDP based upper bounds at each node. We describe here the process which decides in which nodes the algorithm computes an upper bound.

Recall that our branching rule, described \Cref{subsection_wpM2S}, selects the variable $i \in [n]$ for which either $\texttt{unitRes}(\Logical, i)$ or $\texttt{unitRes}(\Logical, -i)$ contains many hard unit clauses, and therefore many truth assignments. Each branching step creates two child nodes. We refer to the node which corresponds to the proposition with most truth assignments in the two child nodes as a \textit{bad} node. The forced assignments resulting from the hard unit clauses in that proposition are often sub optimal, which explains the name.

The algorithm for the B\&B search operates in two phases, named phase I and phase II. In phase I, we only compute upper bounds for bad nodes. We exit phase I when the algorithm fails to prune a bad node, or when the number of remaining variables is smaller than some fixed value $n_\text{min}$. This process is given in pseudocode in \Cref{alg_wM2S_branchingPhase1}.

After exiting phase I, the algorithm enter phase II, see \Cref{alg_wM2S_branchingPhase2}. In phase II, before we compute an upper bound in a node, we first attempt to remove variables from the proposition,
by pruning bad nodes. This is described in \Crefrange{alg_wM2Sphase2_multi1}{alg_wM2Sphase2_multi5}. The main difference with phase I is that, when we fail to prune a bad node in these lines, we do not recompute a stronger upper bound with a larger monomial basis. The extra effort in phase I is justified since pruning a node in phase I equates to removing one unassigned variable from the rest of the search tree.

After \Crefrange{alg_wM2Sphase2_multi1}{alg_wM2Sphase2_multi5}, we consider the remaining proposition $\Logical$, and compute the basis $SOS_s^1$. If this basis is too large, we branch immediately. Otherwise, we compute an upper bound. We set the parameters as $\theta_{\text{start}} = 0$ in phase I, $\theta_{\text{start}} = 0.1$ in phase II, and $(b_{\text{max}}, \, n_{\text{min}}, \, s_{\text{max}}) = (15,70,1750)$.

\begin{algorithm}[ht!]
\caption{B\&B search for the (weighted) partial MAX-2-SAT, phase I}\label{alg_wM2S_branchingPhase1}
\textbf{Input: }Lower bound \texttt{LB}, proposition $\Logical$, parameters $(\theta_{\text{start}}, n_{\text{min}}) \in [0,0.5) \times \mathbb{N}$.
\\
Set $\theta = \theta_{\text{start}}$. \\
\While{ $n_\Logical \geq n_{\text{min}}$}{
Determine the truth assignment $\sigma$ according to the branching rule from \Cref{subsection_wpM2S}. \\
Compute $\Logical^\prime = \texttt{unitRes}(\Logical, \sigma)$. \\
{\color{blue} \tcc{Proposition $\Logical^\prime$ corresponds to a bad node.}}
Solve $\texttt{P}_{\Logical^\prime}$, using basis $SOS^{\theta}_s$, to obtain $\texttt{UB}$. \\
\uIf{$\lfloor \texttt{UB} \rfloor \leq \texttt{LB}$}{
Update $\Logical := \texttt{unitRes}(\Logical, -\sigma)$, and reset $\theta$ by $\theta := \theta_{\text{start}}$. \\
{\color{blue} \tcc{Note that we did not compute an upper bound for the old  $\Logical$.} }
}
\Else{
    \uIf{$\theta = \theta_{\text{start}}$}{Set $\theta = 0.5$ if $\texttt{UB} - \texttt{LB} < 10$, set $\theta = 1$ otherwise. \\
    {\color{blue} \tcc{Recompute a stronger upper bound with a larger $\theta$.}}
    }
    \Else{
    {\color{blue} \tcc{Stronger upper bound was unable to prune the node.}}
    Compute $\Logical^{\prime \prime} = \texttt{unitRes}(\Logical, -\sigma)$. \\
    Add two nodes corresponding to $\Logical^\prime$ and $\Logical^{\prime \prime}$ to the search tree. \\
    \textbf{break} \\
    {\color{blue} \tcc{Move to phase II of the algorithm (see \Cref{alg_wM2S_branchingPhase2}).}}
    }
}
}
\end{algorithm}

\begin{algorithm}[ht!]
\caption{B\&B search for the (weighted) partial MAX-2-SAT, phase II}\label{alg_wM2S_branchingPhase2}
\textbf{Input: }Lower bound \texttt{LB}, parameters $(\theta_{\text{start}},b_{\text{max}}, \, n_{\text{min}}, \, s_{\text{max}}) \in [0,1) \times \mathbb{N} \times \mathbb{N} \times \mathbb{N}$.
\\
\While{The search tree contains a node which is neither branched nor pruned}{
Consider an unbranched and unpruned node in the search tree, with proposition $\Logical$. \\
Set $b = 0$. \\
\While{$b < b_{\text{max}}$ \& $n_{\Logical} > n_{\text{min}}$ \label{line_multipleBranch}}{
Determine the truth assignment $\sigma$ according to the branching rule from \Cref{subsection_wpM2S}. \label{alg_wM2Sphase2_multi1}\\
Compute $\Logical^\prime = \texttt{unitRes}(\Logical, \sigma)$. \\
{\color{blue} \tcc{Proposition $\Logical^\prime$ corresponds to a bad node.}}
Solve $\texttt{P}_{\Logical^\prime}$, using basis $SOS^{\theta_{\text{start}}}_s$, to obtain $\texttt{UB}$. \\
\uIf{$\lfloor \texttt{UB} \rfloor \leq \texttt{LB}$}{
Update $\Logical := \texttt{unitRes}(\Logical, -\sigma)$, and $b := b+1$. \\
{\color{blue} \tcc{Note that we did not compute an upper bound for the old  $\Logical$.} }
}
\Else{ $\textbf{break}$ \label{alg_wM2Sphase2_multi5}} 
}
Compute monomial basis $SOS_s^1$ for $\Logical$. \\
\uIf{$|SOS_s^1| > s_{\text{max}}$}{
Branch the node corresponding to $\Logical$, add its two children to the search tree and continue with B\&B search. \label{algLine_branchNode_wpM2S} \\
{\color{blue} \tcc{For efficiency reasons, compute upper bounds only when the basis is small enough.} }
}
\Else{Solve \ref{eqn_SumSquaresExplicit}, using basis $SOS^{1}_s$, to obtain \texttt{UB}. \\
\uIf{$\lfloor \texttt{UB} \rfloor \leq \texttt{LB}$}{Prune the node corresponding to $\Logical$, and continue with the B\&B search.}
\Else{Perform \Cref{algLine_branchNode_wpM2S}.}
}
}
\end{algorithm}

\clearpage
\newgeometry{left=0.4cm,bottom=0.1cm,top=0.1cm,right=0.4cm}
\begin{sidewaysfigure}
    \centering
    \includegraphics[width=840pt] {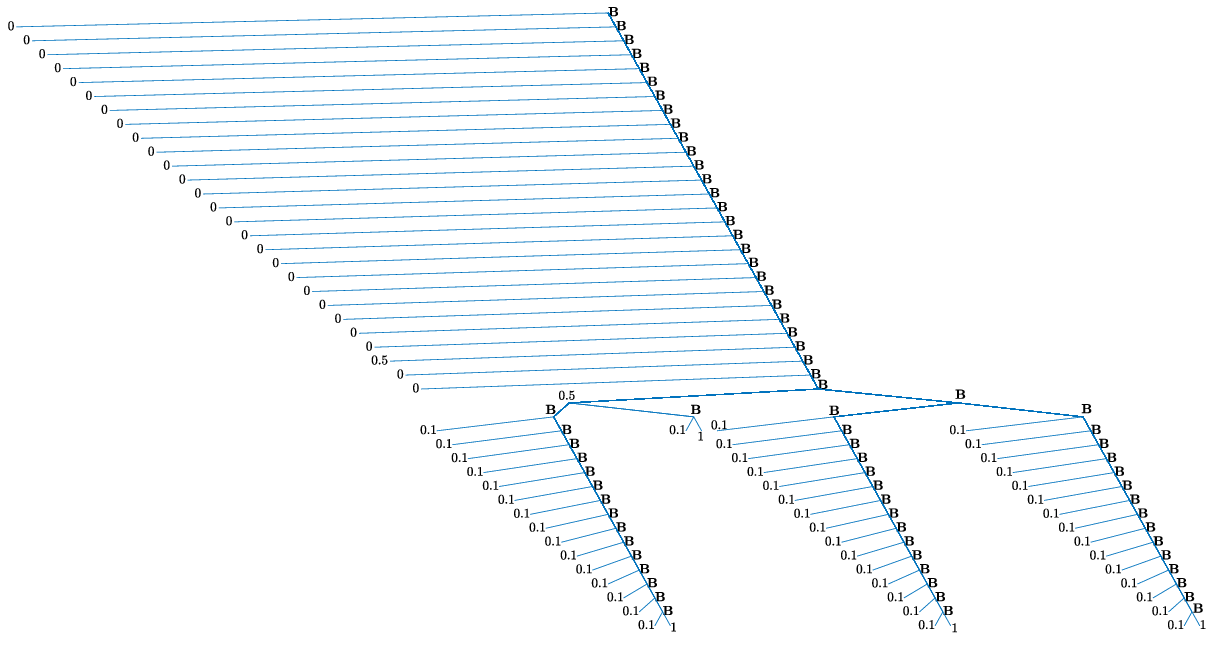}
    \caption{Search tree for \texttt{file\_rpms\_wcnf\_L2\_V150\_C2500\_H150\_4.wcnf}}
    \label{fig_searchTree1}
\end{sidewaysfigure}   

\begin{sidewaysfigure}
    \centering
    \includegraphics[width=830pt]{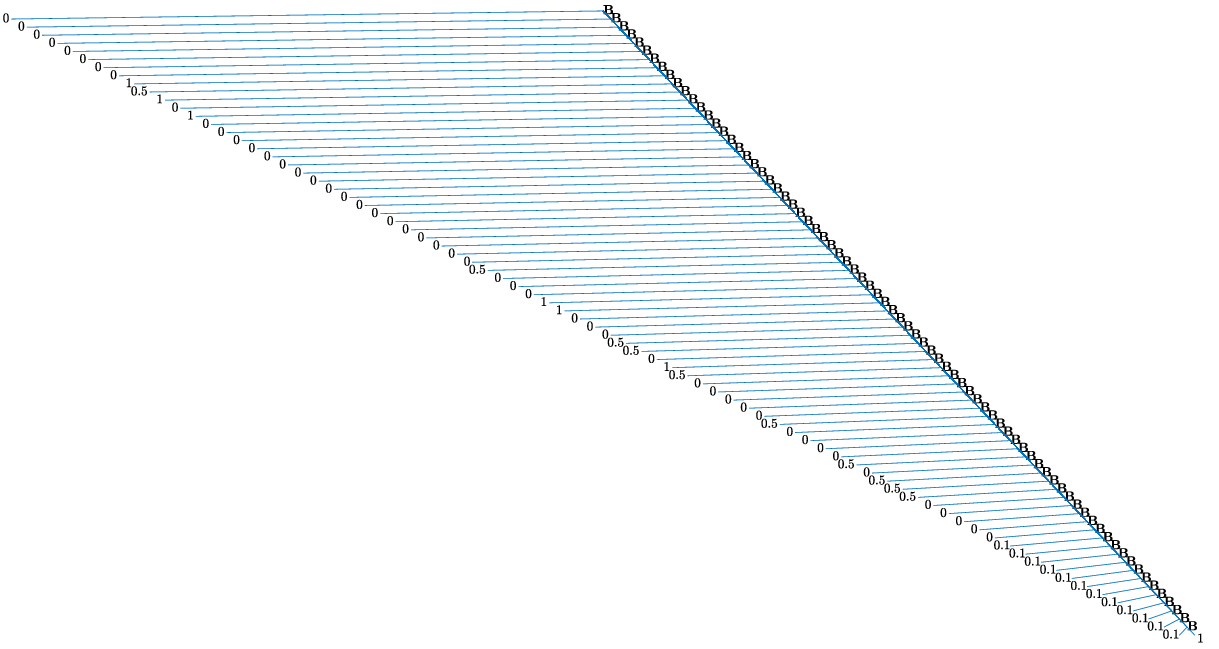}
    \caption{Search tree for \texttt{file\_rwpms\_wcnf\_L2\_V150\_C1000\_H150\_1.wcnf}}
    \label{fig_searchTree2}
\end{sidewaysfigure}

\begin{sidewaysfigure}
    \centering
    \includegraphics[width=830pt]{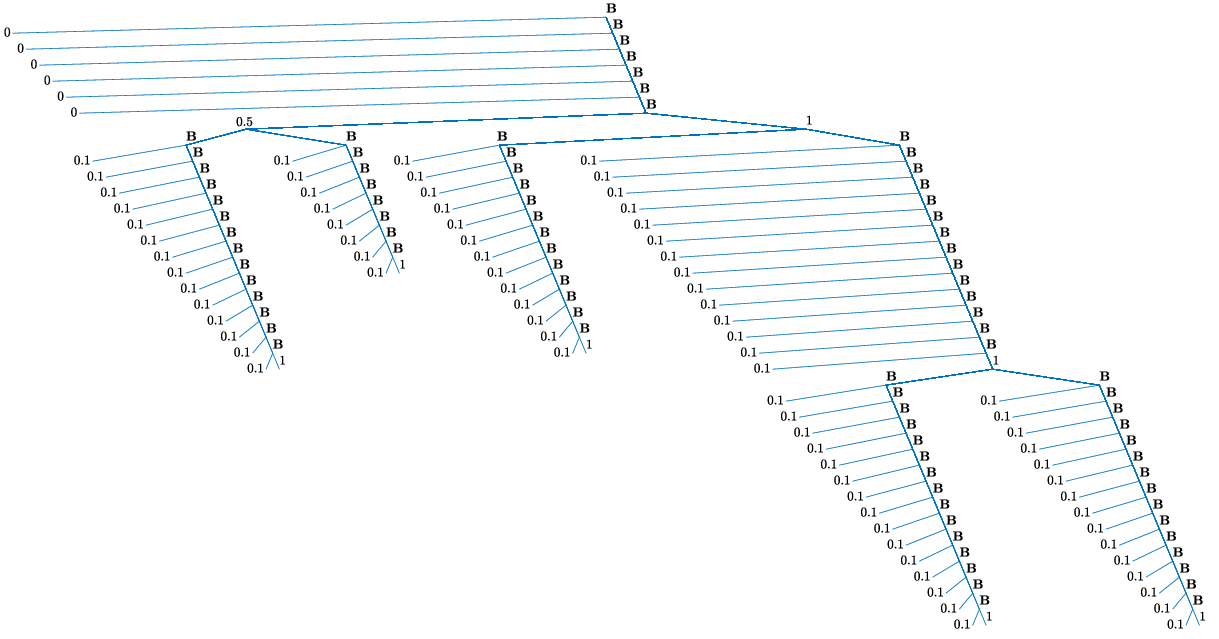}
    \caption{Search tree for \texttt{file\_rpms\_wcnf\_L2\_V150\_C4000\_H150\_3.wcnf}}
    \label{fig_searchTree3}
\end{sidewaysfigure}

\restoregeometry

\end{document}